\documentclass[12pt]{amsart}
\usepackage{amsmath,amsfonts,amsthm,amssymb,stmaryrd,comment}
\usepackage{xcolor}

\usepackage{array} 
\usepackage{url}
\newcolumntype{C}[1]{>{\centering\arraybackslash}m{#1}}

\definecolor{oblue}{RGB}{0,80,160}

\usepackage[
colorlinks=true,
linkcolor=oblue,
citecolor=oblue,
urlcolor=oblue
]{hyperref}
\usepackage{geometry}
\usepackage{etoolbox}
\patchcmd{\section}{\scshape}{\bfseries}{}{}
\makeatletter
\renewcommand{\@secnumfont}{\bfseries}
\makeatother

\renewcommand{\leq}{\leqslant}
\renewcommand{\geq}{\geqslant}

\newcommand*{\bbe}{
	\mathop{
		\mathchoice{\vcenter{\hbox{\larger[4]$\mathbb{E}$}}}
		{\kern0pt\mathbb{E}}
		{\kern0pt\mathbb{E}}
		{\kern0pt\mathbb{E}}
	}\displaylimits
}

\newcommand{\Z}{\mathbb{Z}}

\newcommand{\codim}{\operatorname{codim}}

\DeclareMathOperator{\ev}{ev}
\DeclareMathOperator{\Ker}{Ker}
\DeclareMathOperator{\Supp}{Supp}
\DeclareMathOperator{\Gen}{Gen}

\newtheorem{theorem}{Theorem}[section]
\newtheorem{lemma}[theorem]{Lemma}

\newtheorem{corollary}[theorem]{Corollary}

\newtheorem{proposition}[theorem]{Proposition}

\newtheorem{theoremx}{Theorem}
 
\newtheorem{conjecturex}[theoremx]{Conjecture}

\theoremstyle{definition}
\newtheorem{definition}[theorem]{Definition}
\newtheorem{example}[theorem]{Example}
\newtheorem{remark}[theorem]{Remark}
\newtheorem{remarks}[theorem]{Remarks}

\newcommand{\divi}[1]{\langle #1 \rangle}

\usepackage{caption}
\usepackage{subcaption}
\usepackage{tikz}
\usepackage[shortlabels]{enumitem}

\usepackage[backend=bibtex,style=numeric, maxbibnames=99]{biblatex}
\title
{High dimensional Riemann--Roch spaces in linear spaces with small squares}

\author{Nihan Tan\i sal\i~} 
\author{Mieke Wessel}
\address{
	Centre de recherche INRIA Saclay,
	\'Ecole Polytechnique, Institut Polytechnique de 
	Paris, 1 rue Honor\'e d’Estienne d’Orves, 91120 Palaiseau Cedex
	France}
\email{nihan.tanisali@inria.fr}
\email{mieke.wessel@inria.fr}
\date{}
\begin{document}
	
	\begin{abstract}
		Let $F$ be a function field over an algebraically closed field $K$ and $S$ a finite dimensional $K$-subspace of $F$. The square of $S$ is spanned by all products of pairs of elements in $S$. We conjecture that if $\dim S^2 \leq 3 \dim S - 4$, then $S^2$ must contain a Riemann--Roch space of dimension at least $2 \dim S - 1 + g$, where $g$ is the genus of $F$. This generalizes a theorem of Freiman from additive combinatorics, stating that small sumsets must contain long arithmetic progressions. We prove our conjecture in the case that $S$ is contained in a Riemann--Roch space of dimension at most $3/2\dim S + 1$. For the proof we study the annihilator of $S$ and introduce the notion of weight for linear forms. 
	\end{abstract}
	
	\maketitle
	\section{Introduction}
	Let $K$ be a field and $F/K$  be a field extension. For finite dimensional  $K$-subspaces $U,V \subseteq F,$ we define their product by 
	$$U V := \langle uv  \mid u \in U ,v \in V  \rangle. $$
	In particular when $U=V=S$, we write $S^2$ for the square of $S$, defined by  
	$ S^2 := \langle s_1s_2 \mid s_1, s_2 \in S \rangle.$
	We are interested in structural properties of $S$ and $S^2$ when $\dim S^2$ is small. These types of questions are generalizations of inverse theorems from additive combinatorics, where given finite sets $A$ and $B$ in an abelian group $G$ and their sumset $$A + B = \{ a + b \mid a \in A, b\in B \},$$ they prove structural properties on $A$ and $B$ whenever $|A+B|$ is small. More precisely, motivated by an additive combinatorial result of Freiman \cite{Freiman2009intcase}, we ask whether $S^2$ must contain a large Riemann--Roch space when $F$ is a function field and $K$ is algebraically closed.
	
	To see that the extension field set-up is really a generalization of the additive one, note that given $A \subset G$ and $x \in F \backslash K$ we can define $$S_A := \langle x^a \mid a \in A \rangle,$$ 
	in which case $S_AS_B = S_{A+B}$ and $\dim_K S_A = |A|$. 
	
	The first significant result in the study of multiplicative analogues of addition theorems is due to Hou, Leung and Xiang, who proved an extension field version of Kneser's classical addition theorem under a separability condition on $F$ \cite{HLX02,Kne56}. Bachoc, Serra and Z\'emor later removed this separability assumption, extending the result to arbitrary field extensions \cite{BSZ18}. In the slightly more general setting of skew field extensions Eliahou and Lecouvey generalized several other results from additive combinatorics, including theorems of Kemperman and Olson \cite{eliahou09someadd} and Pl\"unnecke's inequalities \cite{lecouveyplunnecke}. Beck and Lecouvey further developed these methods in the even more general setting of associative unital algebras, obtaining generalizations of sumset results for non-abelian groups \cite{BL17}. 
	
	The study of product spaces is also motivated by coding theory, for if $K$ is a finite field and $F \cong K^n$, the $K$-subspaces $U$ and $V$ are isomorphic to linear codes and their product can be interpreted as a component wise product. Products of codes are used for several applications in cryptography, such as linear secret sharing schemes and encryption schemes. Randriambololona studied products and higher powers of linear codes with a focus on their dimensions and minimum distances \cite{Ran15}. Mirandola and Z\'emor specifically analyzed the structural properties of pairs of codes with product of small dimension \cite{MZ15}.
	
	Assuming that $K \subseteq U, V$ and that $U$ and $V$ are not both contained in some intermediate field extension $L \subsetneq F$, it holds for general $F$ that $$\dim_K UV\geq \min([F:K], \dim_K U + \dim_K V - 1).$$ 
	The case where $\dim_K UV = \dim_K U + \dim_K V - 1$ and $[F:K] = p$, for some prime $p$ was studied by Bachoc, Serra and Z\'emor leading to a linear analogue of Vosper's Theorem \cite{BSZ17}. In the symmetric case, where $U = V = S$, we can ask how the structure of $S$ changes when 
	$$\dim_K S^2=2\dim_K S-1+\gamma \qquad \text{with} \qquad \gamma \geq 0.$$
	This question was initiated by Bachoc, Couvreur and Z\'emor in \cite{bachoc2016freiman} taking $F$ to be a function field and it is motivated by Freiman's $3k-4$ Theorem \cite{freiman1973foundations}. The cases $\gamma=0, 1$ were completely characterized in \cite{bachoc2016freiman}. For rational function fields $F = K(x)$, the next case, $\gamma=2$, was classified in \cite{Wes24} by the second author. 
	
	The classical $3k-4$ Theorem states that for a finite set $B \subset \Z$ such that
	$$|B+B|=2|B|-1+\gamma_B
	\qquad\text{with}\qquad
	0\leq\gamma_B \leq |B|-3,$$
	it must hold that $B$ is contained in an arithmetic progression of length at most $|B|+\gamma$. Note that the assumption is true for some $\gamma_B$ if and only if $|B+B| \leq 3|B|-4$ (hence $3k-4$). The complete function field analogue of this was proved by Couvreur and Z\'emor \cite{couvreur2024freimanproof}. It implies in particular that the natural generalization of arithmetic progressions are Riemann--Roch spaces. 
	\begin{theorem}\label{thm:3k-4}
		Let $K$ be a perfect field and let $F$ be an extension field of $K$ in which
		$K$ is algebraically closed; that is, any element of $F$ that is algebraic over
		$K$ belongs to $K$. Let $S$ be a finite-dimensional $K$-subspace of $F$ such
		that $K \subseteq S$ and $F = K(S)$. If
		$$ \dim_K S^2 = 2\dim_K S - 1 + \gamma \qquad\text{
			with}\qquad 0 \leq \gamma \leq \dim_K S - 3,$$
		then $F$ has transcendence degree $1$ over $K$, and its genus $g$ satisfies
		$$g \leq \gamma.$$
		Furthermore, there exists a Riemann--Roch space $L(D)$ of $F$ such that
		$S \subseteq L(D)$ and
		$$\dim_K L(D) \leq \dim_K S + \gamma - g. $$
	\end{theorem}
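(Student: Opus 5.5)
The plan is to reduce everything to one lower bound on $\dim S^2$ in terms of the degree of the divisor cut out by $S$, proved by restricting to a hyperplane section. Write $k=\dim_K S$. Note first that $\gamma \leq k-3$ is the same as $\dim S^2 \leq 3k-4$, and the proof will use it only in that form.

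\textbf{Step 1: transcendence degree.} Consider the graded algebra $A=\bigoplus_{n\ge 0} S^n t^n$. Its Hilbert function $n\mapsto \dim S^n$ is eventually a polynomial of degree $r=\operatorname{trdeg}_K F$. If $r\geq 2$, I would take a generic element $s\in S$ and look at $S^2/sS$. This is the degree-$2$ part of a graded algebra of dimension $r\ge 2$ generated in degree one by the $(k-1)$-dimensional space $S/sK$. Using that this quotient algebra is still infinite-dimensional, I would show it satisfies the linear growth bound $\dim S^2-\dim S \geq 2(k-1)-1$. This gives $\dim S^2 \ge 3k-3$, contradicting $\dim S^2\le 3k-4$. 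So $r=1$, and $F$ is a function field of some genus $g$.

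\textbf{Step 2: the divisor.} Let $D=-\min_{s\in S}(s)$ be the smallest divisor with $S\subseteq L(D)$, and set $d=\deg D$. Since $1\in S$ we have $D\geq 0$. The linear system $S$ is base-point free as a subspace of $L(D)$. Because $F=K(S)$, the associated morphism $\varphi: C\to \mathbb{P}^{k-1}$ is birational onto its image and has degree $d$. The key inequality I aim for is a Castelnuovo-type bound:
$$\dim S^2 \geq \dim S + \min(d,\,2k-3).$$
To get it, pick $s\in S$ whose divisor $E=(s)+D$ is reduced (a generic hyperplane section). Then $S^2/(sS)$ surjects onto the restriction of $S^2$ to the $d$ points of $E$, and $\dim(S^2 s^{-1}\cap\ldots)$ bookkeeping gives $\dim S^2 \ge \dim S + \dim(\text{quadrics on } E)$. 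If the $d$ points of $E$ span $\mathbb{P}^{k-2}$ and are in linear general position, then restricted quadrics have dimension at least $\min(d,2(k-1)-1)$. This is the classical Castelnuovo lemma: one builds quadrics as products of pairs of hyperplanes through chosen subsets of points.

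\textbf{Step 3: conclusion.} If $d\geq 2k-3$, Step 2 gives $\dim S^2\ge 3k-3$, a contradiction. Hence $d\le 2k-4$ and $\dim S^2 \ge k+d$, that is, $d\le k-1+\gamma$. Next, $L(D)$ must be non-special. Otherwise Clifford's theorem gives $k\le \dim L(D)\le d/2+1$, so $d\geq 2k-2$, contradicting $d\le 2k-4$. Therefore Riemann--Roch gives
$$\dim L(D)=d+1-g\le k+\gamma-g.$$
Finally, $k\le \dim L(D)$ yields $g\le \gamma$.

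\textbf{Main obstacle.} The hard step is the general position statement in Step 2 over a perfect field of positive characteristic. There the generic hyperplane section of a curve can fail uniform, or even linear, general position (strange curves, Frobenius phenomena). My first attempt would avoid needing full general position. I would use only that the points of $E$ span $\mathbb{P}^{k-2}$, which holds since $S$ restricted to $E$ has dimension $k-1$. I would then prove the weaker quadric bound
$$\dim S^2|_E \ge \min(d,2k-3)$$
by a Cauchy--Davenport/Kneser-type argument for the product $(S|_E)^2$ inside the algebra $K^{d}$. Here a stabilizer obstruction would correspond to $E$ splitting, which is excluded by $F=K(S)$ together with $K$ being algebraically closed in $F$. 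If that fails, the fallback is a separate treatment of the inseparable case, passing to $F^p$ via perfectness of $K$.
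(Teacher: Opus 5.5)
The paper does not prove this statement; it is quoted from Couvreur--Z\'emor. I therefore judge the proposal on its own.

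Your Step~3 is correct once $\dim S^2\ge k+\min(d,2k-3)$ is available. This needs the routine base change to $\bar K$, which preserves $\dim S$, $\dim S^2$, $\ell(D)$ and $g$ because $K$ is perfect and algebraically closed in $F$. You should state it, since your ``generic'' choices need an infinite field. Your count $\dim S^2\ge k+\dim S^2|_E$ is also correct, and in characteristic $0$ your Step~2 is the classical Castelnuovo argument.

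Step~1 as written does not work. Infinite-dimensionality of $A/(st)$ gives no bound on its degree-$2$ part. For instance, $B=K[x,y,z_1,\dots,z_{n-2}]/(z_iz_j,\,xz_i,\,yz_i)$ is standard graded of Krull dimension $2$ with $\dim B_1=n$ but $\dim B_2=3$. What you need is that, for general $s$, $S^2/sS$ surjects onto the degree-$2$ part of an \emph{integral}, non-degenerate hyperplane section $Y$ of the image. This is Bertini, and it is where $r\ge 2$ is used. Then the linear Cauchy--Davenport theorem (Hou--Leung--Xiang, Bachoc--Serra--Z\'emor) in $K(Y)$, in which $K$ is algebraically closed, gives $2(k-1)-1$.

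The genuine gap is Step~2 in positive characteristic, which the theorem covers, and your fallback does not close it.

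First, the available Kneser-type theorems are for field extensions. The split algebra $K^d$ has zero divisors, so a Kneser statement for componentwise products there must be proved or precisely cited.

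Second, your reason for excluding a non-trivial stabilizer is not an argument. The hypotheses $F=K(S)$ and $K$ algebraically closed in $F$ play the role of $\gcd(A)=1$, and that condition does not prevent periodicity modulo the top element. For $A=\{0,1,5,6,10\}$, with $\bar A$ its reduction mod $10$, one has $\bar A+\bar A=\{0,1,2,5,6,7\}\subseteq\Z/10\Z$. This set has period $5$ and only $6<\min(10,2|\bar A|-1)=7$ elements. What could rule out a block decomposition of $E$ in your setting is the genericity of $s$, that is, a monodromy (uniform-position type) argument for general hyperplane sections. You do not supply one.

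Third, the ``inseparable'' fallback addresses the wrong issue. Since $F=K(S)$, the map $\varphi$ is birational onto its image, hence separable, and the characteristic-$p$ failures of general position occur for exactly such maps.

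The intermediate result quoted here (Proposition~\ref{prop:KfS2} and the remark after it) indicates that Couvreur--Z\'emor work in the generic fibre instead of the special fibre $E$. For $f\in S$ with $(f)_\infty=D$, $F/K(f)$ separable and simple zeros, they prove $K(f)S^2=F$ inside the \emph{field} $F$. Combined with the count in the proof of Lemma~\ref{lem:gammabound}, this gives $\dim S^2\ge k+d$. That is exactly the inequality your Step~3 needs in the range $\dim S^2\le 3k-4$.
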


	More than thirty years after proving his $3k-4$ Theorem, Freiman also proved another result about arithmetic progressions and sumsets under the same assumptions on $B$ \cite{Freiman2009intcase}. 
	
	\begin{theorem}\label{thm:frei}
		Let $B \subseteq \Z_{\geq 0}$ be a finite set such that $0 \in B$ and $\gcd(B) = 1$. Suppose that 
		$$|B + B| \leq 3|B| - 4.$$ 
		Then there exists an interval $J \subseteq B+B$ such that $|J| \geq 2k-1$. 
	\end{theorem}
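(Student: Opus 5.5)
The plan is to reduce to an interval $[0,n]$ with few holes using the $3k-4$ Theorem, and then find a long interval inside $B+B$ by a pigeonhole argument. Throughout write $k=|B|$ and $|B+B| = 2k-1+\gamma$, so that $0\le \gamma\le k-3$.

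\textbf{Reduction.} Since $0\in B$ and $\gcd(B)=1$, the classical $3k-4$ Theorem (the integer version of Theorem~\ref{thm:3k-4}) says that $B$ lies in an arithmetic progression of length at most $k+\gamma$. Because $0\in B$ and $\gcd(B)=1$, this progression must have difference $1$. Hence $B\subseteq[0,n]$ with $n=\max B$ and $n\le k-1+\gamma$. Let $h=n+1-k$ be the number of holes of $B$ in $[0,n]$, so $h\le \gamma$ and $n\le 2k-4$.

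\textbf{Pigeonhole step.} For $t\le n$, let $h_t$ be the number of holes of $B$ in $[0,t]$. The sets $B\cap[0,t]$ and $t-(B\cap[0,t])$ both lie in $[0,t]$ and have size $t+1-h_t$. If $2(t+1-h_t)>t+1$, they intersect, and then $t\in B+B$. Dually, for $t\ge n$, let $h'_t$ be the number of holes of $B$ in $[t-n,n]$. Working in the window $[t-n,n]$ of length $2n-t+1$, we get $t\in B+B$ whenever $h'_t<(2n-t+1)/2$.

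A cruder form of this already gives $[2n-2k+2,\,2k-2]\subseteq B+B$. Indeed, $h_t\le h$ gives $t\in B+B$ for $t\in[2h,n]$, and symmetrically $t\in B+B$ for $t\in[n,2n-2h]$. This interval contains $n$ but is not yet long enough. So let $a$ be the largest hole of $B+B$ below $n$, and $c$ the smallest hole above $n$ (with $a=-1$ or $c=2n+1$ if none exist). Then $(a,c)$ is an interval contained in $B+B$. Since $a$ is a hole, the pigeonhole criterion forces $h_a\ge (a+1)/2$, i.e. $a\le 2h_a-1$. Likewise $2n-c\le 2h'_c-1$. Therefore
\[
c-a-1 \;\ge\; 2n+1-2(h_a+h'_c).
\]
If the ranges $[0,a]$ and $[c-n,n]$ are disjoint, i.e. $a<c-n$, then $h_a+h'_c\le h$. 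This gives $c-a-1\ge 2n+1-2h = 2k-1$, which is the claim.

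\textbf{Main obstacle.} The hard step is the overlapping case $a\ge c-n$. There $h_a+h'_c=h+m$, where $m$ is the number of holes of $B$ in $[c-n,a]$, and the bound above loses $2m$. To handle it, I would first use the extra information that $a$ and $c$ are holes of $B+B$ and not merely of $B$. Since $0\in B$ and $n\in B$, both $B\subseteq B+B$ and $n+B\subseteq B+B$ hold. Hence $a\notin B$, $c-n\notin B$, and more generally no $x\in[c-n,a]$ can satisfy both $x\in B$ and $a-x\in B$, nor both $x\in B$ and $c-x\in B$. The aim is to show that these constraints make $h$ large relative to $n$.

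Next I would use the global count. The number of holes of $B+B$ in $[0,2n]$ equals $2n+1-(2k-1+\gamma)=2h-\gamma\le h$. On the other hand, every hole of $B$ in $[0,a]$ and every translate $n+x$ of a hole $x$ of $B$ in $[c-n,n]$ has to be compared against the holes of $B+B$. I expect that in the overlap case these two facts force more than $2h-\gamma$ holes of $B+B$, a contradiction. If this counting does not close directly, the fallback is to apply the Cauchy--Davenport/Freiman $2k-1$ lower bound to $B\cap[0,a]$ and to $B\cap[c-n,n]$ separately, and compare the resulting lower bound on $|B+B|$ with $3k-4$.
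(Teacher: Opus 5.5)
\textbf{There is a genuine gap: the overlap case is left unproved.} Your reduction via the $3k-4$ Theorem, the two pigeonhole criteria, and the disjoint case $a<c-n$ are all correct. For the overlap case $a\ge c-n$, however, you only describe what you hope will happen. This is not a residual case. Every hole of $B+B$ below $n$ is a left stable hole of $B$, and every hole above $n$ comes from a right stable hole. So $a$ is the largest left stable hole and $c-n$ the smallest right stable hole, and your disjoint case is exactly the inequality $e<c$ in the paper's sketch of Freiman's argument, which the paper singles out as the tricky part.

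Your heuristic also points the wrong way. An overlap does not create extra holes of $B+B$. The contradiction comes from the overlap containing many \emph{unstable} holes of $B$, which leaves too few stable holes to account for the $2h-\gamma$ holes of $B+B$. The Cauchy--Davenport fallback bounds $B\cap[0,a]$ and $B\cap[c-n,n]$ separately and gives no control on the overlap, which is where the difficulty lies.

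\textbf{Two ingredients close the gap.} They are the additive shadows of Lemma~\ref{lem:dimt1} and Proposition~\ref{prop:main}.

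First, no $y\in[0,n]$ has both $y\notin B+B$ and $y+n\notin B+B$. Otherwise the pairings $z\leftrightarrow y-z$ on $[0,y]$ and $z\leftrightarrow y+n-z$ on $[y,n]$, together with $y\notin B$, give $k\le (n+2)/2$, contradicting $n\le 2k-4$. Hence distinct holes of $B+B$ come from distinct stable holes of $B$.

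Second, do not work with the extreme pair $(a,c)$. Suppose instead that some hole $a'<n$ and some hole $c'>n$ of $B+B$ satisfy $(a'+1)+(2n-c'+1)>2h$, and choose such a pair with minimal sum.
\begin{itemize}
\item Your disjoint argument forces $a'\ge c'-n$. Let $x$ be the number of holes of $B$ in $[c'-n,a']$.
\item Your pigeonhole bounds give $(a'-c'+n)+n+2\le 2h+2x$. Together with $x\le a'-c'+n+1$, this yields $x\ge n+1-2h=k-h$.
\item A left stable hole in $[c'-n,a')$ paired with $c'$, or a right stable hole in $(c'-n,a']$ paired with $a'$, would give a pair with sum at least $n+2$. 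Since $n<2k$, this exceeds $2h$, yet it is smaller than the minimum. So inside the overlap only $a'$ and $c'-n$ are stable.
\item The first ingredient then gives $2h-\gamma\le (h-x)+2$, that is, $x\le\gamma-h+2$.
\end{itemize}
Combining the two bounds on $x$ gives $k\le\gamma+2$, contradicting $\gamma\le k-3$. Hence every such pair has sum at most $2h$. For your extreme pair this says exactly $c-a-1\ge 2n+1-2h=2k-1$.
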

	\begin{remark}
		The actual theorem Freiman proved in \cite{Freiman2009intcase} is slightly stronger and more technical. It states that if $J = (e, c+\max(B))$ and $B \cap (e, c)$ has $d$ holes, then $|J| \geq 2k - 1 + 2d$. Here, holes are elements that are in $(e, c)$ but not in $B \cap (e, c)$. 
	\end{remark}
	
	In the analogy described above, the sumset $B+B$ is replaced by the product space $S^2$, while intervals or arithmetic progressions are replaced by Riemann--Roch spaces. Thus, the existence of a long interval contained in $B+B$ suggests asking whether $S^2$ contains a Riemann--Roch space of large dimension. This leads to the following conjecture.
	
	\begin{conjecturex}\label{Conj:main}
		Let $K$ be an algebraically closed field and let $F$ be a non-trivial extension field of $K$. Let $S \ni s$ be a finite-dimensional $K$-subspace such that $F = K(s^{-1}S)$ and $\dim_K S^2 \leq 3\dim_K S - 4$. Then there exists a Riemann--Roch space $L(G)$ such that 
		\begin{align*}
			L(G) \subseteq S^2 \quad \text{ and } \quad \dim_K L(G) \geq 2\dim_K S + g - 1. 
		\end{align*}
	\end{conjecturex}
	
	The main result of this paper is that the conjecture holds when $S$ has sufficiently small co-dimension in $L(D)$, the minimal Riemann--Roch space containing it. 
	
	\begin{theoremx}\label{Thm:main}
		Let $K, F$ and $S$ be as in Conjecture \ref{Conj:main} and $D$ a divisor of $F$ such that $S \subseteq L(D)$. Define $b$ as the co-dimension of $S$ in $L(D)$ and assume $2b \leq \dim_K S + 2$. Then Conjecture \ref{Conj:main} holds. 
	\end{theoremx}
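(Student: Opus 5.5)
The plan is to take $G$ of the form $G = 2D - E$, with $E$ an effective divisor of degree at most $2b$, and to certify the inclusion $L(2D-E)\subseteq S^2$ by duality inside $L(2D)$. Write $k=\dim_K S$. After translating by $s^{-1}$ and possibly shrinking $D$ (which only decreases $b$), we may assume $D$ is the minimal divisor with $S \subseteq L(D)$, so $\dim L(D) = k+b$. Theorem~\ref{thm:3k-4} bounds $\dim L(D)$ relative to $g$ and $\gamma$. From this I would argue that $\deg D$ is large enough for $D$ and $2D$ to be non-special. In that regime $\dim L(2D) = 2\deg D + 1 - g = 2(k+b) + g - 1$. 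Consequently any $L(2D-E)$ with $\deg E \leq 2b$ and $2D-E$ non-special has dimension at least $2k+g-1$, which is exactly the required bound. So the whole task reduces to finding such an $E$.

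For the duality, consider the annihilators $S^\perp \subseteq L(D)^*$, of dimension $b$, and $(S^2)^\perp \subseteq L(2D)^*$. Since $S^2$ is exactly the common kernel of $(S^2)^\perp$, it suffices to find an effective $E$ with $\deg E\le 2b$ such that every $\varphi \in (S^2)^\perp$ vanishes on $L(2D-E)$.

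Linear forms on $L(2D)$ can be realized via residues, $f\mapsto \sum_P \operatorname{res}_P(f\omega)$, for differentials $\omega$ with poles bounded appropriately. This suggests defining the \emph{weight} of a form $\varphi$, together with a \emph{support divisor}, as the smallest effective $E_\varphi$ such that $\varphi$ factors through $L(2D)/L(2D-E_\varphi)$. The analogous notion applies to forms on $L(D)$.

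The key link between the two annihilators is the condition $\varphi\in(S^2)^\perp$. It holds if and only if, for every $s\in S$, the form $f\mapsto\varphi(sf)$ restricted to $L(D)$ lies in $S^\perp$. Multiplication by $s$ can move supports only in a controlled way: the support of $\varphi(s\,\cdot)$ is governed by $E_\varphi$ minus the zeros of $s$ relative to $D$. I would therefore first bound the supports of elements of $S^\perp$. A $b$-dimensional space of forms has a joint support $E_1$ with $\deg E_1\le b$ after choosing a suitable basis, because each form contributes at most its own weight and one can pick a basis of increasing weights. Using this, I would show that each $\varphi\in (S^2)^\perp$ has support contained in $E := 2E_1$, or a similar divisor of degree at most $2b$.

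The main obstacle is this last step: controlling forms in $(S^2)^\perp$ that are not "products" of forms in $S^\perp$. I would argue by contradiction. Suppose some $\varphi$ has support not contained in $E$. Then the map $S\to S^\perp$, $s\mapsto \varphi(s\,\cdot)|_{L(D)}$, has a kernel of dimension at least $k-b$. Elements $s$ in this kernel force $\varphi$ to vanish on $sL(D)$. If $k-b$ is large compared to $b$, then the spaces $sL(D)$ for these $s$ generate a large piece of $L(2D)$. This is the step where the hypothesis $2b\le k+2$ should enter: it gives $k - b\ge b-2$, so the kernel is big enough to generate $L(2D-E)$ by a base-point-free-type argument. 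From that one obtains $\varphi|_{L(2D-E)}=0$, contradicting the assumption on its support.

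Finally, I would check the non-speciality of $2D-E$, which should follow from the degree bounds together with $g\le\gamma\le k-3$, to conclude that $\dim L(2D-E)\ge 2k+g-1$.
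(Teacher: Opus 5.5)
The construction of $E$ rests on a false claim: a $b$-dimensional space of forms need not have a joint support $E_1$ with $\deg E_1\le b$. By the kernel computation in the proof of Proposition~\ref{prop:linind}, $S^\perp\subseteq\langle E_1\rangle_D$ is equivalent to $L(D-E_1)\subseteq S$. Since $\ell(D-E_1)\ge \ell(D)-\deg E_1\ge k$, your claim would force $S=L(D-E_1)$, so every such $S$ would be a Riemann--Roch space.

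A concrete counterexample: take $F=K(x)$, $D=MP_\infty$ with $M\ge 4$, and $S$ the polynomials of degree $\le M$ with no $x$-term. Then $k=M$ and $b=1$. Also $S^2$ consists of the polynomials of degree $\le 2M$ with no $x$-term, so $\gamma=1$. Finally $S^\perp$ is spanned by $\ev_D(P_0,1)$, which has weight $2$. Your choice $E=2E_1=4P_0$ gives $\ell(2D-4P_0)=2k-3<2k-1$, whereas the correct divisor here is $2P_0$. Choosing a basis of increasing weights only bounds the joint support by the \emph{sum} of the weights. A single form on $L(D)$ can already have weight close to the bound of Proposition~\ref{prop:max_weight}.

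The kernel argument does not repair this, because it works one form at a time. Fix $\varphi\in\Phi_{2D}(S^2)$. The space $W_\varphi=\{s\in S:\varphi(s\,\cdot)|_{L(D)}=0\}$ has dimension $\ge k-b$, and $\varphi$ kills $W_\varphi L(D)$. Suppose you grant a surjectivity statement $W_\varphi L(D)=L(2D-B_\varphi)$, with $B_\varphi$ the base divisor of $W_\varphi$. This would need something like Green's $H^0$-lemma, fed by $k-b\ge g+3$ from Lemma~\ref{lem:gammabound}, not by $2b\le k+2$. Even then you only get $\varphi\in\langle B_\varphi\rangle_{2D}$ with $\deg B_\varphi\le 2b$, which is Lemma~\ref{lem:2bbound}. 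But $B_\varphi$ varies with $\varphi$. Your contradiction requires $B_\varphi\le E$ for one fixed $E$, and that is exactly the statement to be proved, so the argument is circular at the key point.

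The missing idea is simultaneous control of all supports. The paper does this in two steps.
\begin{itemize}
\item Forms of weight $\le 2b$ have a unique minimal expression (Lemma~\ref{lem:weightLD}), and every other representative has weight $\ge 2k+2z$ (Lemma~\ref{lem:2kbound}). If all weights are $\le 2b-2$, a sum of two minimal expressions has weight $\le 4b-4<2k+2$, so it is again minimal. This is precisely where $2b\le k+2$ is used. Hence the minimal expressions form a linear space whose generic member carries the common support (Proposition~\ref{prop:zbound}).
\item If some form $\varphi$ has weight $2b-1$ or $2b$, this linearity can fail, and the paper needs all of Section~\ref{sec:main}. It translates so that $D\ge D_\varphi$. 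It then splits $\Phi_{2D}(S^2)$ into forms supported on $D_\varphi$ and forms supported on $(f)_0$ (Lemmas~\ref{lem:dimt1}, \ref{lem:dis}, \ref{lem:disj}). Finally it rules out the latter by the Freiman-type count of Proposition~\ref{prop:main}, which gives $E=D_\varphi$.
\end{itemize}
Your sketch has no counterpart to either step.
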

	\begin{remark}
		As a consequence of Theorem \ref{thm:3k-4} the general range for $b$ is $[0, \dim_KS - 3 - g]$. 
	\end{remark}
	
	To prove Theorem \ref{Thm:main} we study the annihilators of $S$ and $S^2$ in the dual spaces $L(D)^\ast$ and $L(2D)^\ast$, or in other words we study the linear forms that disappear on $S$ and $S^2$. Recall that any linear form can be written as a linear combination of evaluation maps, that is maps that send $f$ to $f(P)$ for some fixed place $P$. We generalize this to so called \textit{evaluation forms}, by also considering maps that send $f$ to the evaluation at $P$ of a (higher order) Hasse-derivative of $f$. Then, we introduce the notion of \textit{weight}, which for each linear form roughly measures the minimal number of evaluation forms needed in a linear combination. This idea was inspired by the original proof of Freiman, where he studies the missing elements or \textit{holes} of $B$ and $B+B$ and by a conjecture of Randriambololona \cite[Subsection 8.3]{ran20quadhull}. The conjecture states that a hypersurface $H$ of a Riemann--Roch space $L(A)$ does not have full square $L(A)^2 = L(2A)$ if and only if $H$ is one of the following:
	\begin{enumerate}[\normalfont (i)]
		\item For some place $P$ of $F$ we have $H = L(A-P)$. 
		\item There are places $P$ and $Q$ of $F$ and $\alpha \in K^\times$ such that for all $f \in H$ we have $f(P) = \alpha f(Q)$. 
		\item There is a place $P$ of $F$ and $\alpha \in K$ such that for all $f \in H$ we have $D^{(1)}f(P) = \alpha f(P)$, where $D^{(1)}f$ denotes the first Hasse-derivative of $f$. 
	\end{enumerate}
	Randriambololona proves one direction, namely that any such hypersurface does indeed not have full square. Our result in Proposition \ref{prop:m_dimension} below implies the other direction and moreover, gives a necessary condition for a space $U \subseteq L(A)$ of any fixed co-dimension to not have full square. Besides proving Theorem \ref{Thm:main} we also further develop the underlying theory of evaluation forms and their weights, which we believe will be useful to prove other linear analogues of results from additive combinatorics in the future.
	
	The paper is structured as follows. In Section \ref{sec:pre} we recall some preliminaries about function fields and define for a Riemann--Roch space $L(A)$ the concepts of evaluation forms, formal expressions of evaluation forms and weight. In Section \ref{sec:lems} we build upon this and prove several results that hold for general $U \subset L(A)$. The first time we take into account that $\dim_K S^2 \leq 3\dim_K S - 4$ is in Section \ref{Sec:specific}. Here we give a short overview of Frieman's proof of Theorem \ref{thm:frei} and we prove that the conjecture must hold for $2b < \dim_K S$. Finally, in Section \ref{sec:main}, we prove Theorem \ref{Thm:main} for the full range $2b \leq \dim_K S + 2$.
	
	\subsection*{Notation}
	For the rest of the paper we will make the following assumptions. 
	
	Let $K, F$ and $S$ be as in Conjecture \ref{Conj:main}, that is $K$ is algebraically closed, $F$ is a function field over $K$ and $S\subset F$ such that for $s\in S$ we have $K(s^{-1}S) = F$. Define $D$ as the divisor of minimal degree such that $S \subseteq L(D).$ 
	
	Note that $L(G) \subset S$ implies that $fL(G) = L(G - (f)) \subset fS$ for any $f \in F^\times$, so after possibly replacing $S$ by $s^{-1}S$ and $D$ by $D + (s)$ for some $s\in S$, we may assume that
	$$K\subseteq S\subseteq L(D)$$
	and in particular that $ D\ge 0$.
	
	Finally, we define the following constants: 
	\begin{align*}
		k&:= \dim_K S;\\
		N &:= \deg (D); \\
		\gamma &:= \dim_K S^2 - 2\dim_K S + 1= \dim_K S^2 - 2k + 1; \\
		b &:= \dim_K L(D)  - \dim_K S.
	\end{align*}
	The constant $\gamma$ is also called the \textit{combinatorial genus} of $S$, and $b$ can be recognized as the co-dimension of $S$ in $L(D)$. We will see that $\dim_K L(D) = N - g + 1$ and thus, $N = b + k + g - 1.$
	
	By assumption we have that $\gamma \leq k-3$, which by Theorem \ref{thm:3k-4}  implies that $\dim_K L(D) \leq k + \gamma$ and that $F$ has transcendence degree $1$. 
	
	\subsection*{Acknowledgements}
	We would like to thank Alain Couvreur and Gilles Z\'emor for helpful discussions and for bringing our attention to this problem. The first author was funded by the Horizon-Europe MSCA-DN project ENCODE and the second author was funded by the HYPERFORM consortium, funded by France through Bpifrance.
	
	\section{Preliminaries and evaluation forms}\label{sec:pre}
	\subsection{Preliminaries}
	We first recall a few standard notions and facts about function fields, Riemann–Roch spaces and linear algebra that will be used throughout the paper.
	
	Fix a field $K$. We take all vector-space dimensions over $K$ unless otherwise specified, and write $\dim$ instead of $\dim_K$.
	
	A \textit{function field} over $K$ of transcendence degree $1$, denoted by $F/K$, is a finite extension of the rational function field $K(x)$, where $x$ is transcendental over $K$. 
	
	A \textit{valuation ring} $\mathcal{O}$ of $F/K$ is a subring such that $K\subsetneq \mathcal O\subsetneq F$ and for every $z\in F^\times$, either $z\in\mathcal O$ or $z^{-1}\in\mathcal O$. Every valuation ring $\mathcal O$ has a unique maximal ideal $P$ which is principal. We call $P$ a \textit{place} of $F/K$. A generator $t_P$ of a place $P$ is called a \textit{local parameter}. The corresponding \textit{discrete valuation} is denoted by
	$v_P:F^\times\longrightarrow\mathbb Z$ and gives $v_P(f) = m$ when $f = at_P^{m} + O(t_P^{m+1})$ for non-zero $a$.    
	
	The \textit{divisor group} of $F$, denoted by $\operatorname{Div}(F)$, is the free abelian group generated by the places of $F$. Its elements are called divisors; thus, a \textit{divisor} is a finite formal sum of places.
	For a place $P$ of $F$, its \textit{degree} is defined by
	$ \deg P:=[\mathcal O_P/P:K]. $
	Accordingly, the degree of a divisor
	$$ A=\sum_P n_PP $$
	is defined by
	$$ \deg A:=\sum_P n_P\deg P. $$
	In the following,  we assume that $K$ is algebraically closed. Therefore, any place $P$ has degree 1 and any divisor $A$ has degree $\sum_{P} n_P$. The \textit{valuation of $A$ at a place $P$} is defined by $v_P(A):=n_P$, and the \textit{support} of $A$ is defined by $\Supp(A):=\{P\mid v_P(A)\neq 0\}.$
	
	For a function $f\in F^\times$, the \textit{principal divisor} associated with $f$ is defined by
	$$(f):=\sum_P v_P(f)P,$$
	where the sum runs over all places $P$ of $F$.
	This sum is finite, since $v_P(f)=0$ for all but finitely many places $P$. 
	We respectively write the \textit{zero divisor} and \textit{pole divisor} of $f$ as
	$$(f)_0 :=\sum_{ v_P(f)> 0} v_P(f) P, \qquad\text{and}\qquad (f)_\infty :=\sum_{ v_P(f)< 0} - v_P(f) P. $$
	We have $\deg (f)_\infty =\deg (f)_0$, and
	if $f\in F\setminus K$ then $ [F: K (f) ] =\deg (f)_\infty =\deg (f)_0$. 
	Consequently the degree of a principal divisor is $\deg((f)) =0 $. 
	
	The divisor group is partially ordered by coefficient-wise comparison. A divisor $ G=\sum_{i=1}^\sigma n_iP_i $
	is called \textit{effective} when $ n_i\geq 0$ for every $i=1,\ldots,\sigma$, this is denoted by $G \geq 0$. 
	Likewise, $G \ge H$ if $G - H \ge 0$.  
	Given a divisor $D$ of $F$, the \textit{Riemann--Roch space} $ L(D)$ is defined as
	$$ L(A) := \{ f \in F \mid (f) + A \ge 0 \} \cup \{0\}.$$
	In particular, we have $ L( 0)=K$.
	We denote the dimension of ${L} (A)$ by $\ell(A)$. 
	For every divisor $A$ and every function $f\in F^\times$, we have $L(A-(f) ) =fL(A) $. The dual space of $L(A)$ is denoted by $L(A)^\ast$.
	
	To any function field we can associate a smooth curve over $K$ that has $F$ as its function field. The \textit{genus} of $F$ is then defined as the genus of the curve and denoted by $g$.
	
	For a detailed introduction to function fields, places, and valuations, we refer the reader to \cite{stichtenoth2009algfuncfields}.
	
	Lastly, we recall the following three theorems on Riemann--Roch spaces.
	\begin{theorem}[{\cite[Thm.~1.5.17]{stichtenoth2009algfuncfields}}]\label{thm:RR}
		Let $F/K$ be a function field of genus $g$ and let $A$ be a divisor of $F/K$.
		If $\deg(A)\ge 2g-1$, then
		$$\ell(A)=\deg(A)+1-g.$$
	\end{theorem}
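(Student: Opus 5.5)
This statement is the classical consequence of the full Riemann--Roch theorem, and the plan is to derive it from that theorem rather than from first principles. Recall the general form (see \cite{stichtenoth2009algfuncfields}): there is a canonical divisor $W$ of $F/K$ such that for every divisor $A$,
$$\ell(A)-\ell(W-A)=\deg(A)+1-g.$$
With this identity in hand, the claim reduces to showing that $\ell(W-A)=0$ whenever $\deg(A)\geq 2g-1$. The proof therefore has three steps.

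\emph{Step 1: the degree of $W$.} First I pin down $\deg W=2g-2$. Applying the identity with $A=0$ gives $\ell(0)-\ell(W)=1-g$. Since $L(0)=K$, we have $\ell(0)=1$, and hence $\ell(W)=g$. Applying the identity with $A=W$ then gives
$$\ell(W)-\ell(0)=\deg W+1-g,$$
that is, $g-1=\deg W+1-g$, so $\deg W=2g-2$.

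\emph{Step 2: divisors of negative degree.} Next I show that $\ell(B)=0$ whenever $\deg B<0$. Suppose $0\neq f\in L(B)$. Then $(f)+B\geq 0$, so $\deg((f)+B)\geq 0$. Principal divisors have degree $0$, as recalled above, so this gives $\deg B\geq 0$, a contradiction.

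\emph{Step 3: conclusion.} If $\deg(A)\geq 2g-1$, then $\deg(W-A)\leq 2g-2-(2g-1)=-1<0$. By Step 2, $\ell(W-A)=0$, and the Riemann--Roch identity becomes $\ell(A)=\deg(A)+1-g$.

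The only substantive input is the Riemann--Roch identity itself, including the existence of the canonical divisor $W$. If one insisted on proving it from scratch, that would be the main obstacle. The route I would take is through adeles (repartitions) and Weil differentials: establish Riemann's inequality, identify the index of speciality $\ell(A)-\deg(A)-1+g$ with the dimension of the space of Weil differentials of divisor at least $A$, and show that this space is isomorphic to $L(W-A)$. Since the paper simply quotes \cite[Thm.~1.5.17]{stichtenoth2009algfuncfields}, I would take that identity as given. Steps 1--3 are then routine.
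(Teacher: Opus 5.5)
Your proof is correct. It is exactly the standard derivation in the source the paper cites: Stichtenoth first obtains $\deg W = 2g-2$ and $\ell(W)=g$ from Riemann--Roch, then concludes $\ell(W-A)=0$ for $\deg A\geq 2g-1$; the paper itself simply quotes the result, and your step $\ell(0)=1$ is valid here because $K$, being algebraically closed, is the full constant field of $F$.
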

	
	We state Clifford's theorem in an extended form, tailored to the range needed in the paper.
	
	\begin{theorem}[Clifford's Theorem, {\cite[Thm.~1.6.13]{stichtenoth2009algfuncfields}} ]
		For all divisors $A$ with $-2\le \deg A \le 2g$ it holds that
		$$ \ell (A) \le 1+\frac{1}{2} \deg A . $$
	\end{theorem}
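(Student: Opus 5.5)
The plan is to split according to the value of $\deg A$ and combine Riemann--Roch with a product-dimension inequality. The range $0 \le \deg A \le 2g-2$ is the classical Clifford theorem cited from \cite{stichtenoth2009algfuncfields}. The extension only adds the three boundary values $\deg A \in \{-2,-1\}$ and $\deg A \in \{2g-1, 2g\}$, each of which is elementary.

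\emph{Negative degree.} If $\deg A < 0$, then $L(A)=0$, since a nonzero $f\in L(A)$ would give $0 \le \deg((f)+A) = \deg A$. So $\ell(A)=0$. Since $\deg A \ge -2$, we have $1+\tfrac12\deg A \ge 0$, and the inequality holds.

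\emph{Large degree.} If $\deg A \in \{2g-1,2g\}$, then Theorem \ref{thm:RR} gives $\ell(A) = \deg A + 1 - g$. The inequality $\deg A + 1 - g \le 1 + \tfrac12 \deg A$ is equivalent to $\deg A \le 2g$, which holds.

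\emph{Middle range $0\le \deg A\le 2g-2$.} Here we can simply invoke the cited theorem. To keep the argument self-contained, one would argue as follows. Let $W$ be a canonical divisor, and use the full Riemann--Roch identity $\ell(A)-\ell(W-A)=\deg A+1-g$.
\begin{itemize}
\item If $\ell(A)=0$, there is nothing to prove.
\item If $\ell(W-A)=0$, then $\ell(A)=\deg A+1-g$, and we conclude exactly as in the large-degree case, since $\deg A\le 2g$.
\item If both $\ell(A)$ and $\ell(W-A)$ are positive, the key step is the inequality
$$\ell(A)+\ell(B)-1\le \ell(A+B) \qquad \text{for } \ell(A),\ell(B)>0,$$
applied with $B=W-A$. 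Since $\ell(W)=g$, this gives $\ell(A)+\ell(W-A)\le g+1$. Adding this to the Riemann--Roch identity yields $2\ell(A)\le \deg A+2$, as required.
\end{itemize}

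\emph{The product inequality (main step).} I would prove it by fixing a place $P$ and using that $K$ is algebraically closed, so every place has degree $1$. Consider the set $V_A := \{v_P(f) \mid f\in L(A)\setminus\{0\}\}$ of valuations at $P$. Since $\deg P = 1$, the filtration of $L(A)$ by order at $P$ has successive quotients of dimension at most $1$, so $|V_A| = \ell(A)$. Products $fh$ with $f\in L(A)$ and $h\in L(B)$ lie in $L(A+B)$, and $v_P(fh) = v_P(f)+v_P(h)$. Hence $V_{A+B}\supseteq V_A+V_B$. For finite nonempty subsets of $\Z$ one has $|V_A+V_B|\ge |V_A|+|V_B|-1$, which gives the inequality.

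The only genuinely nontrivial input is this product inequality together with the full Riemann--Roch theorem, and both are standard. The boundary cases are direct arithmetic checks.
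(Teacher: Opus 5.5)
Your proposal is correct and matches the paper, which gives no proof beyond citing Stichtenoth's Theorem 1.6.13 for $0\le\deg A\le 2g-2$ and tacitly relies on the boundary checks you supply: $L(A)=0$ for $\deg A\in\{-2,-1\}$, and Theorem \ref{thm:RR} for $\deg A\in\{2g-1,2g\}$. Your optional self-contained argument is also sound. The count $|V_A|=\ell(A)$ uses that $P$ has degree one, which holds because $K$ is algebraically closed. Its three cases ($\ell(A)=0$, $\ell(W-A)=0$, both positive) use only $\deg A\ge -2$, only $\deg A\le 2g$, and nothing, respectively, so it proves the whole extended range at once and shows exactly where the endpoints $-2$ and $2g$ come from.
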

	
	\begin{theorem}[{\cite[Thm.~6]{Mumford2011}}]
		Let $A$ and $A'$ be two divisors of $F$ such that $\deg(A) \geq 2g$ and $\deg(A') \geq 2g+1$. Then,
		$$L(A)L(A') = L(A + A').$$ 
	\end{theorem}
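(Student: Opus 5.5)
The inclusion $L(A)L(A')\subseteq L(A+A')$ is immediate from $v_P(fh)=v_P(f)+v_P(h)$, so the work is the reverse inclusion. I would do it by a dimension count based on the base-point-free pencil trick, written in function-field language. I treat first the case where the two degrees differ, and then reduce the case of equal degrees to it.

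\emph{Step 1: a base-point-free pencil.} Let $B$ be a divisor with $\deg B\ge 2g$. Then for every place $P$, Theorem~\ref{thm:RR} gives $\ell(B-P)=\ell(B)-1$, since $\deg(B-P)\ge 2g-1$. So $L(B)$ has no base points. Pick $s_1\in L(B)\setminus\{0\}$, and set $E_1=B+(s_1)\ge 0$. Since $K$ is infinite and each $L(B-P)$ is a proper subspace, we can choose $s_2\in L(B)$ outside the finitely many subspaces $L(B-P)$ with $P\in\Supp E_1$. Then $E_2=B+(s_2)$ is effective and has support disjoint from $E_1$.

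\emph{Step 2: the unequal-degree case.} Suppose $\deg B>\deg C$ and $\deg C\ge 2g-1$. Consider the map
$$L(C)\oplus L(C)\to L(B+C),\qquad (f,h)\mapsto s_1f+s_2h.$$
If $(f,h)$ lies in the kernel, put $u=f/s_2=-h/s_1$. The conditions $f\in L(C)$ and $h\in L(C)$ translate into $(u)+C-B+E_2\ge 0$ and $(u)+C-B+E_1\ge0$. Because $E_1$ and $E_2$ have disjoint supports, this forces $u\in L(C-B)$, and conversely every $u\in L(C-B)$ gives a kernel element. Hence the image has dimension $2\ell(C)-\ell(C-B)$.

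By Riemann--Roch, $\ell(C-B)=\deg(C-B)+1-g+\ell(W-C+B)$, where $W$ is a canonical divisor. Since $\deg B>\deg C$, the divisor $W-C+B$ has degree at least $2g-1$, so $\ell(W-C+B)=0$ (Serre duality). Combined with $\ell(C)=\deg C+1-g$, the image has dimension $\deg B+\deg C+1-g=\ell(B+C)$. The image is contained in $L(B)L(C)\subseteq L(B+C)$, so equality holds. This settles the theorem whenever $\deg A\ne\deg A'$: take $B$ to be whichever of $A,A'$ has the larger degree, and $C$ the other.

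\emph{Step 3: equal degrees.} Suppose $\deg A=\deg A'=d\ge 2g+1$. Choose two distinct places $P\ne Q$. Since $\deg(A'-P)=d-1\ge 2g$ and $\deg A>\deg(A'-P)$, Step 2 gives $L(A)L(A'-P)=L(A+A'-P)$, and likewise for $Q$. Both are contained in $L(A)L(A')$, so
$$L(A)L(A')\supseteq L(A+A'-P)+L(A+A'-Q).$$
The intersection of the two summands is $L(A+A'-P-Q)$. Since $\deg(A+A'-P-Q)\ge 4g\ge 2g-1$, all three spaces have dimension given by Theorem~\ref{thm:RR}, so the sum has dimension
$$2(\ell(A+A')-1)-(\ell(A+A')-2)=\ell(A+A').$$
Hence the sum equals $L(A+A')$.

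The main obstacle is the vanishing $\ell(W-C+B)=0$ in Step 2. That vanishing is exactly why the pencil must be taken in the divisor of strictly larger degree, and why the equal-degree case needs the separate patching argument of Step 3.
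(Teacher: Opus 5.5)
\textbf{There is a genuine gap: Step~2 fails, and every case of your argument depends on it.}

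The claimed vanishing $\ell(W-C+B)=0$ is false. From $\deg(W-C+B)\ge 2g-1$, Riemann--Roch gives $\ell(W-C+B)=\deg B-\deg C+g-1\ge g$. What actually vanishes is the complementary $\ell(C-B)$, which is already clear from $\deg(C-B)<0$. The value you plugged in, $\ell(C-B)=\deg(C-B)+1-g$, is negative as soon as $g\ge1$.

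With the correct value $\ell(C-B)=0$, the image of $(f,h)\mapsto s_1f+s_2h$ is the direct sum $s_1L(C)\oplus s_2L(C)$, of dimension $2\ell(C)$, and
\[
\ell(B+C)-2\ell(C)=\deg B-\deg C+g-1=\ell(W-C+B)\ \ge\ g .
\]
So the image is a proper subspace of $L(B+C)$ whenever $g\ge1$. Concretely, take an elliptic curve with $B=3O$ and $C=2O$. A pencil in $L(3O)$ times $L(2O)=\langle 1,x\rangle$ spans at most $4$ dimensions of the $5$-dimensional space $L(5O)$. Both your unequal-degree case and your Step~3 rest on Step~2, so nothing is proved for $g\ge 1$.

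The quantity you isolated is exactly the cokernel of your map, and a single pencil can never kill it. Swapping the roles does not help either. Putting the pencil in $L(C)$ and multiplying by all of $L(B)$ (the usual base-point-free pencil trick) leaves a cokernel of dimension $\ell(W-B+C)$. That vanishes when $\deg B-\deg C\ge 2g-1$, but for a degree gap of one it equals $g-2+\ell(B-C)$, which is positive once $g\ge3$.

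The missing idea is to use more than a pencil. For instance, Green's $H^0$-lemma says: if $V\subseteq L(B)$ is base-point-free, then the product space $V\,L(C)$ equals $L(B+C)$ as soon as $\ell(W-C+B)\le\dim V-2$. Your orientation is the right one once the pencil is replaced by the whole system.
\begin{itemize}
\item Unequal degrees: take $V=L(B)$ with $B$ the divisor of larger degree. The condition reads $g-1+\deg B-\deg C\le \deg B-g-1$, which is exactly $\deg C\ge 2g$.
\item Equal degrees: take $B=A'$. Clifford gives $\ell(W-A+A')\le g\le \ell(A')-2$, since $\deg A'\ge 2g+1$.
\end{itemize}
Your Step~3 patching is a correct reduction. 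However, it only moves the difficulty to a gap-one pair, which is precisely where a pencil is insufficient.

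The paper itself gives no proof of this statement; it simply quotes Mumford.
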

	
	We will mostly want to apply these theorems for $A = D$ and $A= 2D$. The following direct consequence of Clifford's Theorem shows that this is permitted. 
	
	\begin{proposition}
		It holds that $\deg D \geq 2g+2$. 
	\end{proposition}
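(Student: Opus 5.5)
We argue by contradiction, combining Clifford's Theorem with Theorem \ref{thm:3k-4}. Recall the standing facts: $K\subseteq S\subseteq L(D)$ with $D\geq 0$ of minimal degree, $k=\dim S\geq 3$ (since $\gamma\leq k-3$ forces $k\geq 3$), $\ell(D)=k+b$ with $b\geq 0$, and Theorem \ref{thm:3k-4} gives $g\leq\gamma\leq k-3$. Suppose, for contradiction, that $N=\deg D\leq 2g+1$.

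\emph{Case $N\leq 2g$.} Since $D\geq 0$ we have $N\geq 0\geq -2$, so Clifford's Theorem applies and gives
$$k\leq \ell(D)\leq 1+\tfrac{N}{2}\leq 1+g\leq 1+(k-3)=k-2,$$
a contradiction.

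\emph{Case $N=2g+1$.} Here $N\geq 2g-1$, so Theorem \ref{thm:RR} gives $\ell(D)=N+1-g=g+2$. Hence $k\leq g+2\leq (k-3)+2=k-1$, again a contradiction. Both cases are impossible, so $N\geq 2g+2$.

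The only point needing care is that Clifford's Theorem, as stated, requires $\deg A\leq 2g$, which is why the boundary value $N=2g+1$ is handled separately by Riemann--Roch. The essential input is the inequality $g\leq k-3$, coming from Theorem \ref{thm:3k-4} together with the hypothesis $\dim S^2\leq 3k-4$. The argument does not actually use the minimality of $D$ beyond $S\subseteq L(D)$, and it does not need the sharper bound $\ell(D)\leq k+\gamma-g$.
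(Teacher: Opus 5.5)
Your proof is correct and follows the paper's argument: Clifford's Theorem rules out $N\leq 2g$, Theorem \ref{thm:RR} rules out $N=2g+1$, and both contradictions come from $g\leq\gamma\leq k-3$ supplied by Theorem \ref{thm:3k-4}. Your check that $N\geq 0$, so Clifford's lower hypothesis $\deg A\geq -2$ is satisfied, is a small detail the paper leaves implicit.
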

	\begin{proof}
		Suppose that $\deg D \leq 2g$, by Clifford's Theorem we have 
		$k \leq \ell(D) \leq 1 + g.$ Using Theorem \ref{thm:3k-4} we have $1 + g \leq 1 + \gamma \leq k - 2$, a contradiction. 
		
		Now suppose that $\deg D = 2g + 1$, then by Theorem \ref{thm:RR} we have that 
		$$ k \leq \ell(D) = g + 2 \leq \gamma + 2 \leq k - 1,$$ again a contradiction.
	\end{proof}
	
	\subsection{New definitions} \label{Sec:defs}
	Our main strategy is to study the subset $S\subseteq L(D)$ via the subspaces of linear forms in $L(D)^\ast$ and $L(2D) ^\ast$ that vanish on $S$ and $S^2$ respectively. To do this we set up a general framework in which we can express linear forms in so called evaluation forms. It will turn out that the more complex a linear form is, the more constraints there are for it to vanish on $S^2$. The complexity of a linear form can be derived from its weight, which we will also introduce below.
	
	In this entire subsection $A$ will be an arbitrary divisor of $F$. 
	
	\begin{definition}
		Let $U \subseteq L(A) $ be a linear subspace. The \textit{annihilator of $U$} is the following $K$-linear subspace
		$$\Phi_A(U) := \{ \varphi \in L(A)^\ast \mid \varphi(U) = 0 \}.$$ 
	\end{definition}
	
	\begin{remarks}~
		\begin{itemize}
			\item We can recover $U\subseteq L(A)$ from $\Phi_A(U)$ by 
			$$U = \{ f \in L(A) \mid \varphi(f) = 0 \textup{ for all } \varphi \in \Phi_A(U) \}.$$ 
			\item Let $U$ and $V$ be two subspaces of $L(A)$. Then, we have the inclusion $U \subseteq V$ if and only if  $\Phi_A(V) \subseteq \Phi_A(U)$. 
		\end{itemize}
	\end{remarks}
	
	\begin{definition}\label{def:functionals}
		Let $P$ be any place of $F$, and fix a local
		parameter $t$ at $P$. 
		For $f \in L(A)$ we have the local expansion
		$$f=\sum_{n\ge -v_P(A)} a_n t^n
		\in K(\!(t)\!). $$
		For $m\ge 0$ an integer, define
		\begin{align*}
			\ev_{A,t}(P,m):L(A)&\to K\\
			f&\mapsto a_{-v_P(A)+m}.
		\end{align*}
		Then $\ev_{A, t}(P, m)$ is a \textit{(linear) evaluation form} on $L(A)$.
	\end{definition}
	\begin{example}\label{ex:weight}
		Consider the rational
		function field $F=K(x)$ and take $A=MP_\infty$. Then
		$$L(A)=L(MP_\infty)=K[x]_{\le M}.$$
		The places of $F$ are given by the finite places $P_\alpha$, for
		$\alpha\in K$, together with the place at infinity $P_\infty$.
		Let $\alpha\in K$ and take the local parameter $t=x-\alpha$ at $P_\alpha$.
		Since $v_{P_\alpha}(A)=0$, every $f\in K[x]_{\le M}$ has a local expansion
		$$f(x)=a_0+a_1(x-\alpha)+\cdots+a_M(x-\alpha)^M.$$
		Hence, for $0\le m\le M$, the form $ \ev_{A,t}(P_\alpha,m):L(A)\to K$
		evaluates $\ev_{A,t}(P_\alpha,m) (f)=a_m$. This is precisely the $m$th Hasse derivative functional:
		\begin{align*}
			D^{(m)} :K[x]_{\le M}&\to K,\\
			f&\mapsto a_m.
		\end{align*}
		At the place at infinity, considering the local parameter $t=1/x$, we can write
		$$ f=\sum_{i=0}^M c_i x^i =\sum_{i=0}^M c_i t^{-i}=\sum_{n\ge -M} a_n t^n.$$
		Since $v_{P_\infty}(A)=M$, we have
		$\ev_{A,t}(P_\infty,m) (f) =a_{-M+m} $.
	\end{example}
	\begin{remark} \label{rem:hasse}
		In general it holds that 
		$$\ev_{A, t}(P, m)(f) = D^{(m)} \left( t^{v_P(A)} f \right)(P),$$
		where $D^{(m)}$ is the $m$th Hasse derivative. 
	\end{remark}
	Every linear form in $L(A)^\ast$ can be expressed as a sum of a finite number of evaluation forms, also see Proposition \ref{prop:linind} below. Hence, they form a generating set of $L(A)^\ast$, but not a basis, since not all such expressions will give unique linear forms. To be able to distinguish between several expressions that represent the same linear form we introduce the notion of formal expressions. 
	
	\begin{definition}
		Fix a local parameter $t_P$ for every place $P$ of $F$. Then elements in the space
		$$\bigoplus_{\substack{P\text{ a place} \\  m\geq 0}} \lambda_{P, m}\ev_{t_P}(P, m) \quad \text{ for } \lambda_{P, m} \in K$$
		are called \textit{formal expressions in evaluation forms} or just \textit{expressions}. 
	\end{definition}
	\begin{remark}
		We will usually denote expressions by $X, Y$ or $Z$ and linear forms by $\varphi, \chi$ or $\psi$. A priori an expression is not a linear form, because it is not specified on which space $L(A)$ it acts. For each divisor $A$ we can \textit{project} an expression $X$ to a linear form $X_A$ by sending $\ev_t(P, m)$ to $\ev_{A, t}(P, m)$ for all $P$ and $m$. We also say that \textit{$X$ represents $X_A$}.
		Note in particular that two distinct expression $X_1$ and $X_2$ may project to the same linear form on $L(A)$.  
	\end{remark}
	
	In the following we fix a choice of local parameters $t_P$ for each place $P$ and remove $t_P$ from the subscript of both $\ev_{A, t}$ and $\ev_t$. The subscript will only appear in some explicit calculations. 
	
	We are now ready to introduce the notion of weight of an expression. 
	
	\begin{definition}
		Let $X$ be an expression of evaluation forms of the form $$X = \sum_{P, m} \lambda_{P, m} \ev (P, m) \quad \text{ with } \lambda_{P, m} \in K.$$ 
		For each $P$ set 
		$$r_P(X) := \begin{cases} -1 &\text{ if } \lambda_{P, m} = 0 \text{ for all } m; \\
			\max\{m\mid \lambda_{P,m}\neq 0\} &\text{ else.}
		\end{cases}$$
		Define the \textit{divisor of $X$} as 
		$$D_X := \sum_{P} (r_P(X) + 1)P$$
		and the \textit{weight of $X$} as 
		$$W(X) := \deg(D_X).$$ 
		Finally, for a linear form $\varphi \in L(A)^\ast$ define \textit{the weight of $\varphi$} as 
		$$w_A(\varphi) := \min\{W(X) \mid X \text{ represents } \varphi \}.$$ 
	\end{definition}
	
	\begin{remarks}\label{re:exweight}~
		\begin{itemize}
			\item The weight
			$w_A(\varphi)$ for a linear form $\varphi$ is not necessarily attained by a unique expression $X$. For example, take $F=K(x)$ and $A = 8P_\infty$, and consider 
			$$X_1= \ev_{x}(P_0,4)  \quad \text{and}\quad X_2= \ev_{\frac 1x}(P_\infty,4).$$ 
			Then $X_1$ and $X_2$ represent the same linear form $\varphi$ in $L(A)^\ast$ and both have weight $5$. Moreover, by Lemma \ref{lem:weightLD} below we have  $w_A(\varphi) = 5$.
			\item The weight of a linear form does not depend on the choice of local parameters. For let $t$ and $t'$ be two local parameters at $P$. 
			Then for all integers $j$ there exist $b_{i, j} \in K$ with $b_{j, j} \neq 0$ such that
			$$(t')^j=\sum_{i\geq j}b_{i, j}t^i.$$
			For any $f\in L(A)$ we have 
			$$f=\sum_{j=-v_P(A)}^{\infty}a_j(t')^j,\qquad\text{ where }\qquad a_j=\operatorname{ev}_{t', A}(P,j+v_p(A))(f).$$
			Substituting $(t')^j$ into the expansion of $f$ gives
			$$f=\sum_{j=-v_P(A)}^{\infty}a_j\sum_{i\geq j}b_{i,j}t^i.$$
			Therefore, for every $m \geq -v_P(A)$ we have,
			$$\operatorname{ev}_{t,A}(P,m + v_P(A))=
			\sum_{j=-v_P(A)}^{m}
			b_{m,j}\operatorname{ev}_{t',A}(P,j+v_P(A)),$$
			which both have weight $m + v_P(A) + 1$, because $b_{m, m} \neq 0$.  
		\end{itemize}
	\end{remarks}
	
	\begin{definition}
		Let $E=\sum_{P} r_P P \ge 0 $ be a divisor. We define the following $K$-linear subspaces:
		\begin{align*}
			\divi E &:= \langle  \ev (P, m) \mid  0 \le  m\le r_P-1  \rangle;\\
			\divi {E}_A &:= \langle  \ev_{A} (P, m) \mid   0\le m\le r_P-1  \rangle.    
		\end{align*}
		Let $X$ be an expression of evaluation forms, the \textit{support of $X$} is defined as
		$$ \Supp (X):= \divi{D_X}.$$
		Let $\varphi\in L(A)^*$ be a linear form that is represented by a unique expression $X$ of weight $W(X)= w_A(\varphi)$.
		The \textit{support of $\varphi$} is defined as the $K$-linear subspace
		$$ \Supp (\varphi):= \divi{D_X}_A \subseteq L(A)^\ast.$$
	\end{definition}

	\begin{remark}
		The subspace $\divi{E}$ is generated by the formal expressions 
		$$\{ \ev(P,m)\mid 0\le m\le r_P-1 \}. $$
		These expressions are linearly independent over $K$, hence we have: 
		$$  \dim \divi E = \sum_{P} r_P   =\deg (E).  $$
		However, the set
		$$\{ \ev_A(P,m)\mid 0\le m\le r_P-1 \} $$
		is not necessarily linearly independent over $K$. Indeed, we have 
		$$\dim \divi{E} _A \le \deg E .$$
	\end{remark}
	
	\section{Linear forms and their representations}\label{sec:lems}
	In this section we develop the theory for evaluation forms, expression and weights further by proving some basic properties about them. The main result of the section is Proposition \ref{prop:m_dimension}, where we give a necessary condition for the annihilator of a square space to contain a form of high weight.
	
	We take $A$ to be any divisor such that $\deg(A) \geq 2g - 1$. 
	\subsection{Evaluation spaces, expressions and weight}
	We start by estimating the dimension of $\divi E_A$ from below.
	\begin{proposition} \label{prop:linind}
		Let $E \geq 0$ be a divisor and consider $\langle E \rangle_A \subseteq L(A)^\ast.$
		\begin{enumerate}[\normalfont (i)]
			\item Suppose that $\deg (E) \leq \deg (A) - 2g + 1$. Then, we have
			$$\dim \langle E \rangle_A = \deg(E).$$ 
			\item Suppose that $\deg(E) = \deg(A) - 2g + 1 + d$ for some $0 \le d \le 2g + 1$. Then, we have
			$$\dim \langle E \rangle_A \geq \deg(E) - (d+1)/2.$$ 
		\end{enumerate}
	\end{proposition}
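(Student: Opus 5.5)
The plan is to identify $\langle E\rangle_A$ with the dual of a quotient of $L(A)$ and then compute dimensions with Riemann--Roch and Clifford. Consider the restriction map
$$\rho: L(A) \longrightarrow \bigoplus_{P} t_P^{-v_P(A)} K[[t_P]] / t_P^{-v_P(A)+r_P} K[[t_P]],$$
sending $f$ to its truncated local expansions at the places $P$ of $E=\sum_P r_P P$. The coordinates of $\rho(f)$ are exactly the values $\ev_A(P,m)(f)$ for $0\le m\le r_P-1$. Hence $\langle E\rangle_A$ is the image of the dual map $\rho^\ast$, and
$$\dim\langle E\rangle_A=\operatorname{rank}\rho=\ell(A)-\dim\Ker\rho.$$
The kernel consists of those $f\in L(A)$ whose expansion at each $P$ vanishes up to order $-v_P(A)+r_P-1$. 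Equivalently, $v_P(f)\ge -v_P(A)+r_P$, so $\Ker\rho=L(A-E)$. The whole statement therefore reduces to the identity
$$\dim\langle E\rangle_A=\ell(A)-\ell(A-E),$$
together with estimates of the two terms.

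For (i): since $\deg A\ge 2g-1$, Theorem~\ref{thm:RR} gives $\ell(A)=\deg A+1-g$. The hypothesis gives $\deg(A-E)\ge 2g-1$, so the same theorem applies and $\ell(A-E)=\deg A-\deg E+1-g$. The difference is exactly $\deg E$.

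For (ii): now $\deg(A-E)=2g-1-d$, which lies in $[-2,2g-1]$ because $0\le d\le 2g+1$. Clifford's Theorem then gives
$$\ell(A-E)\le 1+\tfrac{2g-1-d}{2}.$$
Hence
$$\dim\langle E\rangle_A\ \ge\ \deg A+1-g-1-\tfrac{2g-1-d}{2}=\deg A-2g+\tfrac{1+d}{2}.$$
Since $\deg E=\deg A-2g+1+d$, this equals $\deg E-\tfrac{d+1}{2}$, as claimed.

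The only step needing care is the identification $\Ker\rho=L(A-E)$ and the claim that $\langle E\rangle_A=\operatorname{im}\rho^\ast$; this is standard linear algebra once the coordinate description of $\rho$ is written down. A degenerate case to handle separately is $\deg(A-E)=-2$, where $\ell(A-E)=0$; the bound still holds there, either directly or by Clifford. No other obstacle is expected.
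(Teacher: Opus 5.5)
Your proposal is correct and follows essentially the same route as the paper. The paper also identifies $\divi E_A$ with the image of the evaluation map $L(A)\to K^{\deg E}$ (you use truncated local expansions, which is the same map), shows its kernel is $L(A-E)$ to get $\dim\divi E_A=\ell(A)-\ell(A-E)$, and then applies Theorem~\ref{thm:RR} for (i) and Clifford's Theorem to $\ell(A-E)$ for (ii).
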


	\begin{proof}
		Let $P_1, \ldots, P_\sigma$ be the places in the support of $E$ and $r_i \geq 0$ such that $$E = (r_1 + 1) P_1 + \ldots + (r_\sigma + 1) P_\sigma.$$ Define
		\begin{align*}
			\ev : L(A)&\to K^{\deg(E)},\\
			f&\mapsto \big(\ev_A(P_i,m)(f)\big)_{\substack{ 1 \leq i \leq \sigma \\ 0 \leq m \leq r_i}}.
		\end{align*}
		We first prove that $\ker(\ev)=L(A-E)$. 
		
		For $i = 1, \ldots, \sigma$ write $t_i$ for the local parameter $t_{P_i}$ at $P_i$ and let
		$f\in \ker(\ev)$, then for each $i$ we can write the local expansion at $P_i$ as
		$$f=\sum_{n\ge -v_{P_i}(A)} a_{i,n}t_i^n.$$
		The condition $f\in \ker(\ev)$ implies that
		$$a_{i,-v_{P_i}(A)}=a_{i,-v_{P_i}(A)+1}=\cdots=a_{i,-v_{P_i}(A)+r_i}=0.$$
		Hence,
		$$ v_{P_i}(f)\ge -v_{P_i}(A)+r_i+1. $$
		It therefore holds for all $P_i$ that
		$$v_{P_i}\big((f)+A-E\big)\ge 0.$$
		For a place $Q\notin\{P_1,\ldots,P_\sigma\}$, we have
		$v_Q(E)=0$. Since $f\in L(A)$, we find
		$$v_Q\big((f)+A-E\big)= v_Q\big((f)+A\big)\ge 0.$$
		We deduce that $f\in L(A-E)$ and thus, $\ker(\ev) \subseteq L(A-E)$.
		
		Conversely, let $f\in L(A-E)$. Then for each $i \in [1, \sigma]$ it holds that
		$$v_{P_i}(f)+v_{P_i}(A-E)\ge 0.$$
		Since $v_{P_i}(E)=r_i+1$, we get $v_{P_i}(f)\ge -v_{P_i}(A)+r_i+1$.
		Therefore $\ev(P_i,m)(f)=0$ for all $0\le m\le r_i$, and thus, $f\in\ker(\ev)$ and $L(A-E) \subseteq \ker(\ev).$ This proves that indeed $\ker(\ev) = L(A-E)$. 
		
		By the definition of $\ev$ and rank nullity it must now hold that 
		\begin{equation}\label{eq:imev} 
			\dim \divi E_A = \dim \operatorname{Im}(\ev) = \ell(A) - \ell(A-E).
		\end{equation}
		In case (i), we have that $\deg(A)\ge 2g-1+\deg(E)$. Therefore, Theorem \ref{thm:RR} gives
		$$\dim \divi {E}_A = \ell (A)-\ell (A-E)=\deg(E).$$
		In case (ii), we find $\deg(A-E) = 2g-1-d$ and can therefore apply Clifford's Theorem. This gives
		$$ \ell(A-E ) \le 1+\frac{1}{2}{\deg (A-E)}.$$
		Combining this with Equation \eqref{eq:imev} we find
		\begin{align*}
			\dim \divi{E}_A &\ge \frac{\deg(A) + \deg(E)}2 - g =\deg E- \frac{d+1}{2}. 
		\end{align*}
		This proves both statements. 
	\end{proof}
	The argument used in the proof of Proposition \ref{prop:linind} also gives a useful description of the annihilator of a Riemann--Roch space.
	\begin{lemma}\label{lem:rranh}
		Let $E$ be a divisor such that $A - E \geq 0$ and consider $L(E) \subseteq L(A)$. Then 
		$$\Phi_{A}(L(E)) = \langle A - E \rangle_A. $$ 
	\end{lemma}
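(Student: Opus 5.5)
The plan is to reuse the kernel computation from the proof of Proposition \ref{prop:linind}, applied to the effective divisor $E' := A - E \geq 0$. That proof shows that the joint evaluation map
$$\ev : L(A) \to K^{\deg(E')}, \qquad f \mapsto \big(\ev_A(P,m)(f)\big)_{P,\ 0 \le m \le v_P(E')-1},$$
has kernel exactly $L(A - E') = L(E)$. Nothing in that step used a degree hypothesis; only the definition of the evaluation forms and of $L(\cdot)$ was needed.

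First, the inclusion $\langle A-E\rangle_A \subseteq \Phi_A(L(E))$. Every generator $\ev_A(P,m)$ of $\langle A-E\rangle_A$, with $0 \le m \le v_P(A-E)-1$, is one coordinate of the map $\ev$. Since $L(E) = \ker(\ev)$, each such generator vanishes on $L(E)$, and hence so does their span.

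For the reverse inclusion, I would argue by duality rather than by dimension counting, which avoids needing Riemann--Roch. Put $W := \langle A-E\rangle_A \subseteq L(A)^\ast$, and recall the common zero set of a family of forms on $L(A)$. The common zero set of $W$ is exactly $\ker(\ev) = L(E)$. For a subspace $W$ of the dual of a finite-dimensional space, the annihilator of its common zero set is $W$ itself. This is the standard bidual identity, the same fact behind the first item of the Remarks after the definition of $\Phi_A$. So
$$\Phi_A(L(E)) = \Phi_A\big(\{f \in L(A) : \varphi(f) = 0 \ \forall \varphi \in W\}\big) = W,$$
which is the claim.

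As a cross-check, the dimensions also match. We have $\dim \Phi_A(L(E)) = \ell(A) - \ell(E)$, and Equation \eqref{eq:imev} gives $\dim \langle A-E \rangle_A = \ell(A) - \ell(A-(A-E)) = \ell(A) - \ell(E)$. Combined with the first inclusion, this gives equality directly.

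The only real point to take care of is that the kernel identification in Proposition \ref{prop:linind} was written for a divisor parametrised as $(r_i+1)P_i$. One must check that it applies to $E' = A-E$, with the convention that places where $v_P(A-E) = 0$ contribute no forms. This is immediate, because $E'$ is effective and the argument is purely local at each place.
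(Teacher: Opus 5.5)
Your proof is correct and essentially the paper's: the paper also reruns the kernel computation of Proposition \ref{prop:linind} with the roles of $E$ and $A-E$ swapped to get $\langle A-E\rangle_A \subseteq \Phi_A(L(E))$. It then concludes with exactly the dimension count you give as a cross-check, $\dim \langle A-E\rangle_A = \ell(A)-\ell(E) = \dim \Phi_A(L(E))$. Your bidual-identity route is the same finite-dimensional rank--nullity fact, so it does not avoid anything the paper uses, since the paper's count relies only on rank--nullity and not on Riemann--Roch.
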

	\begin{proof}
		We use the same ideas as in the proof of Proposition \ref{prop:linind}, but interchange the divisors $E$ and $A - E$. The proof implies that 
		$$\ev : L(A) \rightarrow K^{\deg(A-E)}$$ has kernel $L(E)$. By definition, this gives us $\langle A - E \rangle_A \subseteq \Phi_A(L(E))$. Furthermore, it also implies that 
		$$\dim \langle A - E \rangle_A = \ell(A) - \ell(E) = \dim \Phi_A(L(E)).$$
		Because all the subspaces are finite dimensional the statement follows.
	\end{proof}
		
	\begin{corollary}\label{cor:representative}
		Suppose $f \in U \subset L(A)$ such that $(f)_\infty = A$ and define $E := (f)_0$. Then, the following statements hold.
		\begin{enumerate}[\normalfont (i)]
			\item Every form in $\Phi_A(U)$ has a representative in $\divi {A}_A$ and one in $\divi {E}_{A}$.
			\item We have $\dim \divi A = \dim \divi A_A + g$ and $\dim \divi E = \dim \divi E_A + g$. 
		\end{enumerate}
	\end{corollary}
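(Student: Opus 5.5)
The plan is to derive both parts from Lemma~\ref{lem:rranh} and the kernel computation in the proof of Proposition~\ref{prop:linind}, using that the two degenerate divisors $0$ and $A-A=0$, respectively $A-E=(f)$, have small Riemann--Roch spaces.

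For (i), first observe that $f\in U$ gives $\Phi_A(U)\subseteq \Phi_A(\langle f\rangle)$, so it suffices to treat the annihilator of the line $\langle f\rangle$. For the representative in $\divi A_A$, apply Lemma~\ref{lem:rranh} with the divisor $0$ in place of $E$. We have $A\ge 0$ since $A=(f)_\infty$, and $L(0)=K$. This gives $\Phi_A(K)=\divi A_A$. Moreover $\divi A_A$ has dimension $\ell(A)-1$, so it is a hyperplane of $L(A)^\ast$. Any $\varphi\in\Phi_A(U)$ with $\varphi(1)=0$ then lies in $\divi A_A$ directly.

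In general $1$ need not lie in $U$, so this is not enough. Instead I would apply the same lemma to $L(A-E')$ for a suitable divisor. Take $E'=A$ and use $\divi{A}_A=\Phi_A(L(0))$. The cleaner route is to note that $\dim\divi A_A=\ell(A)-1$ and to show directly that $\divi A_A+\langle\psi\rangle=L(A)^\ast$ with $\psi\notin\divi A_A$. The real content of (i) is the second claim, so focus there.

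For $E=(f)_0$ we have $(f)=E-A$, hence $L(A-E)=L((f)^{-}...)$. More precisely, $A-E=-(f)$, so $L(A-E)=L(-(f))=f^{-1}\cdot L(0)\cdot$ twisted, which gives $L(A-E)=\{h:(h)\ge (f)\}=Kf$. Here I use that $E\ge0$ and $A-E=-(f)$, so $\deg(A-E)=0$. By Lemma~\ref{lem:rranh} (with $A-(A-E)=E\ge0$), $\Phi_A(Kf)=\divi E_A$. Since $\Phi_A(U)\subseteq\Phi_A(Kf)$, every form in $\Phi_A(U)$ lies in $\divi E_A$ and so has a representative there.

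For the $\divi A_A$ statement, I would use the symmetric argument: $1\in L(A)$ has $(1)_\infty=0$, and $\Phi_A(K)=\divi A_A$ by Lemma~\ref{lem:rranh}. I suspect the intended reading uses $K\subseteq U$, which holds in the paper's setting since $K\subseteq S$. Then $\Phi_A(U)\subseteq\Phi_A(K)=\divi A_A$. This is the step I expect to be delicate, namely whether the hypothesis $1\in U$ is implicitly available. If it is not, I would redo the argument with $f$ replaced by the constant, which requires $1\in U$.

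For (ii), equation~\eqref{eq:imev} gives $\dim\divi E_A=\ell(A)-\ell(A-E)=\ell(A)-1$ and $\dim\divi A_A=\ell(A)-\ell(0)=\ell(A)-1$. Since $\deg E=\deg A$ and $\deg A\ge 2g-1$, Theorem~\ref{thm:RR} gives $\ell(A)=\deg A+1-g$. Combining this with $\dim\divi A=\deg A$ and $\dim\divi E=\deg E=\deg A$ from the preceding remark yields both equalities. This step is routine.
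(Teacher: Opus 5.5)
Your proof is correct. For (i) it is exactly the paper's argument. The paper's proof opens with ``We assumed that $1, f\in U$'', so you are right that $K\subseteq U$ is an implicit hypothesis, coming from the normalization $K\subseteq S$. It is genuinely needed: for $U=Kf$ with $A\neq 0$ one gets $\Phi_A(U)=\divi E_A$, a hyperplane different from $\divi A_A=\Phi_A(K)$. Given $1,f\in U$, Lemma~\ref{lem:rranh} applied to $L(0)=K$ and to $L(A-E)=L(-(f))=Kf$ gives both inclusions, as you say. (The paper's proof writes $L(A-(f))$ and $\divi D_A$ at this point, but your $L(-(f))$ and $\divi A_A$ are what is meant.)

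For (ii) your route is shorter than the paper's. You read off $\dim\divi A_A=\ell(A)-\ell(0)$ and $\dim\divi E_A=\ell(A)-\ell(-(f))$ directly from \eqref{eq:imev}. This is legitimate for every effective divisor, because the kernel computation behind it uses no degree bound. You then apply Theorem~\ref{thm:RR}. The paper instead adjoins an auxiliary place $P\notin\Supp(A)\cup\Supp(E)$ and uses Proposition~\ref{prop:linind} to get $\divi{A+P}_A=L(A)^\ast$. It then removes the single extra generator $\ev_A(P,0)$. Your version makes transparent that the only inputs are $\ell(0)=\ell(-(f))=1$ and $\ell(A)=\deg A+1-g$, so the detour through Proposition~\ref{prop:linind} is not needed.

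In a write-up, delete the abandoned attempts in your second paragraph. Also correct the slip ``$L(-(f))=f^{-1}\cdot L(0)$'': since $L(B-(f))=fL(B)$, it is $fL(0)=Kf$, which is the identification you then actually use.
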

	\begin{proof}
		We assumed that $1, f \in U$, so
		\begin{align*}
			\Phi_{A}(U) &\subseteq \Phi_A(L(0)) = \langle D \rangle_A \qquad \text{ and } \\
			\Phi_{A}(U) &\subseteq \Phi_A(L(A-(f))) = \langle E \rangle_A,
		\end{align*}
		where the equalities follow from Lemma \ref{lem:rranh}. This proves (i). 
		
		For (ii) consider the divisor $A' = A + P$ for $P \not\in \Supp(A)\cup\Supp(E)$. Then the degree of $A'$ equals $\deg(A) + 1$ and hence, by Proposition \ref{prop:linind}, we have 
		$$\dim \divi {A'}_A \geq \deg(A) + 1 - g = \ell(A).$$ 
		This implies that $\divi {A'}_A = L(A)^\ast$. Because $f \in U$ we also have that $f \in L(A)$ and since $P$ is not a zero of $f$, we find $\ev_A(P, 0)(f) \neq 0$. Hence,
		$$\dim \divi A_A = \dim \divi {A'}_A  - 1= \deg A - g = \dim \divi A - g.$$ The statement for $E$ goes analogous. 
	\end{proof}
	
	Given a linear form $\varphi$ in $L(A)^\ast$ and $f \in F$ we can naturally define a linear form $\varphi'$ in $L(A-(f))^\ast$ that has kernel $f\Ker(\varphi)$ by sending $\varphi'(fh)$ to $\varphi(h)$. The following result tells us that the support of these forms can be assumed to be equal and thus, the support is in some sense translation invariant. 
	
	\begin{lemma}\label{lem:translate}
		Let $\varphi\in L(A)^*$ and $f\in F$. Define $A'= A-(f)$ and consider the linear form $\varphi'\in L(A') = fL(A)$ defined by
		\begin{equation*}
			\varphi': fh \mapsto \varphi(h).
		\end{equation*}
		Suppose $\varphi= X_A$ for some expression $X$, then there exists $X'$ such that $\Supp(X') = \Supp(X) $ and $\varphi'=X' _{A'}$.
	\end{lemma}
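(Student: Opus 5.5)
By linearity it suffices to treat a single evaluation form. Write $X=\sum_{P,m}\lambda_{P,m}\ev(P,m)$, so that
$$\varphi=X_A=\sum_{P,m}\lambda_{P,m}\ev_A(P,m).$$
The plan is to show that for each place $P$ and each $m\ge 0$ the form $h\mapsto \ev_A(P,m)(h)$, viewed on $L(A')$ via $fh\mapsto h$, equals a linear combination of the forms $\ev_{A'}(P,j)$ with $0\le j\le m$. Summing these gives an expression $X'$ with $r_P(X')\le r_P(X)$ for every $P$. One needs equality of supports, $\Supp(X')=\divi{D_{X}}$, so we must also check that the top coefficient at $m=r_P(X)$ is nonzero, giving $r_P(X')=r_P(X)$ and hence $D_{X'}=D_X$.

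For the local computation, fix $P$ and the chosen local parameter $t=t_P$. Set $a:=v_P(A)$ and $e:=v_P(f)$, so that $v_P(A')=a-e$. Expand $f=\sum_{i\ge e}c_it^i$ with $c_e\neq 0$, and note that $f^{-1}=\sum_{i\ge -e}d_it^i$ with $d_{-e}=c_e^{-1}\neq 0$. Take $u=fh\in L(A')$ with $u=\sum_{n\ge -(a-e)}u_nt^n$, so that $u_n=\ev_{A'}(P,n+a-e)(u)$. Then $h=f^{-1}u$, and the coefficient of $t^{-a+m}$ in $h$ is
$$\ev_A(P,m)(h)=\sum_{n}d_{-a+m-n}\,u_n,$$
where the sum runs over $-(a-e)\le n$ with $-a+m-n\ge -e$, that is $n\le -a+e+m$. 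Reindexing by $j=n+a-e$ gives
$$\ev_A(P,m)(h)=\sum_{j=0}^{m}d_{m-j-e}\,\ev_{A'}(P,j)(fh),$$
which is a finite combination with $0\le j\le m$. The coefficient at $j=m$ is $d_{-e}\neq 0$. This is the triangular change-of-basis computation already used in Remarks \ref{re:exweight}, now with the shift by $e$ absorbed into the index.

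Define $X'$ by replacing each $\ev(P,m)$ in $X$ with $\sum_{j\le m}d^{(P)}_{m-j-e_P}\ev(P,j)$. For places where $e_P=0$ and $v_P(A)=v_P(A')$, the same formula applies with $f^{-1}$ a unit, so no separate case is needed. At each $P$ the $j=r_P(X)$ coefficient of $X'$ receives a contribution only from $m=r_P(X)$, because all other terms have $m<r_P(X)$. That contribution is $\lambda_{P,r_P}\,d^{(P)}_{-e_P}\neq 0$, so $r_P(X')=r_P(X)$. Only the finitely many places appearing in $X$ are involved, so $X'$ is a genuine finite expression, and $X'_{A'}=\varphi'$ by linearity.

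The only step requiring care is this top-coefficient bookkeeping: one must confirm that no cancellation at the leading index occurs and that the index ranges start at $j=0$. That is precisely the computation above, so I expect no real obstacle.
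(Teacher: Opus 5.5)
Your proof is correct, but it takes a different route from the paper. You compute the translate explicitly. Multiplying local expansions by that of $f^{-1}$ shows that the translate of $\ev_A(P,m)$ is $\sum_{j=0}^{m} d_{m-j-e}\,\ev_{A'}(P,j)$, with leading coefficient $d_{-e}\neq 0$. So at each place, translation acts on the evaluation forms by an upper-triangular matrix with nonzero diagonal, and $r_P(X')=r_P(X)$ follows immediately.

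The paper never expands $f$. It first uses Lemma \ref{lem:rranh}: a form $\psi$ kills $L(A-E)$ if and only if $\psi'$ kills $L(A'-E)$, so $\psi\mapsto\psi'$ maps $\divi{E}_A$ bijectively onto $\divi{E}_{A'}$. This only produces some $X'$ with $\Supp(X')\subseteq\Supp(X)$. To recover the full support, the paper then shows by a rank--nullity count that a relation $Y$ with $D_Y=E$ and $Y_A=0$ forces a relation with the same divisor that vanishes on $L(A')$. It then adds a suitable multiple of such a relation to $X'$.

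Your version is both simpler and stronger. The translate is canonical and defined for every expression, not just for some representative. Moreover, $X\mapsto X'$ is a linear automorphism of each $\divi{E}$ compatible with the two projections. Invariance of weights, and the later remark that $\dim\divi{E}_A$ depends only on the class of $A$, follow directly from this. The paper's approach has the merit of staying inside its annihilator and dimension-counting framework, but it is indirect, and its final adjustment step (choosing the scalar $\lambda$) is only sketched.
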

	\begin{proof}
		Let $E \geq 0$ be a divisor and let $\psi \in \divi E_A$. Define $\psi' \in L(A')^\ast$ analogously to $\varphi'$, that is,
		$$\psi'(fh)=\psi(h).$$
		
		As the first step, we show that the map sending $\psi\in\divi E_A$ to $\psi'\in\divi E_{A'}$ is a bijection. By Lemma \ref{lem:rranh} it holds that $\psi \in \Phi_A(L(A-E))$ which is equivalent to $L(A-E) \subseteq \Ker(\psi)$. Multiplying both sides by $f$ we find that 
		$$L(A' - E) = fL(A - E) \subseteq f\Ker(\psi) = \Ker(\psi').$$ 
		Reversing the same argument, we find that indeed $\psi' \in \divi E_{A'}$. Since $\psi(f^{-1}(fh)) = \psi'(fh)$, we can also get $\psi$ back from $\psi'$, showing that the map sending $\psi$ to $\psi'$ must be a bijection. From this we additionally conclude that $\dim \divi E_A = \dim \divi E_{A'}$. 
		
		As our second step, we prove the claim that if there exists an expression $Y \in \divi E$ such that $D_Y = E$ and $Y_A = 0$, there also exists an expression $Y' \in \divi E$ such that $D_{Y'} = E$ and $Y'_{A'} = 0$.
		Assume this is not the case; that is there is no such $Y'$. Then there must be some $P \in \Supp E$ for which there exists no expression $Z_P \in \divi E$ such that 
		\begin{align}
			Z_{P, A'} = 0 \qquad\text{and}\qquad v_P(D_{Z_P}) = v_P(E), \label{eqn:ZP}
		\end{align}
		indeed, if such a $Z_P$ would exist for all $P \in \Supp(E)$ we can construct $Y'$ by taking a linear combination 
		$$Y':= \sum_P \lambda_{P }Z_P,$$
		contradicting the assumption $D_{Y'} \neq E$. Hence, fix $P$ to be a place for which no $Z_P$ exists satisfying \eqref{eqn:ZP}. Then every element in the kernel of $\divi E\to \divi E_{A'}$ lies in $\divi{E-P}$. Since $\divi{E-P}$ has co-dimension one in $\divi E$, rank-nullity gives $$\dim \divi {E-P}_{A'}=\dim \divi E_{A'} - 1.$$ However, since  $v_P(D_Y) = v_P(E)$ and $Y_D = 0$, we have $\dim \divi {E-P}_A = \dim \divi E_A$. Using also the first step we find
		$$\dim \divi E_{A'} = \dim \divi E_A = \dim \divi {E - P}_A = \dim \divi {E-P}_{A'} = \dim \divi E_{A'} - 1,$$
		a contradiction. This concludes the proof of the claim. 
		
		Now we are ready to consider $\varphi$. By the first step we immediately get that $\varphi' \in \divi {D_X}_{A'}$ and thus is represented by some $X'$ such that $\Supp(X') \subseteq \Supp(X)$. It remains to show that $X'$ can be chosen such that the two supports are equal. Since we have the bijection from $\divi {D_{X'}}_A$ to $\divi {D_{X'}}_{A'}$, we can also find $X'' \in \Supp(X')$ such that $X''_{A} = \varphi = X_A.$ Hence, $(X - X'')_A = 0$. Take $E = D_{X - X''}$, then by the second step we know that there exists some $Y \in \divi E$ such that $D_Y = E$ and $Y_{A'} = 0$. Therefore, for some $\lambda \in K$ we must have $\Supp(X' + \lambda Y) = \Supp(X)$ and $(X' + \lambda Y)_A = \varphi'$. 
	\end{proof}
	
	\begin{remark}
		Lemma \ref{lem:translate} implies that the dimension of $\divi E_A$ does not depend on the exact divisor $A$, but only on the class of the Picard group that $A$ belongs to. 
	\end{remark}
	
	By Lemma \ref{lem:translate} the following is now well-defined.  
	
	\begin{definition}
		For $\varphi \in L(A)^\ast$ and $f \in F$ we define the \textit{translate of $\varphi$ by $f$} as the linear form $\varphi' \in L(A - (f))^\ast$ such that 
		$$\varphi': fh \mapsto \varphi(h).$$ 
		Furthermore, let $X$ be an expression of evaluation forms representing $\varphi$. A \textit{translated expression of $X$ from $\varphi$ to $\varphi'$} is defined as an expression of evaluation forms $X'$ such that $\Supp(X) = \Supp(X')$ and $X'_{A - (f)} = \varphi'$. By slight abuse of notation we will also call $X'$ the \textit{translate of $X$ by $f$}. 
	\end{definition}
	
	As mentioned in Section \ref{Sec:defs}, it is important to make a distinction between expressions and linear forms, because there might be multiple expressions that represent the same linear form. The following result shows that if the weight of  a linear form is sufficiently small, then there is in some sense a canonical expression to choose. This upper bound does not hold for all linear forms, also see Remark \ref{re:exweight}, but in Section \ref{Sec:specific} we will see that it does always hold for linear forms in $\Phi_{2D}(S^2)$. 
	
	\begin{lemma}\label{lem:weightLD}
		Let $\varphi \in L(A)^\ast$ and let $X_1$ and $X_2$ be two distinct expressions that represent $\varphi$. Then 
		$$W(X_1) + W(X_2) \geq \deg A - 2g + 2.$$ 
		In particular, if $$W(X_1) \leq \frac{\deg A - 2g + 1}{2}$$ 
		it is the unique expression representing $\varphi$ such that $W(X_1) = w_A(\varphi)$. 
	\end{lemma}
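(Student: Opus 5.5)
Consider the difference $Y := X_1 - X_2$. This is a nonzero formal expression with $Y_A = 0$. Its divisor satisfies $D_Y \leq D_{X_1} + D_{X_2}$, coefficientwise: indeed $r_P(Y) \leq \max(r_P(X_1), r_P(X_2))$, hence
\[
r_P(Y) + 1 \leq (r_P(X_1)+1) + (r_P(X_2)+1).
\]
Summing over places gives $W(Y) \leq W(X_1) + W(X_2)$. It therefore suffices to show that every nonzero expression $Y$ with $Y_A = 0$ has $W(Y) \geq \deg A - 2g + 2$.

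To prove this, set $E := D_Y$ and argue by contradiction, assuming $\deg E \leq \deg A - 2g + 1$. The expression $Y$ lies in $\divi E$. By Proposition \ref{prop:linind}(i), $\dim \divi E_A = \deg E = \dim \divi E$. So the projection $\divi E \to \divi E_A$, which sends a formal expression to its linear form on $L(A)$, is a surjection between spaces of equal finite dimension, hence injective. Since $Y \neq 0$ and $Y_A = 0$, this is a contradiction. Hence $\deg E \geq \deg A - 2g + 2$, which combined with the first paragraph yields $W(X_1) + W(X_2) \geq \deg A - 2g + 2$. The one point needing care is that $\deg A - 2g + 1 \geq 0$ is not actually needed: if $\deg E$ is small, case (i) applies directly. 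We also use the standing assumption $\deg A \geq 2g-1$, which is exactly the hypothesis under which Proposition \ref{prop:linind}(i) was proved, with $E \geq 0$ automatic since $D_Y \geq 0$.

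For the ``in particular'' statement, suppose $W(X_1) \leq (\deg A - 2g + 1)/2$. Let $X_2 \neq X_1$ be any other expression representing $\varphi$. By the inequality just proved,
\[
W(X_2) \geq \deg A - 2g + 2 - W(X_1) \geq \frac{\deg A - 2g + 3}{2} > W(X_1).
\]
Hence $X_1$ attains the minimum, so $W(X_1) = w_A(\varphi)$, and every other representative has strictly larger weight. This gives the uniqueness. I expect no real obstacle here: the only substantive input is the injectivity of $\divi E \to \divi E_A$ for small $\deg E$, which is exactly Proposition \ref{prop:linind}(i).
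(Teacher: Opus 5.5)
Your proof is correct and follows the paper's argument. The paper takes $E := D_{X_1} + D_{X_2}$ directly, so that $\deg E = W(X_1)+W(X_2)$ and $X_1 - X_2 \in \divi E$, whereas you take $E = D_{X_1 - X_2}$ and bound its degree by $W(X_1)+W(X_2)$; in both versions the only real input is that Proposition~\ref{prop:linind}(i) makes $\divi E \to \divi E_A$ injective when $\deg E \leq \deg A - 2g + 1$, a step you (unlike the paper) spell out explicitly, together with the uniqueness deduction.
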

	\begin{proof}
		Define $E := D_{X_1} + D_{X_2}$, then 
		$$ 0 = (X_1 - X_2)_A \in \divi E_A,$$
		where $X_1 - X_2$ is a non-zero expression. Proposition \ref{prop:linind} implies that $\deg(E) \geq \deg A - 2g + 2$ and thus, 
		$$\deg A - 2g + 2 \leq \deg E \leq W(X_1) + W(X_2).$$
		
		The uniqueness statement follows immediately. 
	\end{proof}
	\subsection{Vanishing on square spaces}
	In the additive case, it is an easy fact that if $m \not\in B+B$, then $|B \cap [0, m]| \leq (m+1)/2$, since for each pair $\{a, m-a\} \subseteq [0, m]$ at most one element can be in $B$. Proposition \ref{prop:m_dimension} and Corollary \ref{cor:m_dimension} generalize this statement to the linear setting, replacing $m$ by the weight of a linear form in $\Phi_{2A}(U^2)$. 
	This shows that the notion of weight really encapsulates the complexity of a linear form.
	
	\begin{proposition}\label{prop:m_dimension}
		Suppose that $U \subseteq L(A)$ is a $K$-subspace, so that
		$U^2 \subseteq L(2A)$.
		Let $\varphi \in \Phi_{2A }(U^2)$ be a form and $X$ an expression of weight $w$ such that $\varphi= X_{2A}$.
		Define
		$$T:=U\cap\bigcap_{\chi\in \divi{D_X} _A}\ker(\chi).$$
		Then we have
		$$\operatorname{codim}_U(T) \le \frac{w}{2}.$$
	\end{proposition}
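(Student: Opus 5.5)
The plan is to turn the vanishing of $\varphi$ on $U^2$ into the statement that a certain subspace is totally isotropic for a nondegenerate symmetric bilinear form on a $w$-dimensional space. The bound $\operatorname{codim}_U(T)\le w/2$ then follows from linear algebra.

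\emph{Step 1: local multiplication rule.} Write $X=\sum_{P,m}\lambda_{P,m}\ev(P,m)$ and $r_P:=r_P(X)$, so that $D_X=\sum_P (r_P+1)P$ and $w=\sum_P (r_P+1)$. Fix a place $P$ with local parameter $t$ and set $a:=v_P(A)$. For $f,h\in L(A)$ we expand $f=t^{-a}\sum_{i\ge0} f_i t^i$ and $h=t^{-a}\sum_{j\ge0}h_jt^j$, where $f_i=\ev_A(P,i)(f)$. Then $fh=t^{-2a}\sum_m\big(\sum_{i+j=m}f_ih_j\big)t^m$, and since $v_P(2A)=2a$ this gives
$$\ev_{2A}(P,m)(fh)=\sum_{i+j=m}\ev_A(P,i)(f)\,\ev_A(P,j)(h).$$

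\emph{Step 2: factor the bilinear form.} Consider the evaluation map $\varepsilon:U\to K^{w}$, $f\mapsto(\ev_A(P,i)(f))_{P,\,0\le i\le r_P}$. By definition of $\divi{D_X}_A$, its kernel is exactly $T$, so $\dim\varepsilon(U)=\operatorname{codim}_U(T)$. Define a symmetric bilinear form $\beta$ on $K^w=\bigoplus_P K^{r_P+1}$ as an orthogonal sum of blocks. The block at $P$ has matrix $M_P=(\lambda_{P,i+j})_{0\le i,j\le r_P}$, with the convention $\lambda_{P,m}=0$ for $m>r_P$. By Step 1, for $f,h\in U$ we get
$$\varphi(fh)=X_{2A}(fh)=\beta(\varepsilon(f),\varepsilon(h)).$$
The hypothesis $\varphi\in\Phi_{2A}(U^2)$ therefore says that $V:=\varepsilon(U)$ is totally isotropic for $\beta$, i.e.\ $V\subseteq V^{\perp}$.

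\emph{Step 3: nondegeneracy and conclusion.} Each $M_P$ is a Hankel matrix whose entries vanish strictly below the antidiagonal $i+j=r_P$. On that antidiagonal every entry equals $\lambda_{P,r_P}$, which is nonzero by the definition of $r_P$. Hence $\det M_P=\pm\lambda_{P,r_P}^{\,r_P+1}\neq0$, so $\beta$ is nondegenerate. Consequently $\dim V^\perp=w-\dim V$, and $V\subseteq V^\perp$ gives $2\dim V\le w$, i.e.\ $\operatorname{codim}_U(T)\le w/2$. This argument uses only symmetry and nondegeneracy, so it works in every characteristic, including $2$.

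The main point needing care is Step 1: the normalisation of $\ev_A$ by $v_P(A)$ must match that of $\ev_{2A}$ by $2v_P(A)$, so that no shift appears in the convolution. Everything after that is the Hankel-determinant observation together with the standard isotropic-subspace bound.
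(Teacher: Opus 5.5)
Your proof is correct, and it takes a genuinely different route from the paper's. The paper argues by contradiction. It orders the generators $\ev_A(P_i,j)$ of $\divi{D_X}_A$ lexicographically and greedily selects pivots $\psi_1,\dots,\psi_x$, together with elements $u_\ell\in U$ of prescribed valuation at $P_{i_\ell}$. It then forms the ``complementary'' set $\Gamma_2=\{\ev_A(P,r_P-j)\}$ and uses the count $w-x<x$ to find $u_0$ with $\varphi(u_0u_\ell)=\lambda\neq 0$, where only the top coefficient $\lambda_{P_{i_0},r_{i_0}}$ contributes. You instead observe that $(f,h)\mapsto\varphi(fh)$ factors through the evaluation map $\varepsilon$ as a block-Hankel bilinear form. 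Each block is anti-triangular with $\lambda_{P,r_P}\neq 0$ on the antidiagonal, so the bound reduces to the standard fact that a totally isotropic subspace for a nondegenerate form on $K^w$ has dimension at most $w/2$. Both arguments rest on the same phenomenon. Your antidiagonal pairing $i\leftrightarrow r_P-i$ is exactly the paper's $\Gamma_1\leftrightarrow\Gamma_2$ correspondence, the linear analogue of Freiman's pairing $\{a,m-a\}$. The paper's construction is essentially a hands-on proof that such an anti-triangular form admits no isotropic subspace of dimension greater than $w/2$. Your version is shorter and avoids the pivot bookkeeping: the existence of the $u_\ell$ and the maximality/minimality choices of $i_0,j_0$. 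It also makes Remark~\ref{rem:reform} immediate, since $\dim\operatorname{Im}(\rho_X^U)=\dim\varepsilon(U)$. The paper's version has the advantage of producing an explicit product $u_0u_\ell\in U^2$ on which $\varphi$ does not vanish, which keeps it close to the additive argument that motivates it.
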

	\begin{proof}
		When $\dim (U) \leq \frac w 2$ the statement is trivial, so we assume that $\dim U > \frac w2$. The rest of the lemma we prove by contradiction, so suppose that $\codim_U(T) = x > w/2$. 
		
		Write $\operatorname{Gen}_A(X)$ for the set of generating evaluation forms $\ev_A(P, j)$ of $\divi {D_X}_A$, that is
		$$ \Gen_{A}(X)= \{\ev_{A}(P_i,j):1\le i\le \sigma,\ 0\le j\le r_i\} $$
		where $r_i= r_{P_i} (X)$. We can then equivalently define $T = U \cap \bigcap_{\chi \in \Gen_{A}(X)} \ker(\chi).$
		
		We order $\Gen_A(X)$ lexicographically: first by the index $i$ of the places and then by the index $j$, both in increasing order. This order is denoted by $\leq$. For example, we have $\ev_A(P_1, 1) \leq \ev(P_1, 3) \leq \ev(P_2, 0).$ 
		
		Choose elements 
		$$\psi_1, \ldots , \psi_x \in \Gen_A(X)  $$
		inductively as follows
		\begin{enumerate}[1.]
			\item Set $U_0:=U$.
			\item For each $1\le \ell \le x$, let $\psi_\ell$ be the smallest element of $\Gen_A(X)$ with respect to $\leq$ such that
			$$
			U_\ell:=U \cap\bigcap_{\chi\in\Gen_A(X), \chi\le \psi_\ell }\ker(\chi)
			\subsetneq U_{\ell-1}
			$$
			and $\operatorname{codim}_U(U_\ell )=\ell$.
		\end{enumerate}
		For $\ell = 1, \ldots, x$, define $i_\ell$ and $j_\ell$ such that
		$$\psi_{\ell}=\ev_{A}(P_{i_\ell},j_\ell).$$
		Then $T = U \cap \bigcap_{\ell = 1}^x \ker(\psi_\ell)$.
		We choose $u_\ell \in U_{\ell-1} \setminus U_\ell $ so that 
		\begin{equation} \label{eq:ul}
			\ev_A(P_i,j)(u_\ell) =\begin{cases}
				1&\text{if } i=i_\ell \qquad \text{and} \qquad j=j_\ell;\\
				0&\text{if } i= i_\ell \qquad\text{and}\qquad  j< j_\ell;\\
				0&\text{if } i< i_\ell.
			\end{cases}
		\end{equation} 
		Then in particular we have that
		\begin{align}
			v_{P_{i_\ell} } (u_\ell) =-v_{P_{i_\ell}} (A) +j_\ell .
			\label{eq:s_u}
		\end{align}
		Now, we define 
		\begin{alignat*}{2}
			& \Gamma_1 := \{\psi_1, \ldots, \psi_x\}, &&\\
			&\Gamma_2:=\{\ev_A( P, j) \in \Gen_A(X) \mid \ev_A (P,r_P(X) - j)\in \Gamma_1\}, &&\\
			& B:= \Gen_A(X) \setminus \Gamma_2 \quad \text{ and }&&\\
			& V := U \cap \bigcap_{\chi\in B}\ker(\chi). &&
		\end{alignat*}
		Then $|B|=w-x<x $ and thus, $\dim V >\dim T.$ Let $u_0 \in V\setminus T$ and note that $T = V \cap \bigcap_{\chi \in \Gamma_2} \ker(\chi)$. Because $u_0\notin T$, there must be some element $\psi_0 \in \Gamma_2$ such that $\psi_0(u_0) \neq 0$. Hence, we can take $i_0$ maximal such that for some $j$ we have that
		$$\ev_A(P_{i_0},j )\in \Gamma_2 \qquad\text{and}\qquad \ev_A(P_{i_0},j)(u_0)\neq 0.$$
		We define $j_0$ to be the minimal $j$ for which this holds. Then, in particular, we have 
		\begin{align}
			v_{P_{ i_0} } (u_0)  = -v_{P_{i_0}}(A) +j_0    \label{eq:d}.
		\end{align}
		Note that by the construction of $\Gamma_2$, there exists $\ell$ such that 
		\begin{equation}\label{eq:evpsi} \psi_\ell=\ev_{A}(P_{i_0},r_{i_0}-j_0) \in \Gamma_1.
		\end{equation}
		From now on we fix $\ell$ such that \eqref{eq:evpsi} holds, that is $i_\ell = i_0$ and $j_\ell = r_{i_0} - j_0$. 
		
		After possibly normalizing $u_0$, we claim that for $i \in [1, \sigma]$ and $j \in [0, r_i]$ we have
		$$ \ev_{ 2A}(P_i,j) (u_0 u_\ell) = \begin{cases}
			1 &\quad \text{if}\quad i=i_0 \quad\text{and}\quad j=r_{i_0};\\0& \quad\text{otherwise}.
		\end{cases}$$
		
		When $i \neq i_0$ we can use the maximality of $i_0$ and Equation \eqref{eq:ul} to find
		\begin{align*}\ev_A(P_i,j) (u_0) =0 
			&\quad\text{if}\qquad i> i_0 \qquad \text{ and } \qquad  j\le r_i,\\
			\ev_A(P_i,j) (u_\ell) =0 &\quad\text{if}\qquad i< i_0 \qquad \text{ and } \qquad j\le r_i.
		\end{align*}
		This implies that $v_{P_i}(u_0 u_\ell) \geq -v_{P_i}(2A) + r_i + 1$ for these $i$ and thus, the claim for $i \neq i_0$ must hold.
		
		When $i = i_0$, by Equations \eqref{eq:s_u} and \eqref{eq:d}, 
		we have 
		$$ v_{P_{i_0}}(u_0 u_\ell) = -v_{P_{i_0}}(2A)+ r_{i_0}.  $$ 
		This directly implies the claim for $j < r_{i_0}$ and because we are allowed to normalize $u_0$ we also get the result for $j = r_{i_0}$. 
		
		Finally, recall that $\varphi$ has the following decomposition
		$$\varphi = \lambda \ev_{2A} (P_{i_0} ,  r_{i_0}) + \sum_{ (i,j)\neq (i_0,r_{i_0})}\lambda_{i j} \ev_{2A}(P_i,j ), $$
		for some non-zero $\lambda$. Hence, evaluating $\varphi$ at
		$u_0 u_\ell \in U^2$ we get
		$$\varphi (u_0 u_\ell) = \lambda + 0\neq 0. $$
		This contradicts the fact that $\varphi\in \Phi_{2A}(U^2).$ We conclude that $x \leq \frac w2$, as wished.
	\end{proof}
	
	\begin{remark} \label{rem:reform}
		The following equivalent formulation of Proposition \ref{prop:m_dimension} will be useful in the proof of Corollary \ref{cor:m_dimension} below. Consider the restriction map
		\begin{align*}
			\rho_X^U:\divi{D_X}_A &\longrightarrow U^*, \\
			\psi &\longmapsto \psi|_U .
		\end{align*}
		Let $T \subseteq U$ be the space that is annihilated by all forms in $\operatorname{Im}(\rho_X^U)$, that is
		$$T= U\cap\bigcap_{\psi\in\divi{D_X}_A }\ker(\psi). $$
		We get
		$$ \dim U^\ast = \dim U= \dim \left(\operatorname{Im}(\rho_X^U) \right)+ \dim T,  $$
		and hence we obtain
		$$\dim \left(\operatorname{Im} (\rho_X^U ) \right) = \operatorname{codim}_U (T). $$
		Thus Proposition \ref{prop:m_dimension} bounds the number of independent linear conditions on $U$ imposed by $\divi{D_X}_A$, that is
		$$\dim \left(\operatorname{Im}(\rho_X^U)\right) \leq \frac{W(X)}{2}. $$
	\end{remark}
	
	\begin{corollary}\label{cor:m_dimension}
		Let $X$ be an expression of weight $W(X) = w$ such that $X_{2A} \in \Phi_{2A}(U^2)$.
		\begin{enumerate}[\normalfont (i)]
			\item Suppose that $w \leq  \deg(A) -2g +1 $. Then we have
			$$ \dim (\divi{D_X}_A  \cap \Phi_{A} (U) )\ge w/2. $$
			\item Suppose that  $w = \deg(A) -2g +1+d $ for some $d \in \{0,\ldots, 2g +1\}$.  Then we have
			$$ \dim( \divi{D_X}_A  \cap \Phi_{A} (U))\ge \frac{w}{2}-\frac{d+1}{2}.$$ 
		\end{enumerate}
	\end{corollary}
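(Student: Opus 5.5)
The plan is to combine Proposition \ref{prop:m_dimension}, in the reformulation of Remark \ref{rem:reform}, with the lower bounds on $\dim \divi{D_X}_A$ from Proposition \ref{prop:linind}, applied with $E = D_X$. The key observation is that $\divi{D_X}_A \cap \Phi_A(U)$ is precisely the kernel of the restriction map
$$\rho_X^U:\divi{D_X}_A \to U^\ast,\qquad \psi\mapsto \psi|_U .$$
Indeed, a form $\psi \in \divi{D_X}_A$ lies in $\Phi_A(U)$ if and only if $\psi|_U = 0$. Rank–nullity then gives
$$\dim\bigl(\divi{D_X}_A \cap \Phi_A(U)\bigr) = \dim \divi{D_X}_A - \dim \operatorname{Im}(\rho_X^U).$$

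The second term is handled by Remark \ref{rem:reform}, which is a restatement of Proposition \ref{prop:m_dimension}. Since $X_{2A} \in \Phi_{2A}(U^2)$ and $W(X) = w$, we have $\dim \operatorname{Im}(\rho_X^U) = \codim_U(T) \le w/2$. Hence
$$\dim\bigl(\divi{D_X}_A \cap \Phi_A(U)\bigr) \ge \dim \divi{D_X}_A - \frac w2 .$$

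It remains to bound $\dim \divi{D_X}_A$ from below, using $\deg D_X = W(X) = w$.
In case (i), $w \le \deg A - 2g + 1$, so Proposition \ref{prop:linind}(i) gives $\dim \divi{D_X}_A = w$. The intersection therefore has dimension at least $w - w/2 = w/2$.
In case (ii), $w = \deg A - 2g + 1 + d$ with $0 \le d \le 2g+1$, so Proposition \ref{prop:linind}(ii) gives $\dim \divi{D_X}_A \ge w - (d+1)/2$. The intersection therefore has dimension at least $w/2 - (d+1)/2$.

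A hypothesis check is needed at this point. Proposition \ref{prop:linind} is stated for a divisor $A$ with $\deg A\ge 2g-1$, which is the standing assumption of this section. Proposition \ref{prop:m_dimension} is applied with the same $A$, and with $\varphi = X_{2A}$, which lies in $L(2A)^\ast$ as required.

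There is no real obstacle once the reformulation is available. The only point that needs care is that the weight $w$ in the corollary is the weight $W(X)$ of the specific expression $X$, not the minimal weight $w_{2A}(\varphi)$. This is exactly the quantity that appears both in Proposition \ref{prop:m_dimension}, which is stated for any representing expression of weight $w$, and as $\deg D_X$ in Proposition \ref{prop:linind}. So the two bounds combine directly.
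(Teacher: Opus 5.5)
Your proposal is correct and matches the paper's proof essentially line for line. Like the paper, you identify $\divi{D_X}_A \cap \Phi_A(U)$ as $\ker(\rho_X^U)$ and apply rank--nullity with the bound $\dim \operatorname{Im}(\rho_X^U) \le w/2$ from Remark \ref{rem:reform}, then invoke Proposition \ref{prop:linind}(i) and (ii) with $E = D_X$, $\deg D_X = w$.
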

	\begin{proof}
		Let $\rho_X^U$ be defined as Remark \ref{rem:reform}. Note that 
		$$\ker (\rho_X^U)=  \divi{D_X}_A  \cap \Phi_A (U) .$$
		By rank-nullity, we get
		\begin{align*}
			\dim\bigl(\divi{D_X}_A\cap \Phi_A(U)\bigr)
			= \dim \divi{D_X}_A-\dim \operatorname{Im}(\rho_X^U)  \geq \dim \divi{D_X}_A-\frac{w}{2}.
		\end{align*}
		
		If $w\le \deg(A) -2g+1 $, by Proposition \ref{prop:linind}(i), we have
		$$\dim \divi{D_X}_A =\deg ({D_X} )=w. $$
		Thus, we get
		$$ \dim\bigl(\divi{D_X}_A\cap \Phi_A(U)\bigr)\ge w/2$$
		as desired. This concludes the first part. 
		
		If $ w =  \deg(A) -2g +1+d $, by Proposition \ref{prop:linind}(ii), we have
		$$\dim \divi{D_X}_A \ge \deg (D_X ) -\frac{d+1}{2} =w -\frac{d+1}{2}.$$
		This concludes the second part, as we have
		\begin{equation*} \dim\bigl(\divi{D_X}_A\cap \Phi_A(U)\bigr)\ge \frac w2 -\frac{d+1}{2}. \qedhere \end{equation*}
	\end{proof}
	
	The result has the following consequence. 
	\begin{corollary} \label{cor:weight2}
		Let $U \subseteq F$ be a finite dimensional $K$-subspace, such that $A$ is the minimal divisor for which $U \subseteq L(A)$. Then any $\varphi \in \Phi_{2A}(U^2)$ has weight at least $2$. 
	\end{corollary}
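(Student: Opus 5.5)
We argue by contradiction: suppose some nonzero $\varphi \in \Phi_{2A}(U^2)$ has $w_{2A}(\varphi) \leq 1$. Weight $0$ is impossible, since the only expression of weight $0$ is the zero expression, which represents the zero form. So we may take an expression $X$ with $W(X) = 1$ and $\varphi = X_{2A}$. Such an expression is a nonzero scalar multiple of $\ev(P, 0)$ for a single place $P$, and $D_X = P$.

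The plan is to apply Proposition \ref{prop:m_dimension} with $w = 1$. It gives $\codim_U(T) \leq 1/2$, hence $T = U$. Here
$$T = U \cap \ker \ev_A(P, 0),$$
since $\divi{D_X}_A$ is spanned by $\ev_A(P,0)$ alone. Therefore $\ev_A(P,0)$ vanishes on all of $U$.

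By Lemma \ref{lem:rranh}, applied with $E = A - P$, we have $\Phi_A(L(A-P)) = \divi{P}_A$. In words, $\ev_A(P,0)(f) = 0$ exactly when $f \in L(A - P)$. Concretely, vanishing of the coefficient of $t_P^{-v_P(A)}$ means $v_P(f) \geq -v_P(A) + 1$. Hence $U \subseteq L(A-P)$, which contradicts the minimality of $A$. We conclude that every nonzero $\varphi \in \Phi_{2A}(U^2)$ has weight at least $2$. The statement should be read for nonzero forms, since the zero form trivially lies in the annihilator and has weight $0$.

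The only point needing care is the case $v_P(A) = 0$ with $P \notin \Supp A$. There $A - P$ is not effective, so Lemma \ref{lem:rranh} as stated does not apply. Instead I would argue directly from the definition of $\ev_A(P,0)$ as above: $U \subseteq L(A-P)$ still follows, and $A - P$ is a strictly smaller divisor with $U$ inside its Riemann--Roch space, again contradicting minimality. Alternatively, in this case $\ev_A(P,0)(1) = 1 \neq 0$ whenever $1 \in U$. I expect no real obstacle; the whole argument is a direct specialization of Proposition \ref{prop:m_dimension} to $w = 1$.
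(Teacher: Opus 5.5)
Your proof is correct and is essentially the paper's argument: the paper invokes Corollary \ref{cor:m_dimension}, which is derived from Proposition \ref{prop:m_dimension}, to get $\ev_A(P,0) \in \Phi_A(U)$, and then concludes $U \subseteq L(A-P)$, contradicting the minimality of $A$; applying the Proposition directly with $w=1$, as you do, comes to the same thing. Your extra worry about the case $P \notin \Supp A$ is unnecessary, since Lemma \ref{lem:rranh} only requires $A - E = P \geq 0$ and not $E \geq 0$, so it applies there as well.
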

	\begin{proof}
		Suppose that there is some $\varphi$ of weight $1$ in $\Phi_{2A}(U^2)$. Then $\varphi = \lambda \ev(P, 0)$ for some place $P$ and $\lambda \in K^\times$. By Corollary \ref{cor:m_dimension} we find that $\ev(P, 0) \in \Phi_{A}(U)$. This implies that $U \subseteq L(A-P)$, contradicting the minimality of $A$. 
	\end{proof}
	
	\subsection{Bounded weight}
	The goal of this subsection is to prove Proposition \ref{prop:max_weight}, which gives an upper bound on the weight $w_A(\varphi)$ of a form in $L(A)^\ast$. The method used to prove this is quite different from the results we saw before and we will need some notions from algebraic geometry about secant varieties. Furthermore, in this subsection (and only this subsection) some of the notation deviates from the notation in the rest of the paper.
	
	\begin{definition}
		Let $X \subseteq \mathbb{P}^M$ be an embedded projective variety over $K$ and $k \in \Z_{\geq 0}$. Define 
		$$S_k^0(X) := \{ u \in \langle P_0, \ldots, P_k \rangle \mid P_i \in X, \text{ distinct} \}.$$ 
		Then the Zariski closure of $S_k^0(X)$ is the \textit{$k$th-secant variety of $X$} denoted by $S_k(X)$. 
	\end{definition}
	
	Using this definition we have the following result by Lange for curves.
	
	\begin{theorem}[{\cite{Lange1984Secants}}]\label{Thm:secantk}
		Let $X \subseteq \mathbb P^M$ be an irreducible non-degenerate curve. Then the variety $S_k(X)$ has dimension $\min(2k + 1, M).$ 
	\end{theorem}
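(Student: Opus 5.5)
The plan is to prove the two inequalities $\dim S_k(X)\le\min(2k+1,M)$ and $\dim S_k(X)\ge\min(2k+1,M)$ separately, the second by induction on the ambient dimension $M$ using projection from a general point of $X$.

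\emph{Upper bound.} Consider the incidence variety
$$I_k=\{(P_0,\ldots,P_k,u)\mid P_i\in X,\ u\in\langle P_0,\ldots,P_k\rangle\}\subseteq X^{k+1}\times\mathbb P^M.$$
Over the open set of distinct tuples, the fibre of $I_k\to X^{k+1}$ is a linear space of dimension at most $k$. Hence the closure of that part of $I_k$ has dimension at most $(k+1)+k=2k+1$. Its image under the second projection is dense in $S_k(X)$, so $\dim S_k(X)\le 2k+1$. Trivially $\dim S_k(X)\le M$ as well.

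\emph{Lower bound.} For $M=1$ the curve is $\mathbb P^1$ and everything is clear, so assume $M\ge 2$ and induct on $M$. Fix a general point $p\in X$ and let $\pi_p:\mathbb P^M\dashrightarrow\mathbb P^{M-1}$ be the projection from $p$. The image $X'=\overline{\pi_p(X)}$ is an irreducible curve (it is not a point since $X$ is not a line) and is non-degenerate in $\mathbb P^{M-1}$. For general points $P_1,\dots,P_k\in X$, the span of $\pi_p(P_1),\dots,\pi_p(P_k)$ is the projection of $\langle p,P_1,\ldots,P_k\rangle$. This gives the key relation
$$\overline{\pi_p(S_k(X))}\supseteq S_{k-1}(X') \qquad\text{and}\qquad J(p,S_{k-1}(X))\subseteq S_k(X),$$
where $J$ denotes the cone (join) with vertex $p$.

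\emph{Induction step.} I would argue by induction on $k$ that $\dim S_k(X)\ge\min(2k+1,M)$, with base case $S_0(X)=X$ of dimension $1$. Suppose the bound holds for $k-1$. If $S_{k-1}(X)=\mathbb P^M$ there is nothing to prove, so assume it is a proper subvariety. I aim to show
$$\dim S_k(X)\ge\min(\dim S_{k-1}(X)+2,\,M).$$
First, $S_k(X)$ contains the cone $J(p,S_{k-1}(X))$. Since $p$ is a general point of $X$ and $X\not\subseteq$ every proper $S_{k-1}$-cone, this cone has dimension $\dim S_{k-1}(X)+1$; more carefully, one can use $\pi_p$ and the induction hypothesis on $M$ applied to $S_{k-1}(X')$.

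Now suppose $\dim S_k(X)=\dim S_{k-1}(X)+1$. Then $S_k(X)$ equals the irreducible cone $J(p,S_{k-1}(X))$ for a general, hence every, $p$ in a dense subset of $X$. So every such $p$ is a vertex of $S_k(X)$. The vertex set of an irreducible variety is a linear subspace, so it contains the linear span of $X$, which is $\mathbb P^M$ by non-degeneracy. A variety with vertex $\mathbb P^M$ is $\mathbb P^M$ itself, giving $S_k(X)=\mathbb P^M$. In either case the dimension jumps by $2$ or reaches $M$, which yields $\min(2k+1,M)$.

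\emph{Main obstacle.} The delicate step is justifying that $p$ is a vertex of $S_k(X)$ when the dimension only grows by one. One needs $J(p,S_{k-1}(X))$ to be an irreducible component of, hence equal to, $S_k(X)$ for $p$ ranging over a dense set, and then that "vertex" is a closed linear condition. I would handle this by working with the irreducible incidence variety $I_k$, which makes $S_k(X)$ irreducible, so that equality of dimensions of an irreducible closed subvariety forces equality. This must be done without any separability or characteristic assumptions, since $K$ is only algebraically closed. The purely set-theoretic cone argument avoids tangent spaces (Terracini), and I would use it precisely so that strange curves in positive characteristic cause no trouble.
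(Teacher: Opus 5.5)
The paper gives no proof of this statement: it quotes Lange and uses the result only as a black box in Proposition~\ref{prop:max_weight}. Your argument is a correct, self-contained proof along classical lines. The incidence-variety count gives $\dim S_k(X)\le\min(2k+1,M)$. The cone/vertex argument gives $\dim S_k(X)\ge\min(\dim S_{k-1}(X)+2,M)$, and iterating from $\dim S_0(X)=1$ finishes the proof. Compared with the bare citation, your route never uses tangent spaces or Terracini's lemma, so it is valid in every characteristic. That is the generality the paper actually needs, since $K$ is only assumed algebraically closed. The citation, for its part, buys brevity.

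Two steps should be tightened.

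First, the phrase ``$X\not\subseteq$ every proper $S_{k-1}$-cone'' should be replaced by the same vertex argument you use later. $J(p,Y)$ is the cone over $\overline{\pi_p(Y\setminus\{p\})}$, so $\dim J(p,Y)=\dim Y+1$ unless $p$ is a vertex of $Y$. Since $S_{k-1}(X)\ne\mathbb P^M$, its vertex set is a proper linear subspace and cannot contain the non-degenerate curve $X$. Hence a general $p\in X$ is not a vertex of $S_{k-1}(X)$. With this, the projection $\pi_p$ and the induction on $M$ are superfluous and can be dropped.

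Second, the linearity of the vertex set deserves a characteristic-free justification. Let $\hat Z$ be the affine cone over a closed $Z\subseteq\mathbb P^M$. The affine cone over the vertex set of $Z$ is $\{v\mid \hat Z+v\subseteq\hat Z\}$, which is visibly closed under addition and scaling. Since linear subspaces are closed, containing a dense subset of $X$ already forces the vertex set to contain $\langle X\rangle=\mathbb P^M$.
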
 
	
	There exists an alternative equivalent definition (see for example \cite[Propostion 6.8]{Buczynski2017FinSchemesSecantVar}) of $k$th-secant varieties using finite subschemes. The advantage of this definition is that we do not have to take a closure and it is therefore immediately clear what the variety consists of. We first recall the definition of a finite subscheme and then explain how to interpret the other definition when $X$ is a smooth curve. 
	
	\begin{definition}
		Let $X$ be a variety over $K$. A \textit{finite subscheme} $H$ of $X$ is a zero-dimensional subscheme of finite length. The scheme parameterizing all finite subschemes of length $k \in \Z_{\geq0}$ is called the \textit{Hilbert scheme of $k$ points} denoted by $\text{Hilb}^{k}(X)$. 
	\end{definition}
	
	\begin{definition}
		Let $X \subseteq \mathbb P^M$ be an embedded projective variety over $K$ and $k \in \Z_{\geq 0}$. The \textit{$k$th-secant variety of $X$} is defined as  
		$$S_k(X) := \bigcup_{H \in \operatorname{Hilb}^{k+1}(X)} \langle H \rangle,$$
		where $\langle H \rangle$ is the smallest projective linear space in $\mathbb P^M$ containing $H$.
	\end{definition}
	
	On a smooth curve $X$, a finite subscheme $H$ of length $k$ is essentially a collection of $k$ not necessarily distinct points together with infinitesimal information at these points. More precisely, suppose $X_{\text{aff}}$ is a suitable affine chart containing $H$ as a closed subscheme. Write $\Gamma(X_{\text{aff}})$ for the coordinate ring of $X_{\text{aff}}$, then this infinitesimal information is encoded in the coordinate ring $R_H$ of $H$ and the surjective map $R_H \rightarrow \Gamma(X_{\text{aff}})$.
	
	In particular, for $H = \{m_1 P_1, \ldots, m_\sigma P_\sigma\}$ we have $$R_H = \Gamma(X_{\text{aff}}) / I_H \qquad \text{ where } \qquad I_H = \mathfrak m_{P_1}^{m_1} \cap \ldots \cap \mathfrak m_{P_\sigma}^{m_\sigma}.$$ 
	Fix $i$ and write $P = P_i, m = m_i$, then locally at the point $P$ we have
	$$ I_{H, P} = \mathfrak m_P^{m} = (t_P^m),$$
	where $t_P$ is the local parameter at $P$. We will use this to deduce an explicit expression for $\langle H \rangle.$
	
	When $X$ is embedded in projective space $\mathbb P^M$ the space $\langle H \rangle$ equals the following
	$$\langle H \rangle = \bigcap_{\substack{L \text{ a hyperplane} \\ H \subseteq L \subseteq \mathbb P^M}} L.$$ 
	Let $L$ be a hyperplane in $\mathbb P^M$ then it is described by a linear form $h_L$. When we restrict $h_L$ to $X_{\text{aff}}$ the coordinate ring of $L\cap X_{\text{aff}}$ is $\Gamma(X_{\text{aff}})/(h_L)$. It now holds that $H \subseteq L$ if and only if $h_L \in I_H$. We can translate this to a collection of local statements, namely 
	$$H \subseteq L \quad \textup{ if and only if } \quad h_{L} \in (t_P^m) \text{ for all $P$ occurring in $H$.}$$
	Note that $h_{L} \in (t_P^m)$ is equivalent to saying $h_L(P), D^{(1)}h_L(P) \ldots, D^{(m-1)}h_L(P) = 0$, where $D^{(i)}$ denotes the $i$th Hasse derivative. 
	
	Suppose the embedding of $X$ in projective space is given by $P \mapsto (f_0(P): f_1(P) : \ldots : f_M(P))$ for some functions $f_i$. Define $$P^{(i)} := (D^{(i)} f_0(P) : \ldots : D^{(i)}f_M(P)).$$ 
	For any linear form $h$ we have $D^{(i)} h (P) = h(P^{(i)})$. Hence, we find that $H \subseteq L$ if and only if $$P_1^{(0)}, \ldots, P_1^{(m_1-1)}, \ldots, P_\sigma^{(0)}, \ldots, P_\sigma^{(m_\sigma-1)} \in L.$$ We conclude that 
	$$\langle H \rangle = \langle P_1^{(0)}, \ldots, P_1^{(m_1-1)}, \ldots, P_\sigma^{(0)}, \ldots, P_\sigma^{(m_\sigma-1)} \rangle.$$
	
	We are now ready to prove the main result of this section. 
	\begin{proposition}\label{prop:max_weight}
		Suppose that $\deg(A) \geq 2g + 1$ and let $\varphi \in L(A)^\ast$. The weight of $\varphi$ is at most 
		$$ \left \lceil \frac{ \deg A - g + 1 }2\right \rceil.$$ 
	\end{proposition}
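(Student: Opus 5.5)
We will use the secant variety machinery just set up. Put $M := \ell(A) - 1 = \deg A - g$, which holds by Theorem \ref{thm:RR} since $\deg A \geq 2g+1$. Choose a basis $f_0, \ldots, f_M$ of $L(A)$. Since $\deg A \geq 2g+1$, the complete linear system $|A|$ is very ample, so $P \mapsto (f_0(P) : \ldots : f_M(P))$, suitably twisted at the points of $\Supp(A)$ (equivalently, using $t_P^{v_P(A)} f_i$ locally), embeds the smooth curve $X$ of $F$ into $\mathbb P^M = \mathbb P(L(A)^\ast)$. The image is irreducible and non-degenerate, since no nonzero $f \in L(A)$ vanishes identically.

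Under this identification a point of $\mathbb P^M$ is a nonzero linear form $\varphi \in L(A)^\ast$ up to scalar. The image of the place $P$ is exactly the class of $\ev_A(P,0)$. More generally, by Remark \ref{rem:hasse} the osculating points $P^{(i)}$ defined above correspond to $\ev_A(P,i)$, because $D^{(i)}(t_P^{v_P(A)} f)(P) = \ev_A(P,i)(f)$.

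By the explicit description of $\langle H \rangle$ for $H = \{m_1P_1, \ldots, m_\sigma P_\sigma\} \in \operatorname{Hilb}^{k+1}(X)$, we get
$$\langle H \rangle = \mathbb P\bigl(\divi{m_1P_1 + \cdots + m_\sigma P_\sigma}_A\bigr).$$
Hence a form $\varphi$ lies in $S_k(X)$, in the finite-subscheme definition, exactly when $\varphi \in \divi{E}_A$ for some effective $E$ of degree $k+1$. Such a $\varphi$ is represented by an expression $Y \in \divi{E}$, and then $D_Y \leq E$, so $w_A(\varphi) \leq W(Y) \leq k+1$. If $\divi E$ has more places than needed, we simply drop the zero coefficients.

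Now apply Theorem \ref{Thm:secantk}. The variety $S_k(X)$ has dimension $\min(2k+1, M)$, and secant varieties are closed, so $S_k(X) = \mathbb P^M$ as soon as $2k+1 \geq M$. Here we need the equality between the closure definition and the Hilbert scheme definition, which is \cite[Proposition 6.8]{Buczynski2017FinSchemesSecantVar}. The smallest admissible $k$ is $k = \lceil (M-1)/2 \rceil$. Every nonzero $\varphi$ therefore has weight at most
$$k + 1 = \left\lceil \frac{M+1}{2} \right\rceil = \left\lceil \frac{\deg A - g + 1}{2} \right\rceil,$$
and $\varphi = 0$ has weight $0$.

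The main obstacle is justifying the identification carefully. We must check that the embedding is an isomorphism onto a smooth curve, which holds because $\deg A \geq 2g+1$ gives very ampleness. We must also check that the local ideal computation at points of $\Supp(A)$ really produces the twisted evaluation forms $\ev_A(P,i)$ rather than plain Hasse derivatives of $f$. I would handle the second point by choosing a local trivialization: multiply the homogeneous coordinates by $t_P^{v_P(A)}$ near $P$, which does not change the projective point, and then apply Remark \ref{rem:hasse}. The remaining steps are bookkeeping.
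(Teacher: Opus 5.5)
Your proposal is correct and follows the paper's proof essentially step for step. You embed the curve by the complete linear system $|A|$ (twisted locally by $t_P^{v_P(A)}$) and identify $\ev_A(P,i)$ with the osculating points $P^{(i)}$, so that $\langle H\rangle=\mathbb{P}(\divi{E}_A)$. You then apply Lange's theorem and the finite-subscheme description of $S_k(X)$ with $k=\lceil (M-1)/2\rceil$, which bounds the weight by $k+1=\lceil(\deg A-g+1)/2\rceil$. Like the paper, you rely on the cited equality of the closure and Hilbert-scheme definitions of $S_k(X)$, and you correctly use subschemes of length $k+1$ where the paper's text says ``length $k$''.
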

	\begin{proof}
		Write $C$ for the curve associated to the function field $F$ and $M+1$ for the dimension of $L(A)$, thus $M = \deg A - g$. Let $f_0, \ldots, f_{M}$ be a basis for $L(A)$ and define $X$ as the projective model of $C$ associated to $A$, that is as the image of 
		\begin{align*}
			\Psi: C &\rightarrow \mathbb P^M \\
			P &\mapsto ((t_P^{v_P(A)}f_0)(P) : (t_P^{v_P(A)}f_1)(P) : \ldots : (t_P^{v_P(A)}f_M)(P)).
		\end{align*}
		Because $\deg A \geq 2g+1$, the divisor $A$ is very ample and the map $\Psi$ gives an embedding. In particular $X$ is a smooth projective curve embedded in $\mathbb P^M.$
		
		For a linear form $\psi$, let $\lambda_0, \ldots, \lambda_M \in K$ such that  
		$$\lambda_i := \psi(f_i) \qquad\text{and define }\qquad Q_\psi := (\lambda_0: \ldots: \lambda_M) \in \mathbb P^M. $$ 
		Then, using Remark \ref{rem:hasse}, we have $Q_{\ev(P, m)} = P^{(m)}$ defined above. This definition also admits a notion of linearity, in the sense that for $\chi$ and $\psi$ two linear forms it holds that 
		$$Q_{\chi} + Q_{\psi} = Q_{\chi + \psi}.$$ 
		
		By Theorem \ref{Thm:secantk} we know that $S_k(X) = \mathbb P^M$ whenever $k \geq \frac 12 (M-1)$. Fixing $k := \lceil \frac 12 (M-1) \rceil$ we find that $Q_\varphi \in S_k(X)$. Hence, we can find some finite subscheme $H$ of $X$ of length $k$ such that
		$$ Q_\varphi \in \langle H \rangle = \langle P_1^{(0)}, \ldots, P_1^{(m_1-1)}, P_2^{(0)}, \ldots, P_\sigma^{(m_\sigma-1)} \rangle.$$
		This implies that there must exist $\mu_{i, j} \in K$ such that 
		$$Q_\varphi = \mu_{0, 0}P_0^{(0)} + \ldots + \mu_{\sigma, m_\sigma-1}P_\sigma^{(m_\sigma - 1)}$$ 
		and therefore, 
		$$\varphi = \mu_{0, 0} \ev_A(P_0, 0) + \ldots + \mu_{\sigma, m_\sigma-1}\ev_A(P_\sigma, m_\sigma - 1),$$
		which has weight at most $k+1 = \lceil \frac 12 (M+1) \rceil$. 
	\end{proof}
	
	\section{Specific results for $S$} \label{Sec:specific}
	In this section we prove some results that depend on the specific structure of $S$, that is $\gamma \leq k-3$. By a fairly straightforward argument we find that the conjecture holds for $2b < k$. In the last subsection we will give some technical results that we will need in Section \ref{sec:main} to show that the conjecture also holds for $2b \leq k+2$.
	
	We recall the following intermediate result from \cite{couvreur2024freimanproof}.
	\begin{proposition}\label{prop:KfS2}
		Let $D$ be the minimal divisor such that $S \subseteq L(D)$, then for any $f \in S$ such that $(f)_\infty = D$ it holds that $K(f)S^2 = F$. 
	\end{proposition}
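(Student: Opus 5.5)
The statement is taken from \cite{couvreur2024freimanproof}, and the plan is to reproduce its argument in our notation: compare dimensions over $K(f)$ with dimensions over $K$, and feed in the bound $\gamma\le k-3$ through a Kneser-type theorem.

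\emph{Setup over $K(f)$.} Since $K\subseteq S$ and $(f)_\infty=D$, we have $[F:K(f)]=\deg D=N$. Put $V:=K(f)S$, a $K(f)$-subspace of $F$ containing $K(f)$. Then
$$K(f)S^2=V^2,$$
where the square is now taken over $K(f)$. Let $n:=\dim_{K(f)}V$, so $n\le N$.

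\emph{Step 1: lower bound for $n$.} I would fix a $K(f)$-basis $v_1,\dots,v_n$ of $V$ chosen inside $S$. For $m\ge 0$, write $K[f]_{\le m}$ for the polynomials in $f$ of degree at most $m$, and compare two estimates for $\dim_K\big(K[f]_{\le m}S\big)$.
\begin{itemize}
\item From above: $K[f]_{\le m}S\subseteq L((m+1)D)$, so by Theorem \ref{thm:RR} its dimension is at most $(m+1)N-g+1$.
\item From below: every element of $S$ can be written as $\sum_i c_i v_i$ with $c_i\in K(f)$. Clearing the bounded denominators and using that $K[f]$ is free, the dimension grows like $mn\cdot(\text{const})$. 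Relating pole orders at $\Supp D$ to powers of $f$ should give a growth rate of $m\cdot N$ exactly when the $v_i$ have independent leading behaviour at the poles of $f$.
\end{itemize}
Concretely, I expect $n$ to be at least the number of distinct ``pole-order classes modulo $(f)_\infty$'' realised by elements of $S$. This is where the minimality of $D$ enters: it guarantees that each place of $\Supp D$ is attained as a pole of full order by some element of $S$.

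\emph{Step 2: Kneser's theorem.} Apply the linear Kneser theorem of \cite{HLX02,BSZ18} to $V^2$ over the base field $K(f)$. Let $H\supseteq K(f)$ be the stabiliser field of $V^2$, that is the set of $x\in F$ with $xV^2\subseteq V^2$. Kneser's theorem gives
$$\dim_{K(f)}V^2\ \ge\ 2\dim_{K(f)}(HV)-[H:K(f)].$$
If $H=F$ we are done, since then $V^2$ is a nonzero $F$-subspace of $F$, i.e.\ $V^2=F$. So assume $H\subsetneq F$.

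\emph{Step 3: ruling out a proper stabiliser.} The goal is a contradiction from $V^2=HV^2\subsetneq F$. I would transfer this back to $K$-dimensions of $S^2$. Since $V^2$ is a proper $H$-subspace of $F$, it has $K(f)$-codimension at least $[H:K(f)]$. Intersecting with $L(2D)$, this should force $S^2$ to miss a subspace of $L(2D)$ of dimension roughly $N\big(1-\tfrac{1}{[F:H]}\big)$. Combining this with $\dim S^2=2k-1+\gamma\le 3k-4$ and $N=b+k+g-1$ with $b\le\gamma-g$ (Theorem \ref{thm:3k-4}) should give a contradiction. Alternatively, one can argue that $S$ would then lie in a translate of a Riemann--Roch space of the subfield $H$, contradicting $F=K(S)$ through the genus/degree bounds of Theorem \ref{thm:3k-4}.

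\emph{Main obstacle.} The hard step is the transfer in Steps 1 and 3 between $K(f)$-dimensions and $K$-dimensions inside the filtered pieces $L(mD)$. The Riemann--Roch bookkeeping there has to be sharp enough to exploit $\gamma\le k-3$. I would first try to carry it out by filtering $F$ by pole order along $\Supp D$ and comparing leading terms, in the spirit of Freiman's hole-counting argument.
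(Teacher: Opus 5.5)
\textbf{There is a genuine gap.} The two steps you flag as the main obstacle carry all of the content, and neither is carried out.

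Step 1 is only a heuristic; the count of ``pole-order classes'' is not even well defined when $D$ has several places. What is true, and what is needed, is $\dim_{K(f)}K(f)S=k-1$ exactly, and this requires no growth estimate in $L(mD)$.

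Step 3 is a hope rather than an argument. Nothing you write turns ``$V^2$ is a proper $H$-subspace'' into a contradiction with $\dim S^2\le 3k-4$. The alternative claim, that $S$ would lie in a translate of a Riemann--Roch space of $H$, does not follow either. The condition $V^2\neq F$ only places $V$ inside the proper $H$-subspace $HV$, and $HV$ may have $H$-dimension $\geq 2$. Only $\dim_H HV=1$ is excluded, because $1\in V$ would then give $HV=H$, hence $S\subseteq H$, contradicting $F=K(S)$. As written, the Kneser lower bound of Step 2 is never confronted with any upper bound, so no contradiction is reached. For comparison, the paper does not prove the proposition at all: it imports it from \cite{couvreur2024freimanproof} for $f$ with $F/K(f)$ separable and simple zeros, and reaches general $f$ via $f\mapsto f+a$.

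\textbf{How to close the gap.} The missing pieces are short, and you already wrote down the key numerical input.

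First, show $\dim_{K(f)}V=k-1$. Write $S=S'\oplus Kf$ with $1\in S'$. A $K(f)$-relation among a $K$-basis of $S'$ becomes, after clearing denominators, $\sum_{j=0}^m f^jt_j=0$ with $t_j\in S'$ and $t_m\neq 0$. Then $f^mt_m\in L(mD)$, so $(t_m)\geq -m(f)_0$. Since also $t_m\in L(D)$ and $\Supp (f)_0\cap\Supp D=\varnothing$, this forces $t_m=c\in K^\times$. The same argument applied to $u=cf+t_{m-1}\in L(D)$, using $f^{m-1}u\in L((m-1)D)$, gives $u\in K\subseteq S'$. Hence $f\in S'$, a contradiction.

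Second, Riemann's inequality together with Theorem \ref{thm:3k-4} and the minimality of $D$ gives $N\leq \ell(D)+g-1\leq k+\gamma-1\leq 2k-4<2\dim_{K(f)}V$.

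Third, suppose $V^2\neq F$. Pick a nonzero $K(f)$-linear map $\lambda\colon F\to K(f)$ vanishing on $V^2$. The pairing $(x,y)\mapsto\lambda(xy)$ on $F$ is nondegenerate, and $V$ is totally isotropic for it. Hence $2\dim_{K(f)}V\leq N$, a contradiction.

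So Kneser is not needed. In your framework it would also close at once: with $u=\dim_H HV$ one gets $\dim_H V^2\geq 2u-1$, so $V^2\neq F$ forces $N\geq 2u[H:K(f)]\geq 2(k-1)$. Note that this route derives the proposition from Theorem \ref{thm:3k-4}. That is legitimate here, since the paper takes that theorem as given, but it would be circular inside \cite{couvreur2024freimanproof}, where the proposition is an intermediate result on the way to that theorem.
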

	
	\begin{remarks}~
		\begin{itemize}
			\item In \cite{couvreur2024freimanproof} this is shown for $f$ such that $F/K(f)$ is separable and all zeroes of $f$ are simple. However, one can always choose $a \in K$ such that these conditions hold for $f + a$ and since $K(f+a) = K(f)$ the more general conclusion follows. 
			\item Lemma \ref{prop:KfS2} can be seen as the linear version of the following additive result: for $B \subseteq [0, M]$ with $0, M \in B$ we have $B+B \mod M = \Z/M\Z$. 
		\end{itemize}
	\end{remarks}
	
	\subsection{The additive case} 
	We give a brief overview of Freiman's proof to be able to motivate some of the intermediate steps of the proof in the linear case. The reader may choose to skip this subsection, as all proofs below can be understood without the motivation. 
	
	Let $B \subseteq [0, M]$ be a finite set such that $0, M \in B$ and assume that $\gcd (B) = 1$. Then $B+B \subseteq [0, 2M]$. We define a \textit{hole in $B$} as any element $a \in [0, M]$ that is not in $B$ and similarly we define holes in $B+B$ as $a \in [0, 2M]$ not in $B+B$. Then $B$ has $b_B := M + 1 - |B|$ holes and $B+B$ has $2b_B - \gamma_B$ holes, where $\gamma_B := |B+B| - 2|B| + 1$. 
	Freiman proves that each hole of $B$ falls in exactly one of the following categories: 
	\begin{enumerate}[\normalfont (i)]
		\item A hole $a$ in $B$ is called a \textit{left stable hole} if $a$ is also a hole in $B+B$.
		\item A hole $a$ in $B$ is called a \textit{right stable hole} if $a + M$ is a hole in $B+B$. 
		\item A hole $a$ in $B$ is called an \textit{unstable hole} if neither $a$ nor $a + M$ is a hole in $B+B$. 
	\end{enumerate}
	In particular he shows that a hole cannot be left and right stable simultaneously and that all holes in $B+B$ come from either a left or a right stable hole in $B$. Therefore, we have exactly $2b_B - \gamma_B$ stable holes in $B$ and $\gamma_B - b_B$ unstable holes in $B$. 
	
	By a simple counting argument we know that if $a \in [0, M]$ or $2M - a \in [M, 2M]$ is a hole in $B+B$, then there must be at least $(a+1)/2$ holes in $B$ in the interval $[0, a]$ or $[M-a, M]$, respectively. This gives restrictions on how large the largest left stable hole $e$ and how small the smallest right stable hole $c$ in $B$ can be. Note that by definition it holds that $[e+1, c + M -1] \subseteq B+B$. Hence, the bounds on $e$ and $c$ give a bound on the minimal size of an arithmetic progression in $B+B$, which is exactly what we want.
	
	The most tricky part of the argument is to show that $e < c$, of which we do not give the details here. 
	
	The analog of the fact that a hole in $B$ can never be both left and right stable is given in Theorem \ref{lem:dimt1}. Lemma \ref{lem:disj} plays the role of the fact that all holes in $B+B$ arise from either left or right stable holes in $B$. The proof given in Section \ref{sec:main} and especially the proof of Proposition \ref{prop:main} is strongly based on the additive argument of $e<c$.
	\subsection{The case $2b < k$} 
	The main step of the proof when $2b < k$, is to give a better upper bound on the weight of a form in $\Phi_{2D}(S^2)$. To do this we will use that $b+g \leq \gamma \leq k-3$, which is implied by the following. 
	\begin{lemma}\label{lem:gammabound}
		We have $\gamma \in [b + g, 2b + g]$.
	\end{lemma}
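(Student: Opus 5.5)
We need to prove $\gamma \in [b+g, 2b+g]$. We have $\dim S^2 = 2k-1+\gamma$. Since $N = \deg D = b+k+g-1$ and $\deg D \ge 2g+2$, Theorem \ref{thm:RR} gives $\ell(2D) = 2N - g + 1 = 2b + 2k + g - 1$. We get both bounds on $\gamma$ by comparing $\dim S^2$ with $\ell(2D)$ in two directions.

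\emph{Upper bound.} Since $S^2 \subseteq L(D)^2 \subseteq L(2D)$, we have $2k - 1 + \gamma \le 2b+2k+g-1$, which gives $\gamma \le 2b+g$. This step is immediate.

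\emph{Lower bound.} We want $\dim S^2 \ge 2k-1+b+g$, or equivalently $\codim_{L(2D)} S^2 \le b$, that is $\dim \Phi_{2D}(S^2) \le b$. Theorem \ref{thm:3k-4} already gives $g + \ell(D) - k \le \gamma$ with $\ell(D) = k+b$, i.e.\ $\gamma \ge b+g$. This requires that $D$ be the divisor of Theorem \ref{thm:3k-4}, or that $\ell(D)$ be at most the dimension of the Riemann--Roch space produced there. Since $D$ is the minimal divisor with $S \subseteq L(D)$, any $L(D')$ containing $S$ with $K \subseteq S$ satisfies $D' \ge D$. Hence $\ell(D') \ge \ell(D)$, and applying Theorem \ref{thm:3k-4} yields $k + b = \ell(D) \le \ell(D') \le k + \gamma - g$, so $\gamma \ge b+g$.

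The only point needing care is this minimality claim: that every divisor $D'$ with $S \subseteq L(D')$ dominates $D$. This holds because $L(D') \cap L(D) = L(\min(D,D'))$ also contains $S$, and $\min(D,D')$ has degree at most $\deg D$, with equality only if $D \le D'$. If the paper's notion of minimality is by degree, I would first check that the minimal-degree divisor is unique, namely the coefficientwise minimum of all divisors whose spaces contain $S$, which is $D = \sup_{s \in S}(s)_\infty$. As a fallback I could argue via Proposition \ref{prop:KfS2} and dual counting of $\Phi_{2D}(S^2)$, but I expect the direct invocation of Theorem \ref{thm:3k-4} to suffice.
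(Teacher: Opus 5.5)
Your proof is correct. The upper bound is exactly the paper's argument, but your lower bound takes a different route.

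The paper does not use the conclusion of Theorem~\ref{thm:3k-4} for the lower bound. It fixes $f\in S$ with $(f)_\infty=D$ and invokes Proposition~\ref{prop:KfS2}: $K(f)S^2=F$, so $\dim_{K(f)}S^2=N$. It then compares $K$- and $K(f)$-dimensions. Since $S+fS\subseteq S^2$, one gets $\dim_K S^2-\dim_{K(f)}S^2\ge \dim_K(S+fS)-\dim_{K(f)}(S+fS)$, that is $2k-1+\gamma-N\ge (2k-1)-(k-1)$, which gives $\gamma\ge N-k+1=b+g$.

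You instead take the space $L(D')\supseteq S$ with $\ell(D')\le k+\gamma-g$ supplied by Theorem~\ref{thm:3k-4}. You transfer the bound to $D$ via $D\le D'$, which you justify correctly through $L(D)\cap L(D')=L(\min(D,D'))$ and the degree-minimality of $D$.

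Your route is shorter and relies only on the black box the paper already assumes. The paper itself uses this consequence in the remark after Theorem~\ref{Thm:main} on the range of $b$ and in the Notation section. The cost is the small minimality lemma, which you supply.

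The paper's route essentially reproves the relevant step of Couvreur--Z\'emor from the intermediate result $K(f)S^2=F$. It works directly with any $f$ satisfying $(f)_\infty=D$, without needing uniqueness of the minimal divisor. It also rehearses the $K$ versus $K(f)$ dimension count that is reused in the proof of Lemma~\ref{lem:dimt1}.
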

	\begin{proof}
		The upper bound is immediate, because
		$$\gamma = \dim(S^2) - 2\dim(S) + 1 \leq \ell(2D) - 2(\ell(D) - b) + 1 = 2b + g.$$ 
		
		For the lower bound we need the following fact. Let $U, V$ and $T$ be $K$-subspaces such that $U = V \oplus T$ and let $L$ be an extension field of $K$. Then $\dim_L U \leq \dim_L V + \dim_L T$ and therefore,
		$$\dim_K U - \dim_L U \geq \dim_K V + \dim_K T - \dim_L V - \dim_L T \geq \dim_K V - \dim_L V.$$ 
		We will use this for $L = K(f)$, $U = S^2$ and $V = S + fS$. 
		
		Let $f \in S$ such that $(f)_\infty = D$. Then clearly $S + fS \subseteq S^2$, so
		\begin{equation*} \label{eq:dims}
			\dim_K S^2 - \dim_{K(f)} S^2 \geq \dim_K (S + fS) - \dim_{K(f)} (S+ fS).
		\end{equation*} 
		We know that $\dim_K S^2 = 2k - 1 + \gamma$ and $\dim_{K(f)} (S + fS) = k - 1$. Since $S \cap fS = \langle f \rangle$, it holds that $\dim_K (S + fS) = 2k-1$.  By Proposition \ref{prop:KfS2} we have that $K(f)S^2 = F$, which gives $\dim_{K(f)} S^2 = N$. Filling in these values we find that
		$$ 2k - 1 + \gamma - N \geq 2k - 1 - (k-1).$$ 
		Recall that $N = k + b + g - 1$, which gives us the lower bound. 
	\end{proof}
	
	\begin{lemma}\label{lem:2bbound}
		Let $\varphi\in \Phi_{2D}(S^2)$ then,
		$ w_{2D}(\varphi)\leq 2b. $
	\end{lemma}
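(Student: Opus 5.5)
Fix $\varphi \in \Phi_{2D}(S^2)$ and a minimal-weight expression $X$ with $X_{2D} = \varphi$ and $W(X) = w := w_{2D}(\varphi)$. We argue by contradiction: suppose $w \geq 2b+1$. We then play the lower bound of Corollary \ref{cor:m_dimension} against the fact that $\Phi_D(S)$ is small, namely
$$\dim \Phi_D(S) = \ell(D) - k = b.$$

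\textbf{Step 1 (degree range).} We check which part of Corollary \ref{cor:m_dimension}, applied with $A = D$ and $U = S$, is available. By Proposition \ref{prop:max_weight} applied to $2D$ (allowed since $\deg 2D \geq 2g+1$), we have
$$w \leq \left\lceil \frac{2N - g + 1}{2} \right\rceil.$$
We compare $w$ with $\deg D - 2g + 1 = N - 2g + 1$.
\begin{itemize}
\item If $w \leq N - 2g + 1$, part (i) gives $\dim(\divi{D_X}_D \cap \Phi_D(S)) \geq w/2$.
\item Otherwise we write $w = N - 2g + 1 + d$ and use part (ii), which gives the bound $w/2 - (d+1)/2$.
\end{itemize}
In both cases the left-hand side is at most $\dim \Phi_D(S) = b$.

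\textbf{Step 2 (case (i)).} Here $w/2 \leq b$, so $w \leq 2b$. This contradicts the assumption $w \geq 2b+1$.

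\textbf{Step 3 (case (ii)).} Here the bound reads
$$\frac{w-d-1}{2} = \frac{N - 2g}{2} \leq b,$$
that is, $N \leq 2b + 2g$. Substituting $N = k + b + g - 1$ gives $k \leq b + g + 1$. But Lemma \ref{lem:gammabound} together with the hypothesis $\gamma \leq k-3$ gives $b + g \leq \gamma \leq k - 3$, so $k \geq b + g + 3$. This is a contradiction. One also has to check that $d \leq 2g+1$, so that part (ii) applies. This follows from the upper bound in Step 1: $w - (N - 2g + 1) \leq \lceil (2N - g + 1)/2 \rceil - N + 2g - 1$, which is roughly $3g/2$ and hence at most $2g + 1$.

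\textbf{Main obstacle.} The step needing care is case (ii), because there the Clifford-type loss $(d+1)/2$ could a priori swamp the gain. The fix is that the inequality no longer involves $w$ at all and reduces to a statement about $N$, which Lemma \ref{lem:gammabound} handles. Alternatively, and more simply, case (ii) cannot occur at all. If $w \geq 2b+1$, then Step 1 requires the inequality $2b + 1 \leq \lceil (2N - g + 1)/2 \rceil$, which is harmless. However, any $w > N - 2g + 1$ already forces $N \leq 2b + 2g$ by the computation above, and that is impossible. So in every case $w \leq 2b$.
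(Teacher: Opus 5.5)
Your proof is correct and follows the paper's argument. You bound $w$ via Proposition \ref{prop:max_weight}, apply Corollary \ref{cor:m_dimension}(i) in the low range to get $w\le 2b$, and in the high range use part (ii) to get $N-2g\le 2b$, which contradicts $b+g\le\gamma\le k-3$ from Lemma \ref{lem:gammabound}. The only differences are cosmetic: you split at $N-2g+1$ rather than $N-2g$, and you explicitly check that $d\le 2g+1$, which the paper leaves implicit.
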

	\begin{proof}
		From Proposition \ref{prop:max_weight} we know that for all $\varphi$ it holds that $w:= w_{2D}(\varphi) \leq N - g/2 + 1$. We first prove the bound for $w \leq N - 2g$ and then show that if $w$ is in the interval $[N- 2g + 1, N - g/2 + 1]$ we end up in a contradiction. 
		
		For $w \leq N - 2g$, by Corollary \ref{cor:m_dimension} (i), we know that there exist linearly independent forms $\varphi_1, \ldots, \varphi_{w/2} \in \Phi_D(S)$. Because the dimension of $\Phi_D(S)$ is $b$, we find that $w/2 \leq b$, which is what we wanted. 
		
		Now suppose that $w = N - 2g + 1 + d$ for some $d$ between $0$ and $2g + 1$. By Corollary \ref{cor:m_dimension} (ii) we find linearly independent
		$$\varphi_1, \ldots, \varphi_{\frac{w -( d + 1)}2} \in \Phi_D(S).$$ Hence, we obtain
		$$w \leq 2b + d + 1.$$ 
		From Lemma \ref{lem:gammabound} we know that $b \leq \gamma - g \leq k - 3 - g$. Substituting this into the previous inequality gives
		$$w \leq b + k - 3 - g + d + 1 = N - 2g + d - 1.$$ 
		This contradicts our earlier assumption and we conclude that $w_{2D}(\varphi) \leq 2b$. 
	\end{proof}
	
	\begin{remark}
		Because $2b \leq (k - g - 3) + b = N - 2g - 2$, it follows from Lemmas \ref{lem:weightLD} and \ref{lem:2bbound} that any $\varphi \in \Phi_{2D}(S^2)$ has a \textit{unique} minimal expression. We will also call this \textit{the} minimal expression of $\varphi$. 
	\end{remark}
	
	The following lemma gives a general lower bound of $2k$ on the weight of non-minimal expressions representing forms in $\Phi_{2D}(S^2)$. We prove it in a slightly more general setting, which will be useful for the proof of the main theorem later on.
	
	\begin{lemma}\label{lem:2kbound}
		Let $\varphi \in \Phi_{2D}(S^2)$ of weight at most $2b - 2z$ for some $z\geq 0$.  Let $X$ be an expression representing $\varphi$ that is not minimal. Then $W(X) \geq 2k + 2z.$
	\end{lemma}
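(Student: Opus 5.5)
The plan is to apply Lemma \ref{lem:weightLD} with $A = 2D$. Let $\varphi \in \Phi_{2D}(S^2)$ have weight $w := w_{2D}(\varphi) \leq 2b - 2z$, and let $X_0$ be an expression with $W(X_0) = w$. By the remark following Lemma \ref{lem:2bbound}, $X_0$ is the unique minimal expression. Now let $X$ be an expression representing $\varphi$ that is not minimal. Then $X \neq X_0$, and both represent $\varphi$ on $L(2D)$, so Lemma \ref{lem:weightLD} gives
$$W(X) + W(X_0) \geq \deg(2D) - 2g + 2 = 2N - 2g + 2.$$
Hence $W(X) \geq 2N - 2g + 2 - (2b - 2z)$.

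It remains to check the arithmetic using $N = k + b + g - 1$. We have
$$2N - 2g + 2 - 2b + 2z = 2k + 2b + 2g - 2 - 2g + 2 - 2b + 2z = 2k + 2z,$$
which is exactly the claimed bound $W(X) \geq 2k + 2z$.

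The only point needing care is that the hypothesis of Lemma \ref{lem:weightLD} is satisfied, which requires two distinct expressions representing the same form. That holds here because $X$ is not minimal, so $W(X) > w = W(X_0)$ and therefore $X \neq X_0$. We never need the uniqueness statement itself, only the sum inequality. The lemma also assumes $\deg A \geq 2g - 1$, and for $A = 2D$ this follows from $\deg D \geq 2g+2$. So I expect no real obstacle; the bound on $\Phi_{2D}(S^2)$ weights is only used through the hypothesis $w \leq 2b - 2z$.
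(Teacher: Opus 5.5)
Your proposal is correct and is essentially the paper's own proof: apply Lemma~\ref{lem:weightLD} with $A=2D$ to $X$ and the minimal expression of $\varphi$ (which has weight at most $2b-2z$), then substitute $N=k+b+g-1$ to get $W(X)\geq 2N-2g+2-(2b-2z)=2k+2z$. Your checks that $X$ differs from the minimal expression and that $\deg(2D)\geq 2g-1$ are the only points needing care.
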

	\begin{proof}
		Let $Y$ be the minimal expression of $\varphi$, then $$W(Y) = w_{2D}(\varphi) \leq 2b - 2z.$$ The expressions $X$ and $Y$ have to be distinct, so Lemma \ref{lem:weightLD} implies that 
		\begin{equation*}
			W(X) \geq 2N - 2g + 2 - W(Y) \geq 2k + 2z. \qedhere
		\end{equation*}
	\end{proof}
	
	\begin{proposition} \label{prop:zbound}
		Suppose all forms in $\Phi_{2D}(S^2)$ have weight at most $2b - 2z$. Then the conjecture holds whenever $2b \leq k + 3z-1.$ 
	\end{proposition}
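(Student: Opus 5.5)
The plan is to build one divisor $E\ge 0$ with $\deg E\le 2b$ such that $\Phi_{2D}(S^2)\subseteq \divi{E}_{2D}$, and then take $G:=2D-E$. By Lemma \ref{lem:rranh} we have $\divi{E}_{2D}=\Phi_{2D}(L(2D-E))$. So the inclusion $\Phi_{2D}(S^2)\subseteq\Phi_{2D}(L(2D-E))$ gives $L(2D-E)\subseteq S^2$. Moreover, if $\deg(2D-E)\ge 2g-1$, Theorem \ref{thm:RR} gives
$$\ell(2D-E)=2N-\deg E-g+1\ge 2N-2b-g+1=2k+g-1,$$
using $N=k+b+g-1$. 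This is exactly the conjectured bound. So everything reduces to producing such an $E$.

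\emph{Construction of $E$.} Fix a basis $\varphi_1,\dots,\varphi_h$ of $\Phi_{2D}(S^2)$, where $h=2b+g-\gamma\le b$ by Lemma \ref{lem:gammabound}. Let $X_i$ be the unique minimal expression of $\varphi_i$; it exists by the remark following Lemma \ref{lem:2bbound}, and $W(X_i)\le 2b-2z$ by hypothesis. Set $E_j$ to be the coefficientwise maximum of $D_{X_1},\dots,D_{X_j}$. For generic $\lambda_i\in K$ the expression $Y_j=\sum_{i\le j}\lambda_iX_i$ satisfies $D_{Y_j}=E_j$, because the top coefficients at each place do not cancel generically. Also $(Y_j)_{2D}\in\Phi_{2D}(S^2)$. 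The point is that Corollary \ref{cor:m_dimension} applies to \emph{any} expression projecting into $\Phi_{2D}(S^2)$, minimal or not. So if $\deg E_j\le N-2g+1$, part (i) gives
$$b=\dim\Phi_D(S)\ge \dim\bigl(\divi{E_j}_D\cap\Phi_D(S)\bigr)\ge \deg(E_j)/2,$$
that is, $\deg E_j\le 2b$. Since $\divi{E_h}_{2D}$ contains every $\varphi_i$, the induction up to $j=h$ gives the required $E=E_h$.

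\emph{The obstacle.} The difficulty is controlling the inductive step so that we always stay in the range where Corollary \ref{cor:m_dimension} gives a contradiction. Passing from $E_{j-1}$ to $E_j$ adds at most $W(X_j)\le 2b-2z$ to the degree, so $\deg E_j\le 4b-2z$. If $\deg E_j> N-2g+1$, I would imitate the second half of the proof of Lemma \ref{lem:2bbound}. Writing $\deg E_j=N-2g+1+d$, part (ii) gives $\deg E_j\le 2b+d+1$, and combined with $b\le k-3-g$ this contradicts $d\ge 0$, provided $d\le 2g+1$.

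Ensuring $4b-2z$ does not overshoot the window $N+2$ is where the hypothesis $2b\le k+3z-1$ must enter. If the crude bound is insufficient, I would not add $D_{X_j}$ wholesale. Instead I would use Lemma \ref{lem:2kbound}: any non-minimal expression representing a form of weight at most $2b-2z$ has weight at least $2k+2z$, while $2b\le k+3z-1$ gives $4b-2z<2k+2z+2z$. The aim is to show that the overlap between $D_{X_j}$ and $E_{j-1}$ is large, so that $\deg E_j$ grows by at most roughly $2b-\deg E_{j-1}$ plus a controlled error. Alternatively, I would try choosing the order of the $\varphi_i$ greedily by weight to keep increments small. I expect this bookkeeping, and not the reduction to $E$, to be the real work.
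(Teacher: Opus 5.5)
There is a genuine gap: the inductive step, which you yourself call the real work, is never closed. The tool you lean on cannot close it by itself. Your reduction to a divisor $E\ge 0$ with $\deg E\le 2b$ and $\Phi_{2D}(S^2)\subseteq\divi{E}_{2D}$ matches the paper's, and the generic-combination construction $D_{Y_j}=E_j$ is sound.

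Corollary \ref{cor:m_dimension} only gives information about expressions of weight at most $N+2$. Your crude bound $\deg E_j\le 4b-2z$ need not stay below that. More to the point, that argument never uses $2b\le k+3z-1$. An argument not using it would, with $z=0$ (always allowed by Lemma \ref{lem:2bbound}), prove Conjecture \ref{Conj:main} outright. So the hypothesis has to enter elsewhere. Your attempted comparison with Lemma \ref{lem:2kbound} also misfires: $4b-2z<2k+4z$ is not what is needed. You need to land strictly below $2k+2z$.

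The missing idea is to carry \emph{minimality} through the induction. Do not merely track $\deg E_{j-1}\le 2b$. Prove instead that $Y_{j-1}$ is the minimal expression of $(Y_{j-1})_{2D}$, so that $\deg E_{j-1}=W(Y_{j-1})\le 2b-2z$. Then
\[
W(Y_j)\le W(Y_{j-1})+W(X_j)\le 4b-4z\le 2k+2z-2 ,
\]
using $2b\le k+3z-1$.

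By hypothesis, $(Y_j)_{2D}\in\Phi_{2D}(S^2)$ has weight at most $2b-2z$. Lemma \ref{lem:2kbound} then says any non-minimal representative of it has weight at least $2k+2z$. Hence $Y_j$ is minimal, and $\deg E_j=w_{2D}((Y_j)_{2D})\le 2b-2z$.

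Put differently, sums of minimal expressions are minimal. The minimal expressions therefore form a linear space, and a generic element of it has divisor $E$ with $\deg E\le 2b-2z$. This is exactly the paper's argument, phrased there as a contradiction from two forms $\varphi,\psi$ with $W(X_\varphi+X_\psi)\ge 2b+1$. No appeal to Corollary \ref{cor:m_dimension} and no overlap bookkeeping are needed at this step.
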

	\begin{proof}
		It suffices to prove that 
		$$\Phi_{2D}(S^2) \subseteq \Phi_{2D}(L(G)),$$
		for some $L(G)$ of dimension at least $2k + g - 1.$ This implies that $\Phi_{2D}(L(G))$ should have dimension at most $2b$. By Lemma \ref{lem:rranh} this is equivalent to showing there exists a divisor $2D - G \geq 0$ such that $\Phi_{2D}(S^2) \subseteq \divi {2D - G}_{2D}$ and $\deg(2D - G) \leq 2b$. 
		
		Suppose this is not the case. Then there are two forms $\varphi, \psi \in \Phi_{2D}(S^2)$ represented by their minimal expressions $X_\varphi$ and $X_\psi$, such that $X_\varphi + X_\psi$ has weight at least $2b + 1$. By assumption and using Lemma \ref{lem:2kbound} we find that 
		$$W(X_\varphi), W(X_\psi) \leq 2b - 2z \quad \text{ and } \quad W(X_\varphi + X_\psi) \geq 2k + 2z.$$ 
		Hence, 
		$$4b - 4z \geq W(X_\varphi) + W(X_\psi) \geq W(X_\varphi + X_\psi) \geq 2k+2z.$$ 
		Showing that this can only happen for $2b > k+3z - 1$. 
	\end{proof}
	
	Lemma \ref{lem:2bbound} shows that we can always take $z = 0$ in Proposition \ref{prop:zbound}. This implies that the conjecture must hold for $2b < k$.
	
	\subsection{Technical lemmas} \label{subs:basis} In this subsection we will prove two technical results that we will need for the proof of the main theorem. The philosophy behind them is that we want to be able to split the annihilator of $S^2$ up into forms \textit{at infinity} and forms \textit{at zero}, motivated by Freiman's distinction between left and right stable holes. In practice we will take $f \in S$ such that $(f)_\infty = D$, define $E := (f)_0$ and consider $\varphi \in \divi D_{2D}$ and $\varphi \in \divi E_{2D}$. Both proofs of the lemmas below rely on choosing a specific basis for $L(2D)$ based on $f$ and  $\Phi_{2D}(S^2)$. 
	
	By minimality of $D$ there must exist some $f \in S$ such that $(f)_\infty = D$. Fix such an $f$ and define $E:=(f)_0$. Write $\varphi_1, \ldots, \varphi_b$ for a basis of $\Phi_D(S)$. By Corollary \ref{cor:representative} there are expressions $X_1, \ldots, X_b \in \divi D$ and $Y_1, \ldots, Y_b \in \divi E$ such that 
	$$\varphi_i = X_{i, D} = Y_{i, D}. \qquad \text{ for } \qquad i \in [1, b].$$ 
	Furthermore, since $\dim \divi D = \dim \divi D_D + g$ and $\dim \divi E = \dim \divi E_D + g$, there are $g$ linearly independent expressions $X_{b+1}, \ldots, X_{b+g} \in \divi D$ and $g$ linearly independent expressions $Y_{b+1}, \ldots, Y_{b+g} \in \divi E$ such that $$X_{i, D} = Y_{i, D} = 0 \qquad \text{ for } i \in [b+1, b+g].$$ We denote the translate of $Y_i$ by $f$ as $Y'_i$ for all $i \in [1, b+g]$. 
	
	Define a basis $1 = f_0, f_1, \ldots, f_{N-g} = f$ of $L(D)$ such that 
	$$\varphi_i(f_j) = \delta_{i, j} \qquad \text{ for } \qquad i\in [1, b], \quad j \in [0, N-g],$$
	where $\delta_{i, j}$ denotes the Kronecker-delta symbol. 
	We then have the following for all $j \in [0, N-g]$: 
	\begin{alignat*}{2}
		X_{i,2D}(f_j) &= X_{i, D}(f_j) = \begin{cases} \delta_{i,j} &\text{ for } i \in [1, b] \\ 0 &\text{ for } i \in [b+1, b+g] \end{cases},
		\qquad X_{i,2D}(f f_j) = 0 \text{ for } i \in [1, b+g],
		\\
		Y'_{i,2D}(f_j) &= 0 \text{ for } i \in [1, b+g],
		\qquad Y'_{i,2D}(ff_j) = Y_{i, D}(f_j) = \begin{cases} \delta_{i,j} &\text{ for } i \in [1, b] \\ 0 &\text{ for } i \in [b+1, b+g] \end{cases}.
	\end{alignat*}
	The elements $f_0, \ldots, f_{N-g}, ff_1, \ldots, ff_{N-g}$ are all linearly independent in $L(2D)$. Define $h_1, \ldots, h_g$ such that $f_0, \ldots, f_{N-g}, ff_1, \ldots, ff_{N-g}, h_1, \ldots, h_g$ is a basis for $L(2D)$ and 
	$$X_{i, 2D}(h_j) = \delta_{i, j+b} \text{ for } i\in [1, b+g] \text{ and } j \in [1, g].$$ 
	Without loss of generality we can assume that
	$$Y'_{i, 2D} = X_{i, 2D} \text{ for } i \in [b+1, b+g].$$
	Then in particular we have that 
	$$\Phi_{2D}(S + fS) = \langle X_{1,2D}, \ldots, X_{b+g,2D}, Y'_1, \ldots, Y'_{b,2D} \rangle = \langle X_{1,2D}, \ldots, X_{b,2D}, Y'_{1,2D}, \ldots, Y'_{b+g,2D} \rangle.$$ 
	
	Now let $X' = \sum_{i=b+1}^{b+g} \lambda_{i} X_{i}$ for $\lambda_i \in K$ be a non-zero expression. By definition the projection of $X'$ evaluates as zero on $L(D)$. Lemma \ref{lem:weightLD} then implies that
	$$W(X') \in [N-g + 1, N] \subseteq [2b+1, 2k-1].$$ 
	Using Lemmas \ref{lem:2bbound} and \ref{lem:2kbound}, we deduce that for any such expression $X'$ we have $X'_{2D} \not\in \Phi_{2D}(S^2)$. Because $\Phi_{2D}(S^2)$ is additionally a subset of $\Phi_{2D}(S+fS),$ we find that 
	\begin{equation}\label{eq:anhS2}
		\Phi_{2D}(S^2) \subseteq   \langle X_{1,2D}, \ldots, X_{b,2D}, Y'_{1,2D}, \ldots, Y'_{b,2D} \rangle.
	\end{equation}
	Hence, every form $\varphi$ in $\Phi_{2D}(S^2)$ has an expression of the form $X_\varphi + Y_\varphi$, such that $X_\varphi$ is a linear combinations of the $X_i$ for $i \in [1, b]$ and $Y_\varphi$ is a linear combination of the $Y'_i$ for $i \in [1, b]$. 
	
	We are now ready to prove the first technical lemma. The result can be seen as a generalization of the fact that a hole in $B$ can never be both left and right stable. 
	
	\begin{lemma}\label{lem:dimt1}
		Let $D_X\ge0$ and $D_Y\ge0$ be two disjoint divisors. Suppose that there exists $f\in S$ such that 
		$$(f)_0\ge D_X \qquad\text{and}\qquad (f)_\infty=D \ge D_Y .$$ 
		For $i=1,\ldots,t$ let $X_i\in \divi{D_X}$ and $Y_i\in \divi{D_Y}$ be linearly independent expressions such that $$X_{i,D}=Y_{i,D}.$$ Write $Y'_i$ for the translates of $Y_i$ by $f$ and define 
		$$ V:=\langle X_{1},\ldots,X_{t},Y'_{1},\ldots,Y'_{t}\rangle\quad \text{and}\quad \mathcal E:=\langle Z\in V \mid Z_{2D}\in \Phi_{2D}(S^2)\rangle.$$ Then, $\dim (\mathcal E)\le t$. 
	\end{lemma}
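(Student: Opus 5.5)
We will argue by contradiction, using a test-element construction in the spirit of Proposition \ref{prop:m_dimension}. Suppose $\dim\mathcal E\ge t+1$. The space $V$ has dimension at most $2t$ as a space of expressions, and it splits as $V_X\oplus V_Y$ with $V_X:=\langle X_1,\dots,X_t\rangle\subseteq\divi{D_X}$ and $V_Y:=\langle Y'_1,\dots,Y'_t\rangle\subseteq\divi{D_Y}$. The second space lies in $\divi{D_Y}$ because translation preserves supports (Lemma \ref{lem:translate}). Since $D_X$ and $D_Y$ are disjoint, the sum is direct. The projection $\pi_X:V\to V_X$ then has kernel $V_Y$. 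Applying it to $\mathcal E$ gives
$$\dim\pi_X(\mathcal E)+\dim(\mathcal E\cap V_Y)\ge t+1.$$
Symmetrically, with $\pi_Y$,
$$\dim\pi_Y(\mathcal E)+\dim(\mathcal E\cap V_X)\ge t+1.$$

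The key identification is how the two halves behave on $S^2$. For $Z=\sum\lambda_iX_i+\sum\mu_iY'_i$, evaluate on $h\in S$ and on $fh\in fS$. On $h\in S\subseteq L(D)$, the forms $Y'_{i,2D}$ vanish: they are translates of forms on $L(D)$ by $f$, so they are supported on $fL(D)$-type conditions, just as in the computation above \eqref{eq:anhS2}. Hence
$$Z_{2D}(h)=\sum\lambda_iX_{i,D}(h)=\Big(\sum\lambda_iX_i\Big)_D(h).$$
On $fh$, the forms $X_{i,2D}$ vanish because $(f)_0\ge D_X$, and we get
$$Z_{2D}(fh)=\Big(\sum\mu_iY_i\Big)_D(h)=\Big(\sum\mu_iX_i\Big)_D(h).$$
So $Z\in\mathcal E$ forces both $\sum\lambda_iX_{i,D}$ and $\sum\mu_iX_{i,D}$ to lie in $\Phi_D(S)$.

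The contradiction we aim for is this. Take $\mathcal E$ of dimension $t+1$. By the dimension count there is a nonzero $Z\in\mathcal E$ whose two coefficient vectors $\lambda$ and $\mu$ are proportional, or more precisely a $Z$ with $\lambda=c\mu$ after a suitable combination. Since $\mathcal E$ has dimension $t+1$ inside the $2t$-dimensional space indexed by pairs $(\lambda,\mu)$, it meets any $t$-dimensional "graph" subspace nontrivially. We choose the graph subspace $\{(\mu,\mu)\}$, or a twisted version of it, so that the corresponding form kills an element $u_0u_\ell$ as in Proposition \ref{prop:m_dimension}.

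For such $Z$ we construct test elements. Take $u\in S$ with prescribed leading behaviour at a place of $D_X$, namely the last place where $\sum\lambda_iX_i$ is nonzero. Then consider the product $u\cdot u'$, with $u'$ chosen from $S$ or $fS$ so that in $u u'$ exactly one top-order evaluation form of $Z$ survives, once on the $D_X$ side and once on the $D_Y$ side, in the same way as in the proof of Proposition \ref{prop:m_dimension}. The hypotheses $(f)_0\ge D_X$ and $(f)_\infty=D\ge D_Y$ give the valuation bookkeeping: multiplying by $f$ shifts orders by exactly $v_P(f)$ at the places of $D_X$ and $D_Y$. This yields $Z_{2D}(uu')\ne0$, contradicting $Z_{2D}\in\Phi_{2D}(S^2)$.

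The main obstacle is arranging $\lambda$ and $\mu$ so that the surviving leading terms from the $X$-part and the $Y'$-part do not cancel. I would first try an ordering and greedy-basis argument, mimicking the lexicographic construction of Proposition \ref{prop:m_dimension}, applied jointly to the places of $D_X\cup D_Y$.
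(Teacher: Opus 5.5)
There is a genuine gap, and it is structural rather than technical. Your preliminary steps are fine: the splitting $V=V_X\oplus V_Y$, the observation that $Z\in\mathcal E$ forces $\sum\lambda_iX_{i,D}$ and $\sum\mu_iX_{i,D}$ into $\Phi_D(S)$, and the existence of some $Z\in\mathcal E$ with $\lambda=\mu$. The step that is supposed to give the contradiction is producing $u,u'\in S\cup fS$ with $Z_{2D}(uu')\neq 0$. You do not carry it out, and it cannot be done by local valuation bookkeeping in the style of Proposition \ref{prop:m_dimension}. That proposition holds for an arbitrary subspace $U$, and your argument never uses $\dim S^2\le 3k-4$. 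Without that hypothesis the lemma is false.

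For a counterexample, take $F=K(x)$, $S=\langle 1,x,x^4\rangle$, $D=4P_\infty$ and $f=x^4$. Then $(f)_0=4P_0$ and $S^2=\langle 1,x,x^2,x^4,x^5,x^8\rangle$. Take $t=1$, $X_1=\ev(P_0,3)\in\divi{4P_0}$ and $Y_1=\ev(P_\infty,1)\in\divi{2P_\infty}$. Both represent the coefficient of $x^3$ on $L(D)$. The translate $Y_1'=\ev(P_\infty,1)$ reads off the coefficient of $x^7$ on $L(8P_\infty)$. Both $X_{1,2D}$ and $Y'_{1,2D}$ vanish on $S^2$, so $\dim\mathcal E=2>t$. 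Every hypothesis of the lemma holds except small doubling, which fails since $\dim S^2=6>5$. In particular your $\lambda=\mu$ element $Z=X_1+Y_1'$ annihilates all of $S^2$, so no choice of test products can work. This is the linear shadow of the fact that for $B=\{0,1,4\}$ the hole $3$ is both left and right stable. Freiman excludes this only through the global fact that $B+B$ covers all residues mod $M$.

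The missing ingredient is exactly that global input, Proposition \ref{prop:KfS2}: $K(f)S^2=F$. This is where small doubling enters. The paper uses it through a dimension count over $K$ versus $K(f)$:
\begin{itemize}
\item Normalise so that $X_{1,D},\dots,X_{r,D}$ are independent and $X_{i,D}=0$ for $i>r$.
\item Choose a basis $1=f_0,\dots,f_{N-g}=f$ of $L(D)$ with $X_{i,D}(f_j)=\delta_{ij}$ for $i\le r$.
\item Your own computation of $Z_{2D}$ on $L(D)$ and $fL(D)$ then shows that every $Z\in V$ vanishes on $f_j$ and $ff_j$ for $j>r$.
\item Suppose $Z_1,\dots,Z_{t+1}\in\mathcal E$ are independent, and set $T=L(2D)\cap\bigcap_i\ker Z_{i,2D}$. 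Then $\dim_K T\le 2N-g-r$.
\item $T$ contains $S^2$, so $\dim_{K(f)}T=N$.
\item $T$ also contains $\langle 1,f_{r+1},\dots,f_{N-g},ff_{r+1},\dots,ff_{N-g}\rangle$, whose $K$-dimension exceeds its $K(f)$-dimension by $N-g-r+1$.
\item This forces $\dim_K T-\dim_{K(f)}T\ge N-g-r+1$, hence $N\le N-1$.
\end{itemize}
Your observation that the two coefficient vectors give elements of $\Phi_D(S)$ is correct but too weak. It only bounds $\dim\mathcal E$ by twice the dimension of the admissible coefficient space, which does not give $t$.
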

	\begin{proof}
		
		Write $\varphi_i := X_{i,D}$. After replacing the pairs $(X_i,Y_i)$ by suitable linear combinations, we may assume that for some $r \in [1, t]$ we have that $\varphi_1,\ldots,\varphi_r$ are linearly independent and that $\varphi_{r+1}=\cdots=\varphi_t=0.$ We will now define a basis of $L(D)$ as described at the beginning of this subsection. Note that the indexing of the $X_i$ is slightly different, in the sense that we have $X_{i, D} = 0$ for all $i \in [r+1, t]$ rather than $i \in [b+1, b+g]$. Hence, we can find $1 = f_0, f_1, \ldots, f_{N-g} = f$, a basis of $L(D)$ such that $\varphi_i(f_j) = \delta_{i, j}$ for all $i \in [1, r]$ and $j \in [0, N-g]$. 
		
		Then for all $i \in [1, t]$ and all $j \in [0, N-g]$ we have that
		\begin{alignat*}{3}
			X_{i,2D}(f_j) &= X_{i,D}(f_j) &&\quad \textup{and}\quad X_{i,2D}(ff_j) &&= 0;\\
			Y'_{i,2D}(f_j) &= 0 &&\quad \textup{and}\quad Y'_{i,2D}(ff_j) &&= Y_{i,D}(f_j).
		\end{alignat*}
		In particular, for all $i \in [1, t]$ the forms $X_{i,2D}$ and $Y'_{i,2D}$ evaluate zero at $f_j$ and $ff_j$ when $j\in[r+1,N-g]$.
		
		Now suppose there exist linearly independent 
		$$Z_1, \ldots, Z_{t+1} \in \mathcal E.$$ 
		Define $T := L(2D)\cap_{i=1}^{t+1} \Ker(Z_{i, 2D})$, its dimension over $K$ lies in the interval $[2N - g - t, 2N - g - r]$. Since $S^2 \subseteq T$ and $K(f)S^2 = F$, Proposition \ref{prop:KfS2} implies $K(f)T = F$ as well. Hence, $\dim_{K(f)}(T) = N$. 
		
		By our previous observations it must hold that $Z_{i, 2D}(f_j) = Z_{i, 2D}(ff_j)$ for all $ j \in [r+1, N-g]$ and thus,
		$$\langle 1, f_{r+1}, \ldots, f_{N-g}, ff_{r+1}, \ldots, ff_{N-g} \rangle \subseteq T.$$ 
		Over $K(f)$ this space becomes $\langle 1, f_{r+1}, \ldots, f_{N-g} \rangle$ implying that 
		$$\dim_K(T) - \dim_{K(f)}(T) \geq N - g - r + 1.$$ 
		We find
		$$N = \dim_{K(f)} (T) \leq 2N - g - r - N + g + r -1 = N - 1,$$
		a contradiction. 
	\end{proof}
	
	The next lemma gives conditions under which it is possible to actually split forms in $\Phi_{2D}(S^2)$ up into their parts at infinity and at zero. 
	\begin{lemma}\label{lem:dis}
		Let $\varphi \in \Phi_{2D}(S^2)$ and assume that $D \geq D_\varphi$. Take $f \in S$ such that $(f)_\infty = D$ and define $E := (f)_0$. For any form in $\Phi_{2D}(S^2)$ that is represented by $X+Y$ with $X \in \divi {D_\varphi}$ and $Y \in \divi E$ it holds that $$X_{2D}, Y_{2D} \in \Phi_{2D}(S^2).$$
	\end{lemma}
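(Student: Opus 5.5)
It suffices to show $X_{2D}\in\Phi_{2D}(S^2)$; then $Y_{2D}=(X+Y)_{2D}-X_{2D}$ lies in $\Phi_{2D}(S^2)$ as well. Write $\psi:=(X+Y)_{2D}$ and $\chi:=X_{2D}$. The strategy is to compare $\chi$ with $\varphi$, exploiting that $\varphi$ has support inside $D$ (the ``infinity'' side) and that $Y$ is supported on $E=(f)_0$ (the ``zero'' side). Recall that $\Supp(D_\varphi)\subseteq\Supp(D)$ and $\Supp(E)\cap\Supp(D)=\emptyset$.

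The first step is to locate $\chi$ inside $\divi{D_\varphi}_{2D}$. Since $D\geq D_\varphi$, every place of $D_\varphi$ is a pole of $f$ of order at least $v_P(D_\varphi)$. I will apply Proposition \ref{prop:m_dimension} to $U=S$ and the expression $X_\varphi$ of $\varphi$. This shows that the restrictions to $S$ of forms in $\divi{D_\varphi}_D$ span a space of dimension at most $W(X_\varphi)/2$. Put differently, many forms of $\divi{D_\varphi}_D$ vanish on $S$ (Corollary \ref{cor:m_dimension}(i), since $W(X_\varphi)\le 2b$ by Lemma \ref{lem:2bbound}).

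The second step is to test $\chi$ on products. Take $s_1,s_2\in S$. Because $Y$ is supported on zeros of $f$ and, locally at $P\in\Supp(E)$, the elements of $L(2D)$ are regular, I expect to write $s_1s_2$ modulo $L(2D-D_\varphi)+L(2D-E')$, where $E'=D_Y$, using the basis $f_0,\ldots,f_{N-g},ff_1,\ldots,ff_{N-g},h_1,\ldots,h_g$ from Subsection \ref{subs:basis}. In this basis $X$-type forms see only the $f_j$ and $h_j$, and $Y'$-type forms see only the $ff_j$. So $\chi$ and $Y_{2D}$ act on complementary coordinates up to the $h_j$ part.

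The third step uses the inclusion \eqref{eq:anhS2}. That inclusion says the $h$-coordinates are not seen by $\Phi_{2D}(S^2)$, which forces $X$ to lie in the span of $X_1,\ldots,X_b$ and $Y$ in the span of $Y'_1,\ldots,Y'_b$ modulo the kernel of projection. I will then invoke Lemma \ref{lem:dimt1} with $D_X=D_Y$-disjoint data. The $t$-dimensional space it produces, spanned by pairs $X_i,Y'_i$, meets $\Phi_{2D}(S^2)$ in dimension at most $t$. On the other hand, I aim to exhibit $t$ independent elements of the form ``$X$-part alone'' plus $\psi$ inside that intersection, namely coming from $\varphi$ and its multiples by powers of $f$ restricted appropriately. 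Saturating this bound should force the $X$-part and the $Y$-part separately into $\Phi_{2D}(S^2)$.

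The main obstacle is this third step: producing enough elements of $\mathcal E$ so that the dimension bound of Lemma \ref{lem:dimt1} pins down the split. My first attempt will be to take $t=1$ with the pair $(X,Y)$ itself, translated back via Lemma \ref{lem:translate}. I will then argue that if $\chi\notin\Phi_{2D}(S^2)$, there is $u\in S^2$ with $\chi(u)\neq0=\psi(u)$. Multiplying by $f$ moves $u$ into the region where only the $Y$-side is seen, which yields a second independent element and contradicts Lemma \ref{lem:dimt1}.
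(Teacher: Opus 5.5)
Your proposal has a genuine gap: the third step, which carries the whole argument, cannot work. Lemma \ref{lem:dimt1} is an \emph{upper} bound on the number of independent expressions in $\langle X_1,\ldots,X_t,Y'_1,\ldots,Y'_t\rangle$ whose projections lie in $\Phi_{2D}(S^2)$. The assumption you want to refute, $X_{2D}\notin\Phi_{2D}(S^2)$, puts no additional form into $\Phi_{2D}(S^2)$. It only gives you a vector $u\in S^2$ with $X_{2D}(u)\neq 0$.

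A vector of $S^2$ is not an element of $\mathcal E$, so it cannot be the ``second independent element'' you need. Moreover, information of the form $u\in S^2$ can only cut $\Phi_{2D}(S^2)$ down, so it never collides with an upper bound on $\dim\mathcal E$. For the same reason, ``saturating'' the bound $t$ forces nothing.

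The concrete $t=1$ attempt also fails at two further points:
\begin{itemize}
\item The pair $(X,Y)$ does not satisfy the hypothesis of Lemma \ref{lem:dimt1}. Nothing relates $X_D$ to the $D$-projection of the $f^{-1}$-translate of $Y$. In terms of \eqref{eq:anhS2}, $X=\sum\lambda_iX_i$ and $Y=\sum\mu_iY'_i$ with unrelated coefficient vectors.
\item The product $fu$ lies in $fS^2\subseteq L(3D)$. In general it is neither in $S^2$ nor even in $L(2D)$, so nothing about $\Phi_{2D}(S^2)$ can be read off from it.
\end{itemize}

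The missing ingredient is the hypothesis on $\varphi$. It is a form in $\Phi_{2D}(S^2)$ whose \emph{minimal} expression has divisor $D_\varphi$ and weight $w=w_{2D}(\varphi)$. The contradiction has to come from exhibiting an expression of $\varphi$ of weight $<w$. Your Step 1 uses Corollary \ref{cor:m_dimension} correctly, but after that $\varphi$ plays no role in your argument.

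The paper argues as follows. Choose the $X_i$ of Subsection \ref{subs:basis} so that $X_1,\ldots,X_c$ span those lying in $\divi{D_\varphi}$. By \eqref{eq:anhS2}, write $X=\sum_{i\le c}\lambda_iX_i$ and $Y=\sum_{i\le b}\mu_iY'_i$.
\begin{itemize}
\item Suppose $X_{2D}\notin\Phi_{2D}(S^2)$. After adding a multiple of $\varphi$ one may assume $W(X)=w$, and choose the basis so that $X_1=X$ while every expression in $\langle X_2,\ldots,X_c\rangle$ has weight $<w$.
\item Then $Y_{2D}\notin\Phi_{2D}(S^2)$ as well. Hence $\Phi_{2D}(S^2)$ lies in the span of $(X+Y)_{2D}$, $X_{2,2D},\ldots,X_{b,2D}$ and all $Y'_{j,2D}$ except one index $i$.
\item Pick $h\in L(D)$ with $Y'_{j,2D}(fh)=0$ for $j\neq i$ and $Y_{2D}(fh)=1$. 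All these forms vanish on $f_1-fh$, so $f_1-fh\in S^2$.
\item Writing $\varphi=\sum_{j\le c}\nu_jX_{j,2D}$ gives $0=\varphi(f_1-fh)=\nu_1$. So $\varphi$ is represented inside $\langle X_2,\ldots,X_c\rangle$ with weight $<w$, contradicting minimality.
\end{itemize}
The idea you need is to manufacture a specific element of $S^2$ on which $\varphi$ must vanish and play it against the minimality of $w_{2D}(\varphi)$, not to count forms against Lemma \ref{lem:dimt1}.
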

	\begin{proof}
		For $i \in [1, b]$ and $j \in [0, N-g]$ we define $X_i, Y'_i$ and $f_j$ exactly as in the beginning of this subsection. By taking linear combinations of the $X_i$ we may assume that $X_1, \ldots, X_c \in \divi {D_\varphi}$ such that $c$ is maximal. Then Corollary \ref{cor:m_dimension} tells us that $c \geq w/2$, where $w:= w_{2D}(\varphi)$. Furthermore, recall that by \eqref{eq:anhS2} we know that
		$$\Phi_{2D}(S^2) \subseteq  \langle X_{1, 2D}, \ldots, X_{b, 2D}, Y'_{1, 2D}, \ldots, Y'_{b, 2D} \rangle.$$ 
		Let $X$ and $Y$ as in the lemma statement, then the previous implies that 
		$$X = \sum_{i=1}^c \lambda_i X_i \qquad \text{ and } \qquad Y = \sum_{i=1}^{b} \mu_iY'_i \qquad \text{ with } \lambda_i, \mu_i \in K.$$ We will continue the proof by contradiction.
		
		Write $\chi$ for $(X+Y)_{2D}$ and suppose that $X_{2D} \not\in \Phi_{2D}(S^2)$. Since for any $\lambda \in K$ we also have $\chi + \lambda \varphi \in \Phi_{2D}(S^2)$ and thus $X_{2D} + \lambda X_{\varphi, 2D} \not\in \Phi_{2D}(S^2)$, we may assume that $W(X) = w$. We can now redefine $X_1, \ldots, X_c$ such that $X_1 = X$ and $W(X_1) > W(Z)$ for any $Z \in \langle X_2, \ldots, X_c \rangle$. 
		
		Furthermore, note that since $\chi = X_{2D} + Y_{2D}$ it must also hold that $Y_{2D} \not\in \Phi_{2D}(S^2)$ and therefore there is some $i \in [1, b]$ such that \begin{equation}\label{EQ:etabasis}
			\Phi_{2D}(S^2) \subseteq \langle \chi, X_{2, 2D}, \ldots, X_{b, 2D}, Y'_{1, 2D}, \ldots, \widehat{Y'_{i, 2D}}, \ldots, Y'_{b, 2D} \rangle.
		\end{equation}
		
		Let $h \in L(D)$ be the element for which $Y'_{j, 2D}(fh) = 0$ for $j \neq i$ and $Y_{2D}(fh) = 1$. Then all the forms on the right hand side of \eqref{EQ:etabasis} evaluate zero at $f_1 - fh$ and thus, $f_1 - fh \in S^2$. We will use that this implies that $\varphi(f_1-fh) = 0$.  
		
		Define $\nu_i \in K$ such that $\varphi = \sum_{i=1}^c \nu_i X_{i, 2D}$, then
		$$0 = \varphi (f_1 - fh) = \varphi (f_1) = \nu_1 X_{1, 2D}(f_1) = \nu_1.$$ 
		Therefore, $\varphi \in \langle X_{2, 2D}, \ldots, X_{c, 2D} \rangle$, but then $W(\varphi) < W(X_{1}) = w$, a contradiction. 
		
		We conclude that $X_{2D}$ is in $\Phi_{2D}(S^2)$ and therefore so is $\chi - X_{2D} = Y_{2D}$. 
	\end{proof}
	
	\section{Proof of Theorem \ref{Thm:main}}\label{sec:main}
	In this section, we complete the proof of Theorem \ref{Thm:main}. We first recall what has already been proven. By Lemma \ref{lem:2bbound}, every form in $\Phi_{2D}(S^2)$ has weight at most $2b$. Moreover, if every such form has weight at most $2b-2$, then Proposition \ref{prop:zbound}, applied with $z=1$, proves the theorem. Hence, it remains to consider the case where there exists a form in $\Phi_{2D}(S^2)$ of weight $2b-1$ or $2b$. From now on, we fix such a form $\varphi$ and define $w:=w_{2D}(\varphi)$.	
	
	The remaining argument proceeds as follows.
	First, we will show that after translating $S$ and $L(D)$ by some $s^{-1}$ for $s\in S$, we may assume that $D\geq D_\varphi$. This is needed to be able to apply Lemmas \ref{lem:dimt1} and \ref{lem:dis}, for some $f \in S$ with $(f)_\infty = D$. We then prove the existence of a basis of $\Phi_{2D}(S^2)$ whose elements have minimal expressions supported either on $D_\varphi$ or on $E:=(f)_0$.
	
	Finally, mimicking Freiman's argument, we show that the second possibility
	cannot occur. Consequently,
	$$\Phi_{2D}(S^2)\subseteq\Phi_{2D}(2D-D_\varphi),$$
	and therefore
	$$L(2D-D_\varphi)\subseteq S^2.$$
	Since $\deg D_\varphi=w\leq 2b$, we will have
	$$\deg(2D-D_\varphi)=2N-w\geq 2N-2b=2k+2g-2.$$
	Hence, by Theorem \ref{thm:RR},
	$$\ell(2D-D_\varphi)\geq 2k+g-1,$$
	which completes the proof.
	
	\begin{lemma}\label{lem:fixf}
		There exist $t \in S$ such that $\Supp(D + (t))\cap\Supp(D_\varphi) = \varnothing$. For any such $t$ we can find $h \in t^{-1}S$ such that: 
		$$(h)_0 \geq D_\varphi \qquad \text{ and } \qquad (h)_\infty = D  + (t).$$
	\end{lemma}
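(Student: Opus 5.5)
The lemma has two parts: the existence of $t$, then the construction of $h$. I will prove them in that order.

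\emph{Existence of $t$.} For $t\in S\setminus\{0\}$ we have $t\in L(D)$, so $D+(t)\geq 0$ and $\Supp(D+(t))$ consists exactly of those places $P$ with $v_P(t)>-v_P(D)$. Fix a place $P\in\Supp(D_\varphi)$. The condition $v_P(t)>-v_P(D)$ says that $\ev_D(P,0)(t)=0$. By minimality of $D$ we have $S\not\subseteq L(D-P)$, so $\ev_D(P,0)$ does not vanish identically on $S$. Hence its kernel in $S$ is a proper subspace. Since $\Supp(D_\varphi)$ is finite and $K$ is algebraically closed, hence infinite, $S$ is not a finite union of proper subspaces. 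So there is $t\in S$ with $\ev_D(P,0)(t)\neq 0$ for every $P\in\Supp(D_\varphi)$, which gives $\Supp(D+(t))\cap\Supp(D_\varphi)=\varnothing$.

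\emph{Construction of $h$.} Fix such a $t$ and set $D':=D+(t)$. Then $t^{-1}S\subseteq L(D')$, and $D'$ is again the minimal divisor containing $t^{-1}S$, since translation preserves minimality. Again by minimality and finiteness of $\Supp(D')$, a generic element $h_1\in t^{-1}S$ satisfies $(h_1)_\infty=D'$. It remains to impose the vanishing conditions $(h)_0\geq D_\varphi$ while keeping the pole divisor equal to $D'$. Since $\Supp(D_\varphi)$ is disjoint from $\Supp(D')$, the condition $(h)_0\geq D_\varphi$ is exactly the vanishing of the forms in $\divi{D_\varphi}_{D'}$. By Lemma~\ref{lem:rranh} these forms span $\Phi_{D'}(L(D'-D_\varphi))$. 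So I need an element $h\in t^{-1}S\cap L(D'-D_\varphi)$ with full pole divisor $D'$.

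\emph{Main obstacle.} The hard step is showing that this space is not contained in $L(D'-P)$ for any $P\in\Supp(D')$. Once that holds, a generic element of the space works, again by the finite-union-of-subspaces argument.

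I would derive this from Corollary~\ref{cor:m_dimension} applied to the translated annihilator form. Let $\varphi'$ be the translate of $\varphi$ by $t^{-1}$ in $L(2D')^\ast$, with $D_{\varphi'}=D_\varphi$ by Lemma~\ref{lem:translate}. Since $t^{-1}$ maps $S^2$ onto $(t^{-1}S)^2$, we have $\varphi'\in\Phi_{2D'}((t^{-1}S)^2)$. Proposition~\ref{prop:m_dimension} then gives
\[
\codim_{t^{-1}S}\bigl(t^{-1}S\cap L(D'-D_\varphi)\bigr)\leq w/2\leq b.
\]
Next, suppose $t^{-1}S\cap L(D'-D_\varphi)\subseteq L(D'-P)$ for some $P\in\Supp(D')$. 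Then the nonzero restriction of $\ev_{D'}(P,0)$ to $t^{-1}S$ lies in the span of the restrictions of $\divi{D_\varphi}_{D'}$. Equivalently, some form in $\divi{D_\varphi+P}_{D'}$ that is genuinely supported at $P$ lies in $\Phi_{D'}(t^{-1}S)$. I would combine this with the extremal weight $w\geq 2b-1$ of $\varphi$, using a product argument as in Proposition~\ref{prop:m_dimension}. The goal is a form in $\Phi_{2D'}$ of weight exceeding $2b$, contradicting Lemma~\ref{lem:2bbound}, or else a violation of the minimality of $D'$.

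I expect this last contradiction to be the delicate step, and it may require a careful choice of $t$ as well. If it fails, my fallback is to choose $t$ more cleverly, namely so that $D'$ avoids the places where such extra forms could occur.
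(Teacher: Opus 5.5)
\textbf{There is a genuine gap.} Your first part (existence of $t$) is fine. It is the same genericity argument as the paper's, which takes $t=t'+a$ with $(t')_\infty=D$ and chooses $a$ so that the zeros of $t$ avoid $\Supp(D_\varphi)$. Your reduction in the second part is also correct: it suffices to find an element of $T:=t^{-1}S\cap L(D'-D_\varphi)$ with pole divisor exactly $D'$. However, the step you flag as the main obstacle is left unproved, and it is the heart of the lemma. The route you sketch for it (a product argument producing a form of weight $>2b$, or re-choosing $t$) is not what is needed.

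\textbf{The missing idea.} You need a dimension count that uses the hypothesis $w\ge 2b-1$ in the right direction; you only used $w/2\le b$. Corollary~\ref{cor:m_dimension}(i) applies because $w\le 2b\le N-2g-2$. It gives
\[
\dim\bigl(\divi{D_\varphi}_{D'}\cap\Phi_{D'}(t^{-1}S)\bigr)\ge w/2\ge b-\tfrac12,
\]
hence this dimension is at least $b$. Since $\dim\Phi_{D'}(t^{-1}S)=b$, this forces
\[
\Phi_{D'}(t^{-1}S)\subseteq\divi{D_\varphi}_{D'}=\Phi_{D'}(L(D'-D_\varphi)),
\]
that is, $L(D'-D_\varphi)\subseteq t^{-1}S$.

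Equivalently, you can get this from your own codimension bound. It gives $\dim T\ge k-\lfloor w/2\rfloor$, while $\ell(D'-D_\varphi)=N-w-g+1=k+b-w$. For $w\in\{2b-1,2b\}$ these two numbers agree, so $T=L(D'-D_\varphi)$.

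\textbf{Finishing the argument.} Once $T$ is a full Riemann--Roch space, your obstacle disappears. By Lemma~\ref{lem:gammabound} and $\gamma\le k-3$,
\[
\deg(D'-D_\varphi)=N-w\ge k-b+g-1\ge 2g+2.
\]
So Theorem~\ref{thm:RR} gives $\ell(D'-D_\varphi-P)=\ell(D'-D_\varphi)-1$ for each $P\in\Supp(D')$. A generic $h\in L(D'-D_\varphi)$ then has $(h)_\infty=D'$ and $(h)_0\ge D_\varphi$.

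Your intended contradiction also closes immediately once you have the inclusion $\Phi_{D'}(t^{-1}S)\subseteq\divi{D_\varphi}_{D'}$. A form in $\Phi_{D'}(t^{-1}S)$ that genuinely involves $\ev_{D'}(P,0)$ would give a nontrivial linear dependence among the generators of $\divi{D_\varphi+P}_{D'}$. This is impossible by Proposition~\ref{prop:linind}(i), since $\deg(D_\varphi+P)=w+1\le N-2g+1$. So no new weight bound and no cleverer choice of $t$ is required.
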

	\begin{proof}
		For the first statement let $t' \in S$ such that $(t')_\infty = D$, which exists by the minimality of $D$. Note that for any $a \in K$ we then have $(t' + a)_\infty = D$ and thus in particular we can choose some $a$ such that $(t'+a)_0$ is disjoint from $D_\varphi$. Define $t := t' + a$, we find that $D + (t) = (t)_0$ which has the required property. For the rest of the proof we will assume that $t = 1$, by replacing $S$ by $t^{-1}S \subseteq L(D + (t))$ and $\varphi$ by its translate by $t^{-1}$. Then $D$ is replaced by $D + (t)$ and $D_\varphi$ and $w$ stay the same by Lemma \ref{lem:translate}.
		
		Because $w \geq 2b-1$, Corollary \ref{cor:m_dimension} tells us that there exist $b$ linearly independent linear forms
		$$\varphi_1, \ldots, \varphi_{b} \in \Supp(\varphi) \cap \Phi_D(S).$$
		Since the dimension of $\Phi_D(S)$ is $b$ we have
		\begin{equation} \label{EQ:anSb}
			\Phi_D(S) = \langle \varphi_1, \ldots, \varphi_b \rangle \subseteq \Supp_D(\varphi).
		\end{equation}
		It follows that any $f \in L(D)$ such that $(f)_0 \geq D_\varphi$ must be an element of $S$. 
		
		Define $D' = D - D_\varphi$ then $$\deg(D') = N - w \geq k - b + g -1 \geq 2g+2,$$ where the last inequality uses Lemma \ref{lem:gammabound} and $k\geq \gamma +3$. Thus, by Theorem \ref{thm:RR}, we find that for any $P \in \Supp(D)$ we have
		$$\ell(D' - P) = \ell(D') - 1.$$ 
		This shows that there must exist some $h \in L(D')$ that does not lie in any $L(D'-P)$, in other words $(h)_\infty = D$ and $(h)_0 \geq D_\varphi$. Hence we have found $h \in S$ as wished.
	\end{proof}
	\begin{corollary} \label{cor:phis}
		There exists $s \in S$ such that $D + (s) \geq D_\varphi$.
	\end{corollary}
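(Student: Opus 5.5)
The plan is to read the corollary off from Lemma \ref{lem:fixf}. Apply the lemma to obtain $t \in S$ with $\Supp(D+(t)) \cap \Supp(D_\varphi) = \varnothing$. The lemma then gives $h \in t^{-1}S$ satisfying
$$(h)_0 \geq D_\varphi \qquad\text{and}\qquad (h)_\infty = D + (t).$$
Write $h = t^{-1}u$ with $u \in S$. I will take $s := u$ and prove directly that $D + (s) \geq D_\varphi$.

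The key step is a divisor computation. Since $(h) = (h)_0 - (h)_\infty$, we have
$$(h) + D + (t) = (h)_0 \geq D_\varphi.$$
On the other hand, $(h) = (u) - (t)$, so $(h) + D + (t) = D + (u)$. Combining the two gives $D + (u) \geq D_\varphi$, which is the claim with $s = u \in S$.

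One bookkeeping point needs checking. In the proof of Lemma \ref{lem:fixf}, $\varphi$ is replaced by its translate by $t^{-1}$, so the divisor $D_\varphi$ occurring in the conclusion $(h)_0 \geq D_\varphi$ should be the one attached to the translated form. By Lemma \ref{lem:translate} the translated expression has the same support, so $D_\varphi$ is unchanged and the inequality refers to the original $D_\varphi$. This is also what the proof of Lemma \ref{lem:fixf} states explicitly.

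The argument is essentially immediate. The only point requiring care is this consistency of $D_\varphi$ under translation, together with the identity $(h)_\infty = D + (t)$ in the translated setting. If one instead wants the statement in the translated frame $t^{-1}S \subseteq L(D+(t))$, the same computation with $s = h$ gives $(D+(t)) + (h) = (h)_0 \geq D_\varphi$, because $(h)_\infty = D+(t)$. Either reading is what is needed later to replace $S$ by $s^{-1}S$ and assume $D \geq D_\varphi$.
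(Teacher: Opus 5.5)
Your proposal is correct and is the paper's argument: your $u = th$ is exactly the paper's choice $s := ht$, and your divisor computation is the paper's one-line identity $D + (s) = D + (t) + (h) = (h)_0 \geq D_\varphi$.
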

	\begin{proof}
		Let $h$ be such that it meets the conditions of Lemma \ref{lem:fixf} for some $t$. We take $s := ht$, then \begin{equation*}
			D + (s) = D + (t) + (h) = (h)_0 \geq D_\varphi. \qedhere \end{equation*}
	\end{proof}
	
	Since $L(G)\subseteq S$ if and only if $L(G + (s)) \subseteq s^{-1}S$ we may replace $S$ by $s^{-1}S$ and correspondingly $D$ by $D + (s)$ and $\varphi$ by it translate by $s^{-1}$. Note that $D_\varphi$ does not change, since by Lemma \ref{lem:translate} there must exist an expression for $\varphi$ with the same support as before. This also implies that $w$ stays the same. Hence, by Corollary \ref{cor:phis} we can assume that $D \geq D_\varphi$.
	
	\begin{lemma} \label{lem:disj}
		Let $f \in S$ such that $(f)_\infty = D \geq D_\varphi$ and $E := (f)_0$. There exists a basis $\eta_1,\ldots, \eta_{2b + g - \gamma}$ of $\Phi_{2D}(S^2)$ such that the minimal expression for each $\eta_i$ is either in $\divi {D_\varphi}$ or in $\divi E$. 
	\end{lemma}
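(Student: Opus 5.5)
The plan is to work inside the framework of Subsection \ref{subs:basis}, with the fixed $f$ satisfying $(f)_\infty = D \geq D_\varphi$ and $E=(f)_0$. By \eqref{eq:anhS2}, every $\eta \in \Phi_{2D}(S^2)$ is represented by an expression $X_\eta + Y_\eta$, where $X_\eta \in \langle X_1,\dots,X_b\rangle \subseteq \divi D$ and $Y_\eta \in \langle Y'_1,\dots,Y'_b\rangle \subseteq \divi E$. Since $\Phi_{2D}(S^2)$ has dimension $\dim L(2D)-\dim S^2 = 2N-g+1-(2k-1+\gamma) = 2b+g-\gamma$, it suffices to find that many linearly independent forms in $\Phi_{2D}(S^2)$. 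Each of them should have a representative in $\divi{D_\varphi}$ or in $\divi E$ of weight at most $2b$. Lemma \ref{lem:2bbound} together with Lemmas \ref{lem:weightLD} and \ref{lem:2kbound} then identifies that representative as \emph{the} minimal expression, because any other representative has weight at least $2k > 2b$.

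\textbf{Step 1: separating the parts.} By Corollary \ref{cor:m_dimension}, applied with $w \geq 2b-1$, $\Supp(\varphi)\cap\Phi_D(S)$ has dimension at least $b$. As in the proof of Lemma \ref{lem:fixf}, this forces $\Phi_D(S)\subseteq \divi{D_\varphi}_D$. Consequently the $X_i$ can be chosen in $\divi{D_\varphi}$ rather than only in $\divi D$. Here one has to pass from a statement about forms to one about expressions. I would do this by noting that $\dim \divi{D_\varphi}_D = w$ by Proposition \ref{prop:linind}(i), since $w\le 2b \le N-2g+1$, so there is no kernel, and then lifting $\varphi_i$ to expressions in $\divi{D_\varphi}$. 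Every $\eta \in \Phi_{2D}(S^2)$ is then represented by $X_\eta + Y_\eta$ with $X_\eta\in\divi{D_\varphi}$ and $Y_\eta\in\divi E$. Lemma \ref{lem:dis} applies directly to this form and gives $X_{\eta,2D},\, Y_{\eta,2D} \in \Phi_{2D}(S^2)$.

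\textbf{Step 2: assembling the basis.} Define
$$\mathcal A := \{\eta\in\Phi_{2D}(S^2) : \eta \text{ has a representative in } \divi{D_\varphi}\}$$
and $\mathcal B$ analogously with $\divi E$. Both are subspaces, and Step 1 shows $\Phi_{2D}(S^2) = \mathcal A + \mathcal B$. Choosing a basis of $\mathcal A$, and then extending it by elements of $\mathcal B$ to a basis of $\mathcal A+\mathcal B$, yields the $\eta_i$. Note that the sum need not be direct; Lemma \ref{lem:dimt1} would control $\mathcal A\cap\mathcal B$, but the statement does not require that.

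\textbf{Step 3: minimality.} Each chosen basis element has a representative in $\divi{D_\varphi}$, of weight at most $w\le 2b$, or in $\divi E$. It remains to see that this representative is the minimal expression. For the $\divi{D_\varphi}$ case this follows from Lemma \ref{lem:2kbound}, since the weight is at most $2b < 2k$. For the $\divi E$ case the minimal expression $Z$ has weight at most $2b$, and I would argue that $Z$ must lie in $\divi E$. Suppose not; then $Z$ and the $\divi E$-representative are distinct, so by Lemma \ref{lem:weightLD} their weights sum to at least $2N-2g+2$. This gives $\deg E \geq 2N-2g+2-2b = 2k+2g$, which is compatible with $\deg E = N$ and so yields no contradiction.

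I therefore expect this last point to be the main obstacle. The fallback is to re-choose the $\divi E$-parts. Apply Step 1 to an element of $\mathcal B$: write its minimal expression $Z$ in the form $X_Z+Y_Z$ using \eqref{eq:anhS2}, and use Lemma \ref{lem:dis} to split off the $\divi{D_\varphi}$ part into $\mathcal A$. Replace the element by $Y_{Z,2D}$, and then argue via the weight bound $W(Y_Z) \le W(Z)+W(X_Z)$ and Lemma \ref{lem:weightLD} that $Y_Z$ is itself minimal.
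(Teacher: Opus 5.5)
Your Steps 1 and 2 follow the paper's argument. The decomposition $\Phi_{2D}(S^2)=\mathcal A+\mathcal B$ is a tidier version of the paper's one-at-a-time replacement of $\eta_i$ by $X_{2D}$ or $Y_{2D}$ via Lemma \ref{lem:dis}.

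\textbf{The gap is in Step 3.} You leave the $\divi E$ case open because of a miscomputation. With $N=k+b+g-1$ one has $2N-2g+2-2b=2k$, not $2k+2g$. Moreover $\deg E=\deg D=N=k+b+g-1\le 2k-4$, since $b+g\le\gamma\le k-3$ by Lemma \ref{lem:gammabound}. So your inequality $\deg E\ge 2k$ \emph{is} a contradiction; your value $2k+2g$ would have been one too.

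Put directly: every expression in $\divi{D_\varphi}$ or in $\divi E$ has weight at most $N<2k$. Every form in $\Phi_{2D}(S^2)$ has weight at most $2b$ by Lemma \ref{lem:2bbound}. Lemma \ref{lem:2kbound} with $z=0$ then says any non-minimal representative has weight at least $2k$, so these representatives are the minimal expressions. The paper finishes with exactly this one line, $\deg D_\varphi,\deg E\le N<2k$, which treats both cases uniformly. Relatedly, in your first paragraph the bound you need on the chosen representatives is $<2k$, not $\le 2b$: a $\divi E$-representative can have weight up to $N$, which may exceed $2b$.

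You should drop the fallback, because it is not sound as stated. The inclusion \eqref{eq:anhS2} gives \emph{some} representative $X_Z+Y_Z$ of the form $Z_{2D}$. It does not split the expression $Z$ itself, so the inequality $W(Y_Z)\le W(Z)+W(X_Z)$ has no justification. In any case, showing that $Y_Z$ is minimal is the very question you were trying to answer. With the corrected estimate, your proof is complete without it.
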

	\begin{proof}
		By Corollary \ref{cor:m_dimension} we find linearly independent $\varphi_1, \ldots, \varphi_b \in \divi {D_\varphi}_D \cap \Phi_{D}(S)$. Since the dimension of $\Phi_D(S)$ is $b$ we have 
		$$\Phi_D(S) = \langle \varphi_1, \ldots, \varphi_b \rangle \subseteq \divi {D_\varphi}_D.$$ 
		This implies, that if we define the $X_i$ and $Y'_i$ for $i \in [1, b]$ as we did in Subsection \ref{subs:basis}, then all the $X_i$ will be elements of $\divi {D_\varphi}$. Recall that by \eqref{eq:anhS2} we have
		$$\Phi_{2D}(S^2) \subseteq \langle X_{1, 2D}, \ldots, X_{b, 2D}, Y'_{1, 2D}, \ldots, Y'_{b, 2D} \rangle.$$ 
		We deduce that any form in $\Phi_{2D}(S^2)$ can be written as $X + Y$ with $X \in \divi {D_\varphi}$ and $Y \in \divi E$. 
		
		Suppose $\eta_1, \ldots, \eta_{2b + g - \gamma}$ is a basis for $\Phi_{2D}(S^2)$ and let $i$ be the smallest $i$ such that $\eta_i$ is not in $\divi {D_\varphi}_{2D}$ or $\divi E_{2D}$. From the above we know that $\eta_i$ can be expressed as $X + Y$ with $X \in\divi {D_{\varphi}}$ and $Y \in \divi E$. By Lemma \ref{lem:dis} we find that $X_{2D}, Y_{2D} \in \Phi_{2D}(S^2)$. Because $\eta_i$ is linearly independent from $\eta_1, \ldots, \eta_{i-1}$ at least one of $X_{2D}$ and $Y_{2D}$ must be linearly independent from them as well. Hence, we can replace $\eta_i$ by either $X_{2D}$ or $Y_{2D}$. This can be done repeatedly until all $\eta_1, \ldots, \eta_{2b + g - \gamma}$ either have an expression in $D_\varphi$ or in $E$.
		
		Finally, we note that $$\deg D_\varphi, \deg E \leq N < 2k,$$
		which by Lemma \ref{lem:2kbound} implies the expressions must all be minimal. 
	\end{proof}
	
	\begin{proposition}\label{prop:main}
		Let $\eta_1, \ldots, \eta_{2b + g -\gamma}$ be as in Lemma \ref{lem:disj}. The minimal expressions of the elements $\eta_i$ are all in $\divi {D_{\varphi}}$. Hence, $$\Phi_{2D}(S^2) \subseteq \Phi_{2D}(2D - D_\varphi).$$ 
	\end{proposition}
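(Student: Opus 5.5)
We argue by contradiction, mimicking Freiman's proof that the largest left stable hole lies before the smallest right stable hole. Suppose some $\eta_i$ has minimal expression $Y\in\divi E$. Set $w_Y := W(Y)$, and note that $w_Y\le 2b$ by Lemma \ref{lem:2bbound}. Corollary \ref{cor:m_dimension} applies with $A=D$, since $w_Y\le 2b\le N-2g-2$. It gives at least $w_Y/2$ linearly independent forms in $\divi{D_Y}_D\cap\Phi_D(S)$. On the other hand, $\varphi$ has weight $w\ge 2b-1$, so by the proof of Lemma \ref{lem:disj} all of $\Phi_D(S)$ lies in $\divi{D_\varphi}_D$. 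Consequently at least $\lceil w_Y/2\rceil$ independent forms of $\Phi_D(S)$ admit expressions both in $\divi{D_\varphi}$ and in $\divi{D_Y}\subseteq\divi E$. Since $D\ge D_\varphi$ and $E=(f)_0$, the divisors $D_\varphi$ and $E$ are disjoint. We therefore obtain pairs $X_j\in\divi{D_\varphi}$, $Y_j\in\divi{D_Y}$ with $X_{j,D}=Y_{j,D}$ for $j=1,\ldots,t$, where $t\ge\lceil w_Y/2\rceil$. This is exactly the setting of Lemma \ref{lem:dimt1}, taking $D_X=D_\varphi$ and $(f)_0=E\ge D_Y$ (relabeling the roles so that the hypotheses on $f$ are met, using the translate by $f$).

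Lemma \ref{lem:dimt1} then bounds by $t$ the dimension of the space $\mathcal E$ of expressions in $V=\langle X_j,Y'_j\rangle$ that project into $\Phi_{2D}(S^2)$. The plan is to exhibit more than $t$ independent elements of $\mathcal E$, which gives the contradiction. The candidates come from two sources. The first is the form $\varphi$ together with the further forms in $\Phi_{2D}(S^2)$ supported on $D_\varphi$; by Lemma \ref{lem:dis} these split off cleanly. The second is $Y$ itself, which after translation lies in the span of the $Y'_j$. I would try to show that $\varphi$ and $Y$ lie in $V$ and remain independent there. For $\varphi$, its minimal expression lies in $\divi{D_\varphi}$ and, by \eqref{eq:anhS2}, in the span of the $X_i$. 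Since $w\ge 2b-1$, the $X_j$ obtained above should span all the relevant $X_i$, provided $t$ is close to $b$.

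The counting is the crux. If $w_Y$ is large, say close to $2b$, then $t$ is close to $b$ and the set $\{X_j\}$ is essentially all of $\Phi_D(S)$. In that case $\dim\Phi_{2D}(S^2)\cap V$ should be at least the number of basis elements $\eta_i$ of Lemma \ref{lem:disj} living on $D_\varphi$ or on $D_Y$. I would need to show that this number exceeds $t$, using $\dim\Phi_{2D}(S^2)=2b+g-\gamma$ and the hypothesis $2b\le k+2$. If $w_Y$ is small, I would instead argue that $Y$ combined with $\varphi$ creates, by Proposition \ref{prop:m_dimension} applied to the sum $X_\varphi+Y$ (of weight $w+w_Y$), too many independent annihilators of $S$. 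The bound $\codim\le (w+w_Y)/2$ must be compared with the intersection count in Lemma \ref{lem:dimt1}, in the same way that Freiman compares $e$ and $c$.

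The main obstacle I expect is this last quantitative step: showing that the number of independent elements of $\mathcal E$ strictly exceeds $t$ in every case. That requires balancing $w\in\{2b-1,2b\}$, $w_Y$, $\gamma\ge b+g$ (Lemma \ref{lem:gammabound}) and $2b\le k+2$. Once the contradiction is reached, every $\eta_i$ has minimal expression in $\divi{D_\varphi}$. Then $\Phi_{2D}(S^2)\subseteq\divi{D_\varphi}_{2D}=\Phi_{2D}(L(2D-D_\varphi))$ by Lemma \ref{lem:rranh}, which gives the stated inclusion.
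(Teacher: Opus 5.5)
\textbf{There is a genuine gap: the case you call the crux is left open, and the mechanism you propose for it cannot work.}

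\emph{What works.} Your setup is right, and your small-$w_Y$ step is sound. Since $w+w_Y\ge 2b+1$ by Corollary \ref{cor:weight2}, apply Corollary \ref{cor:m_dimension} with $A=D$ to $X_\varphi+Y$. This produces more than $b$ independent forms in $\Phi_D(S)$ whenever $w+w_Y\le N+2$. In range (ii) the bound is $(N-2g)/2$, which exceeds $b$ because $b\le k-3-g$.

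\emph{Why your plan fails in the remaining case.} Lemma \ref{lem:dimt1} applied to the $t$ shared pairs only gives $\dim\mathcal E\le t$. Every expression in $V$ is supported on $D_\varphi+D_Y$, of degree at most $4b\le 2N-2g+1$. So by Proposition \ref{prop:linind} the map $Z\mapsto Z_{2D}$ is injective on $V$, and $\dim\mathcal E\le\dim\Phi_{2D}(S^2)=2b+g-\gamma\le b$ by Lemma \ref{lem:gammabound}. Since $t\ge w_Y/2$, once $w_Y\ge 2(2b+g-\gamma)$ the conclusion of Lemma \ref{lem:dimt1} holds trivially and carries no information. This happens for example when $w_Y\in\{2b-1,2b\}$, where $t=b$. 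In that regime you cannot exhibit more than $t$ elements of $\mathcal E$. There are two further problems:
\begin{itemize}
\item $\varphi$ need not lie in $V_{2D}$ unless the shared part is all of $\Phi_D(S)$.
\item The hypothesis $2b\le k+2$ plays no role in this proposition. It is used only for the weight $\le 2b-2$ case, via Proposition \ref{prop:zbound}. That you expect to need it suggests your counting targets the wrong inequality.
\end{itemize}

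\emph{The missing idea.} The shared part must be controlled by a minimality argument, and Lemma \ref{lem:dimt1} must be applied to its \emph{complement}. The paper proceeds as follows.
\begin{itemize}
\item Choose $\chi\in\Phi_{2D}(S^2)$ with minimal expression $X$ on $D_\varphi$ and $\psi\in\Phi_{2D}(S^2)$ with minimal expression $Y$ on $E$, of weights $w_0$ and $w_\infty$. Require $w_0+w_\infty>2b$ and take this sum minimal.
\item Let $x$ be the dimension of the subspace of $\Phi_D(S)$ represented both on $\Supp(X)$ and on $\Supp(Y)$. Corollary \ref{cor:m_dimension} gives $w_0+w_\infty\le 2b+2x$.
\item Order a basis of the shared part by strictly decreasing weight on the $D_\varphi$ side. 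Lemma \ref{lem:weightLD} applied to the last pair gives $x\le w_0+w_\infty-N-1+2g$.
\item Minimality of $w_0+w_\infty$ shows that at most a line on each side of the shared part projects into $\Phi_{2D}(S^2)$. By Lemma \ref{lem:dis}, the shared part therefore accounts for at most $2$ dimensions of $\Phi_{2D}(S^2)$.
\item The remaining $b-x$ pairs, represented on $D$ and on $E$, must then carry at least $2b+g-\gamma-2$ dimensions of $\Phi_{2D}(S^2)$. Lemma \ref{lem:dimt1} applied to \emph{these} pairs gives $x\le\gamma-b-g+2$.
\item Substituting both upper bounds on $x$ into $w_0+w_\infty\le 2b+2x$ yields $k\le\gamma+2$, contradicting $k\ge\gamma+3$.
\end{itemize}
Without the minimal pair and the bound of $2$ on the shared part, your plan has no inequality strong enough to close the large-$w_Y$ case.
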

	\begin{proof}
		Suppose the statement is not true, then by Lemma \ref{lem:disj} there exist $\psi \in \Phi_{2D}(S^2)$ with minimal expression in $\divi E$. Because by Corollary \ref{cor:weight2} it holds that $w_{2D}(\psi) \geq 2$ for all $\psi$ we have
		$$w_{2D}(\psi) + w > 2b.$$ 
		Hence, we can choose $\chi \in \Phi_{2D}(S^2)\cap \divi{D_\varphi}_{2D}$ of weight $w_0$ and $\psi \in \Phi_{2D}(S^2) \cap \divi E$ of weight $w_\infty$ such that 
		\begin{equation*}
			w_0 + w_\infty > 2b. \label{eqn:w0_winf}
		\end{equation*}
		Fix such $\chi$ and $\psi$ such that the value $w_0 + w_\infty$ is minimal. We write $X$ and $Y$ for the minimal expressions of $\chi$ and $\psi$, respectively. 
		
		Corollary \ref{cor:m_dimension} tells us that for some $c \geq w_{0}/2$ we have linearly independent forms 
		$$\chi_1, \ldots, \chi_c \in \Phi_D(S) \cap \Supp(X)_D,$$
		and for some $e \geq w_\infty/2$ we have linearly independent forms $$\psi_1, \ldots, \psi_e \in \Phi_D(S) \cap \Supp(Y)_D.$$ 
		Define $x$ such that
		\begin{equation}
			\dim \left(\langle \chi_1, \ldots, \chi_c, \psi_1, \ldots, \psi_e \rangle \right) = c + e - x \label{eqn:chi_psi}.
		\end{equation}
		Then $x$ corresponds to the dimension of the following $K$-subspace
		\begin{equation}
			\langle \chi_1,\ldots,\chi_c\rangle
			\cap
			\langle \psi_1,\ldots,\psi_e\rangle = \left\{ \eta \in L(D)^\ast \mid \eta = \sum_{i=1}^c \lambda_i \chi_i = \sum_{i=1}^e \mu_i \psi_i \text{ for } \lambda_i, \mu_i \in K \right\}.\label{eqn:dim_x}
		\end{equation}
		Since all the forms in \eqref{eqn:chi_psi} lie in the $b$ dimensional space $\Phi_D(S)$, we have $c + e - x \leq b$. We obtain that
		\begin{equation}\label{EQ:bound0}
			w_0 + w_\infty \leq 2c + 2e \leq 2b + 2x.
		\end{equation}
		We now prove two upper bounds on $x$. 
		
		Let $\eta^{(1)}, \ldots, \eta^{(x)}$ be a basis of the space given in \eqref{eqn:dim_x}. Define $X^{(j)}$ and $Y^{(j)}$ as the expressions of $\eta^{(j)}$ in $\divi {D_\varphi}$ and $\divi {E}$, respectively. By taking linear combinations we can assume that
		$$w_0 \geq W(X^{(1)}) > \ldots > W(X^{(x)}).$$ 
		Thus, $W(X^{(x)}) \leq w_0 - x + 1$ and trivially $W(Y^{(x)}) \leq w_\infty$. Furthermore, since 
		$$\left(X^{(x)} - Y^{(x)}\right)_D = 0,$$ 
		Lemma \ref{lem:weightLD} implies that 
		$$W\left(X^{(x)}\right) + W\left(Y^{(x)}\right) \geq N - 2g + 2.$$ 
		Combining this gives
		$$x \leq w_0 + w_\infty - N - 1 + 2g.$$ 
		This is our first upper bound on $x$.
		
		For the second bound, we first show that up to scaling there is at most one expression $Z \in \langle X^{(1)}, \ldots, X^{(x)} \rangle$ that project to a linear form in $\Phi_{2D}(S^2)$. Assume that we have such a non-zero expression $Z$ with $Z_{2D} \in \Phi_{2D}(S^2)$. After replacing the basis $\eta^{(1)}, \ldots, \eta^{(x)}$ if necessary, we may assume that $Z = X^{(j)}$ for some $j$. Note that $Z_{2D} + \psi\in \Phi_{2D}(S^2)$  and recall that $Y_{2D} = \psi$ and $w_\infty \geq W(Y^{(j)})$ for any $j$. Hence,
		$$W(Z) + w_\infty = W(Z+Y) \geq W\left( X^{(j)} + Y^{(j)} \right) \geq N - 2g + 2,$$
		where we used Lemma \ref{lem:weightLD} to estimate the weight of $X^{(j)} + Y^{(j)}$. By Lemma \ref{lem:gammabound} we know that $N-2g+2 = k - g + b + 1 > 2b$. We find that
		$$W(Z) + w_\infty > 2b.$$ 
		Because $w_\infty + w_0$ is minimal, we must have $W(Z) = w_0$ and thus that $Z = X^{(1)}$. Hence,
		$$\dim\left( \langle X^{(1)}_{2D}, \ldots, X^{(x)}_{2D} \rangle \cap \Phi_{2D}(S^2) \right) \leq 1.$$ 
		Analogously we have that 
		$$\dim\left( \langle Y'^{(1)}_{2D}, \ldots, Y'^{(x)}_{2D} \rangle \cap \Phi_{2D}(S^2) \right) \leq 1,$$ 
		where $Y'^{(j)}$ is the translate of $Y^{(j)}$ by $f$. 
		
		Next, we consider an element $\eta$ of 
		$$\Phi_1 := \langle X^{(1)}_{2D}, \ldots, X^{(x)}_{2D}, Y'^{(1)}_{2D}, \ldots, Y'^{(x)}_{2D} \rangle \cap \Phi_{2D}(S^2).$$
		We can write $\eta = \eta_X + \eta_Y$ with $\eta_X \in \langle X^{(1)}_{2D}, \ldots, X^{(x)}_{2D} \rangle$ and $\eta_Y \in \langle Y'^{(1)}_{2D}, \ldots, Y'^{(x)}_{2D} \rangle.$ From Lemma \ref{lem:dis} it follows that both $\eta_X$ and $\eta_Y$ are in $\Phi_{2D}(S^2)$. Therefore, we can add the dimensions of the two spaces to find
		$$\dim \left( \Phi_1 \right) \leq 2.$$ 
		
		Recall that the $X^{(i)}$ represent $\eta^{(i)} \in \Phi_D(S)$. We extend $\eta^{(1)}, \ldots, \eta^{(x)}$ to a basis of $\Phi_D(S)$ by defining $\eta^{(x+1)}, \ldots, \eta^{(b)}$ with two sets of representatives: $X^{(x+1)}, \ldots, X^{(b)} \in \divi {D}$ and $Y^{(x+1)}, \ldots, Y^{(b)} \in \divi {E}$. Let $Y'^{(j)}$ be the translates of $Y^{(j)}$ by $f$. Then we can choose $\eta^{(x+1)}, \ldots, \eta^{(b)}$ such that 
		$$\Phi_{2D}(S^2) = \Phi_1 \oplus \langle X_{x+1,2D}, \ldots, X_{b,2D}, Y'_{x+1,2D}, \ldots, Y'_{b,2D} \rangle \cap \Phi_{2D}(S^2).$$ 
		Since the dimension of $\Phi_{2D}(S^2)$ equals $2b + g - \gamma$ we deduce that
		$$\dim \left(\langle X^{(x+1)}_{2D}, \ldots, X^{(b)}_{2D}, Y'^{(x+1)}_{2D}, \ldots, Y'^{(b)}_{2D} \rangle \cap \Phi_{2D}(S^2) \right) \geq 2b + g - \gamma - 2.$$ 
		By Lemma \ref{lem:dimt1} we also have an upper bound on this dimension of $b-x$. Hence,
		$$x \leq \gamma - b - g + 2.$$ 
		This is our second upper bound for $x$. 
		
		To conclude the proof we combine the two bounds with \eqref{EQ:bound0}. Plugging in both upper bounds for $x$ once we find
		$$w_0 + w_\infty \leq 2b + w_0 + w_\infty - N - 1 + 2g + \gamma - b - g + 2.$$ 
		Hence, 
		$$k = N+1 - b - g \leq \gamma + 2,$$
		which contradicts $k \geq \gamma + 3$. This proves the statement.
	\end{proof}

	\printbibliography
\end{document}